\documentclass[a4paper, twoside, 11pt]{article}
\usepackage{titlesec, longtable}

\titleclass{\subsubsubsection}{straight}[\subsection]
\newcounter{subsubsubsection}[subsubsection]

\renewcommand\thesubsubsubsection{\thesubsubsection.\arabic{subsubsubsection}}

\titleformat{\subsubsubsection}
 {\normalfont\normalsize\bfseries}{\thesubsubsubsection}{1em}{}
\titlespacing*{\subsubsubsection}
{0pt}{3.25ex plus 1ex minus .2ex}{1.5ex plus .2ex}

\makeatletter
\renewcommand\paragraph{\@startsection{paragraph}{5}{\z@}%
 {3.25ex \@plus1ex \@minus.2ex}%
 {-1em}%
 {\normalfont\normalsize\bfseries}}
\renewcommand\subparagraph{\@startsection{subparagraph}{6}{\z@}
 {3.25ex \@plus1ex \@minus .2ex}%
 {-1em}%
 {\normalfont\normalsize\bfseries}}
\def\toclevel@subsubsubsection{4}
\def\toclevel@paragraph{5}
\def\toclevel@paragraph{6}
\def\l@subsubsubsection{\@dottedtocline{4}{7em}{4em}}
\def\l@paragraph{\@dottedtocline{5}{10em}{5em}}
\def\l@subparagraph{\@dottedtocline{6}{14em}{6em}}
\@addtoreset{subsubsubsection}{section}
\@addtoreset{subsubsubsection}{subsection}
\@addtoreset{paragraph}{subsubsubsection}
\makeatother

\setcounter{secnumdepth}{6}
\setcounter{tocdepth}{6}

\usepackage{a4wide, fancyhdr, amsmath, amssymb, mathtools, yfonts, bbm}
\usepackage[hidelinks]{hyperref}
\usepackage{mathrsfs}
\usepackage{graphicx}
\usepackage{tikz}
\usepackage[all]{xy}
\usepackage[utf8]{inputenc}
\usepackage{amsthm}
\usepackage[english]{babel}
\usepackage{chngcntr}
\usepackage{ifthen}
\usepackage{calc}


%

\usepackage{authblk}



\setlength\headheight{20pt}
\addtolength\topmargin{-10pt}
\addtolength\footskip{20pt}

\newcommand{\Z}{\mathbb{Z}}
\newcommand{\Q}{\mathbb{Q}}

\newcommand\GL{\mathrm{GL}(2,\mathbb{Z})}

\numberwithin{equation}{section}
\newtheorem{lemma}{Lemma}[section]
\newtheorem{theorem}[lemma]{Theorem}
\newtheorem{prop}[lemma]{Proposition}
\newtheorem{step}[lemma]{Step}

\newtheorem{corollary}[lemma]{Corollary}
\newtheorem{mydef}[lemma]{Definition}

\newtheorem{remark}[lemma]{Remark}
\newtheorem{example}[lemma]{Example}

\newtheorem{definition}[lemma]{Definition}

\title{\vspace{-\baselineskip}\sffamily\bfseries Binary forms with the same value set III. The case of ${\bf D}_3$ and ${\bf D_6}$}
\author[1]{\'Etienne Fouvry\thanks{CNRS, Laboratoire de math\' ematiques d'Orsay, Universit\' e Paris--Saclay, 91405 Orsay, France, etienne.fouvry@universite-paris-saclay.fr}}
\author[2]{Peter Koymans\thanks{Institute for Theoretical Studies, ETH Zurich, 8092 Zurich, Switzerland, peter.koymans@eth-its.ethz.ch}}
\affil[1]{Universit\'e Paris--Saclay}
\affil[2]{ETH Zurich}
\date{\today}

\begin{document}
\maketitle
 
\begin{abstract} 
Let $F, G \in \Z[X, Y]$ be binary forms of degree $\geq 3$ with automorphism groups isomorphic to the dihedral group of cardinality $6$ or $12$. We characterize exactly when $F$ and $G$ have the same value set, that is when $F$ and $G$ satisfy $F(\Z^2) = G(\Z^2)$. 
\end{abstract}

\section{Introduction}
We continue the study initiated in \cite{FK1} and \cite{FKD4} to characterize pairs $(F, G)$ of binary forms in $\Z[X, Y]$ of degree $d\geq 3$, with integral coefficients, such that $F(\Z^2) = G(\Z^2)$, but which are not $\GL$--equivalent. To be more precise, let $d \geq 3$ be a fixed integer and let ${\rm Bin}(d, \Q)$ be the set of binary forms with rational coefficients and non-zero discriminant. If the matrix 
$$
\gamma :=
\begin{pmatrix}
 a &b\\ c& d
\end{pmatrix}
$$
belongs to ${\rm GL}(2, \Q)$ and if $F=F(X, Y)$ is an element of ${\rm Bin}(d, \Q)$, we denote by $F \circ \gamma$ the form $(F \circ \gamma)(X, Y) := F(aX+bY, cX+dY)$. We say that the forms $F$ and $G$ are {\it $\GL$--equivalent} when there exists $\gamma \in \GL$ such that $F \circ \gamma = G$, and we use the notation $F \sim_{\GL} G$ in that case. Of course, we have the implication
$$
F\sim_{\GL} G \Longrightarrow F \sim_{\rm val} G,
$$
where, by definition, $F \sim_{\rm val} G$ if and only if $F(\Z^2) = G(\Z^2)$. Here we defined $F(\Z^2) = \{F(x, y) : (x, y) \in \Z^2\}$. 

We are interested in the existence and in the characterization of pairs of {\it extraordinary forms} $(F, G)$, which by definition are two forms $F$ and $G$ satisfying the two properties
$$
F(\Z^2) = G(\Z^2) \text{ and } F \not \sim_{\GL} G.
$$
In this case we say that $F$ and $G$ are \emph{linked}. The characterization of an extraordinary form $F$ deeply depends on the group of automorphisms of $F$ defined by 
$$
{\rm Aut}(F, \Q) := \{ \gamma \in {\rm GL}(2, \Q) : F \circ \gamma = F\}.
$$
Up to ${\rm GL}(2, \Q)$--conjugation, there are ten possibilities for ${\rm Aut}(F, \Q)$. With the notations of \cite[Table 1]{SX}, these ten subgroups of ${\rm GL}(2, \Q)$ are
$$
{\bf C}_1, \, {\bf C}_2,\, {\bf C}_3,\, {\bf C}_4,\, {\bf C}_6,\, {\bf D}_1, \, {\bf D}_2,\, {\bf D}_3,\, {\bf D}_4,\, {\bf D}_6.
$$
The case of the first seven subgroups was treated in \cite{FK1} and the case of ${\bf D}_4$ was solved in \cite{FKD4}. In this paper, we finish the study of value sets by investigating the case of ${\bf D}_3$ and ${\bf D}_6$. These subgroups have cardinalities $6$ and $12$ and are generated by two matrices. Setting
$$
S := \begin{pmatrix} 0 & 1 \\ 1 & 0 \end{pmatrix} \text{ and } R := \begin{pmatrix} 0 & 1 \\ -1 & -1 \end{pmatrix}
$$
we have ${\bf D}_3 = \langle S, R \rangle$ and ${\bf D}_6 = \langle S, -R \rangle$. Explicitly, we have 
\begin{equation}
\label{D3=...}
{\bf D}_3 = \{{\rm id}, \, R, \, R^2, \, S, \, SR, \, SR^2\},
\end{equation}
and 
$$
{\bf D}_6 = {\bf D}_3 \cup (-{\bf D}_3).
$$
For instance, the cubic form 
\begin{equation}
\label{XY(X+Y)}
F_0 (X, Y) := XY(X+Y)
\end{equation}
satisfies the equality ${\rm Aut}(F_0, \Q) = {\bf D}_3$. The sextic form
\begin{equation}
\label{sextic} 
F_{a, c} := aX^6 - 3aX^5Y + cX^4Y^2 + (5a - 2c)X^3Y^3 + cX^2Y^4 - 3aXY^5 + aY^6,
\end{equation}
where $a$ and $c$ are integers such that ${\rm disc} \, F_{a,c} \neq 0$, is stable by ${\bf D}_6$ (see \cite[p. 818]{S} for more details). To follow the notation introduced in \cite[\S 1.1]{FK1}, we write $(C3)$ for the condition
$$
\leqno{(C3)} :\ \ {\rm Aut}(F, \Q)\text{ is } {\rm GL}(2, \Q)\text{--conjugate with } {\bf D}_3 \text{ or } {\bf D}_6.
$$ 
The following statement gives an efficient way to detect extraordinary forms. 

\begin{corollary}
\label{illustration1bis} 
Let $d \geq 3$. Suppose that $F \in {\rm Bin}(d, \Q)$ satisfies the condition $(C3)$. Then $F$ is extraordinary if and only if ${\rm Aut}(F, \Q)$ contains an element 
$$
\sigma =\begin{pmatrix}
a &b \\ c& d
\end{pmatrix}
$$
with the following properties
\begin{enumerate}
\item the order of $\sigma$ is equal to $3$,
\item the quadruple $(a, b, c, d)$ of rational numbers satisfies one of the four properties
\begin{enumerate}
\item $(a, b, c, d) \in \Z^4$,
\item $(a, d) \in \Z^2$ and $(b, c) \in \Z \times \left(\Z + \frac{1}{2}\right)$,
\item $(a, d) \in \Z^2$ and $(b, c) \in \left(\Z + \frac{1}{2}\right) \times \Z$,
\item $(a, b, c, d) \in \left(\Z + \frac{1}{2}\right)^4$.
\end{enumerate}
\end{enumerate}
\end{corollary}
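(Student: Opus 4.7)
The plan is to deduce this corollary from the paper's principal characterization of extraordinary forms satisfying $(C3)$, which should be established earlier in the main body. The proof proceeds in four conceptual steps.

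First, I record the algebraic constraint on any order-$3$ element of $\mathrm{GL}(2, \mathbb{Q})$: its minimal polynomial divides $X^3 - 1$ but is not $X - 1$, hence equals $X^2 + X + 1$. This forces $\mathrm{tr}(\sigma) = -1$ and $\det(\sigma) = 1$, i.e.\ $a + d = -1$ and $ad - bc = 1$. By the presentation \eqref{D3=...} and the definition of $\mathbf{D}_6$, both dihedral groups contain a unique cyclic subgroup of order $3$, generated by $R$, with its only other non-identity element being $R^2$. Since $\mathrm{Aut}(F,\mathbb{Q})$ is $\mathrm{GL}(2,\mathbb{Q})$-conjugate to one of them, any witness $\sigma$ for the corollary is the generator or its inverse of the unique order-$3$ subgroup of $\mathrm{Aut}(F,\mathbb{Q})$.

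Second, I invoke the main theorem of the paper to translate "extraordinary" into the existence of an arithmetic obstruction in $\mathrm{Aut}(F,\mathbb{Q})$: concretely, a matrix in $\mathrm{Aut}(F,\mathbb{Q}) \cap \tfrac{1}{2} M_2(\mathbb{Z})$ producing a genuinely new integral form under composition and not accounted for by a $\mathrm{GL}(2,\mathbb{Z})$-change of basis. The key reduction is that the order-$2$ elements of $\mathbf{D}_3$ and $\mathbf{D}_6$ (the reflections $S, SR, SR^2$ together with $-I$ and its products with $R, R^2$ in the $\mathbf{D}_6$ case) can always be normalized to lie in $\mathrm{GL}(2, \mathbb{Z})$ by a suitable integral change of coordinates, so the genuine obstruction must reside on the order-$3$ rotation. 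This reduces the corollary to checking that $\sigma \in \tfrac{1}{2} M_2(\mathbb{Z})$, together with an enumeration of the admissible entry patterns.

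Third, I enumerate those patterns from the constraints $a + d = -1$, $ad - bc = 1$, and $\sigma \in \tfrac{1}{2} M_2(\mathbb{Z})$. The trace condition forces $a$ and $d$ to have equal fractional parts in $\tfrac{1}{2}\mathbb{Z}/\mathbb{Z}$. If both lie in $\mathbb{Z}$, then $bc = ad - 1 \in \mathbb{Z}$; since two half-integers have product in $\mathbb{Z} + \tfrac{1}{4}$, at least one of $b, c$ must be integral, yielding exactly the three cases (a), (b), (c). If instead $a, d \in \mathbb{Z} + \tfrac{1}{2}$, a direct calculation using $a + d = -1$ (so the integer parts of $a, d$ sum to $-2$) gives $ad \in \mathbb{Z} + \tfrac{1}{4}$, hence $bc \in \mathbb{Z} + \tfrac{1}{4}$, and this in turn forces $b, c \in \mathbb{Z} + \tfrac{1}{2}$: this is case (d).

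The main obstacle I anticipate is step two: verifying that the arithmetic obstruction to $F$ being ordinary genuinely concentrates on the order-$3$ generator rather than being distributed across the entire dihedral group. One must show that every $\mathrm{GL}(2, \mathbb{Q})$-conjugate of $\mathbf{D}_3$ (respectively $\mathbf{D}_6$) can be normalized by a $\mathrm{GL}(2, \mathbb{Z})$-change of basis so that the reflections and $\pm I$ become integral. This normalization is plausible because in a dihedral group any single reflection together with the rotation already generates the whole group, so a standard orbit-of-lattices argument should suffice to arrange one reflection in $\mathrm{GL}(2, \mathbb{Z})$, leaving the rotation (and hence $\sigma$) as the sole carrier of fractional denominators.
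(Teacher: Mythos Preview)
Your step 1 and step 3 are correct and useful: the trace/determinant constraints on an order-$3$ element, together with the half-integer enumeration, do characterize patterns (a)--(d) as exactly the order-$3$ matrices in $\tfrac{1}{2}M_2(\Z)$.

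However, step 4 contains a false claim, and this undermines step 2. You assert that every $\mathrm{GL}(2,\Q)$-conjugate of $\mathbf{D}_3$ (or $\mathbf{D}_6$) can be normalized by a $\mathrm{GL}(2,\Z)$-change so that the reflections become integral. This is wrong: conjugation by $h\in\mathrm{GL}(2,\Z)$ preserves integrality, so a non-integral reflection stays non-integral under any such change. Concretely, take $T=(3I+R)T'$ with $T'\in\mathrm{GL}(2,\Z)$; then $T^{-1}RT\in\mathrm{GL}(2,\Z)$ (so pattern (a) holds for the rotation), yet a short computation in $\Q(\omega)\cong\Q[R]$ shows that $T^{-1}ST$ has denominator $7$ and hence cannot be made integral by any $\mathrm{GL}(2,\Z)$-conjugation. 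So the reflections are \emph{not} the right place to look, and your proposed mechanism for concentrating the obstruction on the order-$3$ element does not work.

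The paper's route is different and avoids reflections entirely. One first deduces Corollary~\ref{illustration2bis} from Theorem~\ref{sourcebis}: $F$ is extraordinary if and only if there exists $G\in[F]_{\mathrm{val}}$ with $\{\sigma\in\mathrm{Aut}(G,\Q):\sigma^3=\mathrm{id}\}\subseteq\mathrm{GL}(2,\Z)$, and moreover $[F]_{\mathrm{val}}=[G]_{\mathrm{GL}(2,\Z)}\cup[G^\dag]_{\mathrm{GL}(2,\Z)}$. Then Corollary~\ref{illustration1bis} follows exactly as in \cite[\S 3.2]{FK1}: one computes that the order-$3$ elements of $\mathrm{Aut}(G^\dag,\Q)=\gamma^{-1}\mathrm{Aut}(G,\Q)\gamma$ (with $\gamma=\mathrm{diag}(2,1)$) lie in $\tfrac12 M_2(\Z)$, and $\mathrm{GL}(2,\Z)$-conjugation preserves $\tfrac12 M_2(\Z)$, giving the forward direction. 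For the converse, the argument produces from $\sigma\in\tfrac12 M_2(\Z)$ of order $3$ an $h\in\mathrm{GL}(2,\Z)$ such that either $h^{-1}\sigma h$ or $\gamma h^{-1}\sigma h\gamma^{-1}$ is integral, whence the required $G$ exists. Your proposal does not supply this reduction; normalizing reflections is neither possible in general nor needed.
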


As announced in \cite[Theorem B]{FK1}, this corollary is the analogue of \cite[Corollary 1.2]{FK1}, but its proof below will contain more difficulties to solve. This is due to the complexity of the groups of automorphisms of $F$ which now are ${\bf D}_3$ and ${\bf D}_6$. This new situation leads to more intricate combinatorial problems of covering $\Z^2$ by lattices (see \S \ref{catalog}, particularly Lemmas \ref{length5} and \ref{length6}). We solve these challenges by developing new algorithms to enumerate coverings of $\Z^2$, see Appendix \ref{a1} and \ref{a2}.

\begin{example}
From Corollary \ref{illustration1bis}, we deduce that the forms $F_0$ and $F_{a, c}$ defined in \eqref{XY(X+Y)} and \eqref{sextic} are extraordinary. Now consider the form
$$
F(X, Y) := 9 \cdot \left(F_0 \circ T \right)(X, Y) = XY(X+3Y)
$$
with $T:= \begin{pmatrix} 1/3 & 0 \\ 0 & 1 \end{pmatrix}$. By the conjugation formula, we have the equality
\begin{equation}
\label{conjugation}
{\rm Aut}(F, \Q) = T^{-1}{\rm Aut}(F_0, \Q) T.
\end{equation}
By the description \eqref{D3=...} the only elements $\sigma\in {\rm Aut}(F, \Q)$ with order $3$ are $\sigma = T^{-1} RT$ and $\sigma = T^{-1} R^2 T$. The corresponding matrices have one entry with its denominator equal to $3$. So $\sigma$ has not the shape required by Corollary \ref{illustration1bis} and the form $F$ is not extraordinary.
\end{example}

However, Corollary \ref{illustration1bis} gives no information on the extraordinary form $G$ linked with $F$. This drawback is fixed in the following statements, which were announced in \cite[Theorem B]{FK1}. We designate by $[F]_{\GL}$ and $[F]_{\rm val}$ the equivalence classes of the form $F$ relatively to $\sim_{\GL}$ and $\sim_{\rm val}$. Of particular importance will be the form $F^\dag$ attached to $F$ defined by
$$
F^\dag(X, Y) := F(2X, Y).
$$

\begin{corollary}
\label{illustration2bis} 
Let $d \geq 3$. A form $F \in {\rm Bin}(d, \Z)$ satisfying $(C3)$ is extraordinary if and only if there exists $G \in [F]_{\rm val}$ such that
$$
\{\sigma \in {\rm Aut}(G, \Q) : \sigma^3 = {\rm id}\} \subseteq {\rm GL}(2, \Z) \textup{ and } 3 \mid \vert {\rm Aut}(G, \Q)\vert. 
$$
Furthermore, we have a decomposition into two disjoint $\sim_{\GL}$ classes
$$
[F]_{\rm val} = [G]_{{\rm GL}(2, \Z)} \cup [G^\dag]_{{\rm GL}(2, \Z)}.
$$
\end{corollary}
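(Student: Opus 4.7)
The plan is to combine Corollary \ref{illustration1bis} with the covering-by-lattices lemmas of Section \ref{catalog} (notably Lemmas \ref{length5} and \ref{length6}). The easy direction of the characterization is immediate: if $G \in [F]_{\rm val}$ satisfies $3 \mid |\Aut(G, \Q)|$ and has its order-$3$ automorphisms inside $\GL(2, \Z)$, then by Cauchy's theorem $\Aut(G, \Q)$ contains an order-$3$ element, which by hypothesis has shape (a) of Corollary \ref{illustration1bis}; thus $G$ is extraordinary, and so is $F$ since extraordinariness depends only on the val-class.

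For the converse, Corollary \ref{illustration1bis} furnishes an order-$3$ element $\sigma = \bigl(\begin{smallmatrix} a & b \\ c & d \end{smallmatrix}\bigr) \in \Aut(F, \Q)$ of one of the four shapes. The minimal polynomial $x^2 + x + 1$ forces $a + d = -1$ and $ad - bc = 1$, so the half-integer entries in cases (b)--(d) come paired with integrality constraints; for instance in case (b), $c \in \Z + \tfrac{1}{2}$ together with $bc \in \Z$ forces $b \in 2\Z$. In each of (b), (c), (d) this lets me pick an explicit $T \in \GL(2, \Q)$ with $|\det T| = 2$ such that $T^{-1} \sigma T \in \GL(2, \Z)$; in case (b), $T = \bigl(\begin{smallmatrix} 2 & 0 \\ 0 & 1 \end{smallmatrix}\bigr)$ works, and the resulting form $G := F \circ T$ is exactly $F^\dag$. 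In case (a) I take $G = F$. In every case $G \in \Z[X, Y]$ and $T^{-1} \sigma T \in \Aut(G, \Q) \cap \GL(2, \Z)$. The crucial step is to verify $G \in [F]_{\rm val}$, i.e.\ $F(\Z^2) = F(T \cdot \Z^2)$: every $\Aut(F, \Q)$-orbit in $\Z^2$ must meet the index-$2$ sublattice $T \cdot \Z^2$. This is exactly what the covering lemmas deliver.

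For the decomposition, the inclusion $[G]_{\GL(2, \Z)} \cup [G^\dag]_{\GL(2, \Z)} \subseteq [F]_{\rm val}$ requires $G \sim_{\rm val} G^\dag$, which is a second application of the covering lemmas (now with the fixed $T_0 = \bigl(\begin{smallmatrix} 2 & 0 \\ 0 & 1 \end{smallmatrix}\bigr)$ and $G$ in place of $F$). The two classes are distinct: a $\GL(2, \Z)$-equivalence $G^\dag = G \circ \gamma$ would give $T_0 \gamma^{-1} \in \Aut(G, \Q)$, contradicting $|\det T_0| = 2 \neq 1$ for elements of a finite subgroup of $\GL(2, \Q)$. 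For the reverse inclusion, any $H \in [F]_{\rm val}$ still satisfies $(C3)$ (since $\Aut(\cdot, \Q)$ up to $\GL(2, \Q)$-conjugacy is a val-class invariant established in \cite{FK1}) and is still extraordinary, so applying the forward direction to $H$ exhibits inside $[H]_{\rm val} = [F]_{\rm val}$ a form that is $\GL(2, \Z)$-equivalent to $G$ or $G^\dag$, and a short further argument places $H$ itself into $[G]_{\GL(2,\Z)}$ or $[G^\dag]_{\GL(2,\Z)}$. The main obstacle is the covering equality $F(\Z^2) = F(T \cdot \Z^2)$ that underlies both the construction of $G$ in cases (b)--(d) and the val-equivalence $G \sim_{\rm val} G^\dag$: this is the combinatorial core of the paper, posed by the enlarged groups ${\bf D}_3$ and ${\bf D}_6$ and resolved by the lattice-covering enumeration of Appendices \ref{a1}--\ref{a2}.
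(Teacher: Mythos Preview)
Your plan has two genuine gaps. First, in the paper's logical order Corollary~\ref{illustration1bis} is \emph{derived from} Corollary~\ref{illustration2bis} (see \S\ref{Fromto}), so invoking Corollary~\ref{illustration1bis} to prove Corollary~\ref{illustration2bis} is circular. More seriously, even if you grant Corollary~\ref{illustration1bis} as an independent input, your argument for the reverse inclusion $[F]_{\rm val}\subseteq [G]_{\GL}\cup[G^\dag]_{\GL}$ does not work: applying the forward direction to an arbitrary $H\in[F]_{\rm val}$ produces \emph{some} $G'\in[F]_{\rm val}$ with integral order-$3$ automorphisms, but gives no reason why $G'\sim_{\GL}G$ or $G'\sim_{\GL}G^\dag$, nor why $H$ itself lands in one of those two classes. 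A priori the val-class could contain three or more $\GL$-classes, several of which satisfy the integrality condition. Ruling this out is precisely the content of Theorem~\ref{sourcebis}, which says that \emph{any} pair of linked extraordinary forms is, up to $\GL$-equivalence, related by $\dag$; this is how the paper obtains the two-class decomposition, and your ``short further argument'' is in fact the bulk of the paper.

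A secondary point: you misidentify the role of the covering lemmas of \S\ref{catalog}. Those lemmas \emph{classify} minimal coverings and are used as obstructions (to eliminate bad values of $(D,\nu,d_2)$); they do not verify that a given union of lattices covers $\Z^2$. The statement you actually need---that $G\sim_{\rm val}G^\dag$ when $G$ has an order-$3$ automorphism $\sigma\in\GL$---is an elementary direct check: the three index-$2$ lattices $2\Z\times\Z$, $\sigma(2\Z\times\Z)$, $\sigma^2(2\Z\times\Z)$ already cover $\Z^2$ (this is essentially \cite[\S11]{FK1}). By contrast, in your cases (b)--(d) the analogous union over $\{\mathrm{id},\sigma,\sigma^2\}$ collapses to a single lattice (e.g.\ in case~(b) all three equal $2\Z\times\Z$ because the half-integer entry of $\sigma$ forces $x$ even), so you would need to analyze the three reflections as well---which you do not do.
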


Since $F$ satisfies $(C3)$, we note that the condition $3 \mid \vert {\rm Aut}(G, \Q)\vert$ is automatically satisfied. We have opted to state the corollary in this way to keep the statement fully analogous for automorphism groups other than $\mathbf{D}_3$ or $\mathbf{D}_6$.

In fact, Corollaries \ref{illustration1bis} and \ref{illustration2bis} are consequences of the following theorem, the proof of which occupies the most important part of this paper.

\begin{theorem}
\label{sourcebis}
Let $d \geq 3$. Let $F_1 \in {\rm Bin}(d, \Q)$ be an extraordinary form satisfying $(C3)$. Let $F_2 \in {\rm Bin}(d, \Q)$ be an extraordinary form linked with $F_1$, in other words, by hypothesis we have
$$
F_1 \sim_{\rm val} F_2 \textup{ and } F_1 \not \sim_{{\rm GL}(2, \Z)} F_2.
$$
Then there exist two forms $G_1$ and $G_2$ such that $G_i \sim_{{\rm GL}(2, \Z)} F_i$ and such that
\begin{equation}
\label{G1= G2bis}
G_1^\dag = G_2\textup{ or } G_2^\dag = G_1.
\end{equation}
In the first case we have
$$
\{\sigma \in {\rm Aut}(G_1, \Q) : \sigma^3 = {\rm id}\} \subseteq {\rm GL}(2, \Z),
$$
and in the second case, we have
\begin{equation}
\label{supper}
\{\sigma \in {\rm Aut}(G_2, \Q) : \sigma^3 = {\rm id}\} \subseteq {\rm GL}(2, \Z).
\end{equation}
\end{theorem}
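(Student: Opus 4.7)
The plan is to translate $F_1 \sim_{\rm val} F_2$ into a lattice-covering problem involving the automorphism groups, invoke the catalog of \S \ref{catalog} to enumerate all admissible configurations, and then read off the normalization $G_1^\dag = G_2$ or $G_2^\dag = G_1$ from the resulting finite list.

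First, since both forms have the same value set but are not $\GL$--equivalent, one can write $F_2 = \lambda \cdot F_1 \circ \gamma$ for some $\gamma \in {\rm GL}(2, \Q) \setminus \GL \cdot {\rm Aut}(F_1, \Q)$ and $\lambda \in \Q^{\ast}$. Comparing value sets as in \cite{FK1, FKD4} forces the two-sided covering conditions
$$
\Z^2 \subseteq \bigcup_{\sigma \in {\rm Aut}(F_1, \Q)} \sigma \gamma \Z^2 \text{ and } \Z^2 \subseteq \bigcup_{\tau \in {\rm Aut}(F_2, \Q)} \tau \gamma^{-1} \Z^2
$$
(up to a thin exceptional set controlled by $\gcd$--type conditions). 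Using $(C3)$ I would replace both automorphism groups by concrete conjugates of $\mathbf{D}_3$ or $\mathbf{D}_6$ and isolate the order-$3$ rotations $\rho_1, \rho_2$; the reflections in $\mathbf{D}_3$ play a secondary role, since they can be absorbed by composing with $S$.

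Second, I would appeal to the classification in \S \ref{catalog}, specifically Lemmas \ref{length5} and \ref{length6} fed by the algorithmic enumerations of Appendices \ref{a1} and \ref{a2}, to list all admissible pairs $(\rho_1, \gamma \Z^2)$ satisfying the covering conditions, modulo $\GL$--action on both sides. In every surviving case the conclusion should be that after pre- and post-multiplying $\gamma$ by suitable elements of $\GL$, the linking matrix reduces to either $\mathrm{diag}(2, 1)$ or $\mathrm{diag}(1/2, 1)$. Replacing $F_1$ and $F_2$ by the corresponding $\GL$--translates $G_1$ and $G_2$ then yields $G_1^\dag = G_2$ or $G_2^\dag = G_1$; the integrality statement \eqref{supper} follows because the covering $\Z^2 \subseteq \bigcup_i \rho^i (2\Z \times \Z)$ has a chance of holding only when $\rho$ itself has integer entries.

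The main obstacle is the classification of coverings in the second step. Unlike the $\mathbf{D}_4$ case of \cite{FKD4}, an order-$3$ rotation has irrational eigenvalues over $\Q$ and no axis-aligned invariant sublattice, so the possible interactions of $\langle \rho \rangle$ with $\Z^2$ and $\gamma \Z^2$ are much richer; in particular, as Corollary \ref{illustration1bis} already anticipates, $\rho$ may have half-integer off-diagonal entries, or be entirely half-integer, leading to a combinatorial explosion of coset patterns modulo $2$ and $3$. The algorithms of Appendices \ref{a1}--\ref{a2} are needed precisely to enumerate these patterns exhaustively, and the delicate point is to verify that after sieving by the covering conditions the only survivors lead to the two normalized forms of \eqref{G1= G2bis}, with no spurious extraordinary pair escaping the classification.
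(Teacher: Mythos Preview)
Your outline points in the right direction --- reduce to a lattice-covering problem and feed it into the catalog of \S\ref{catalog} --- but several concrete steps are either mis-stated or missing, and without them the argument does not go through.

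First, your covering condition is set up incorrectly. You write $\Z^2 \subseteq \bigcup_{\sigma} \sigma\gamma\Z^2$, i.e.\ you want integer points to lie in the \emph{images} $\sigma\gamma\Z^2$. The actual condition (see Proposition~\ref{essential}, equation~\eqref{twocoverings}) is that $\Z^2$ is covered by the \emph{preimage} lattices $L(\tau)=\{x\in\Z^2:\tau x\in\Z^2\}$ for $\tau$ ranging over ${\rm Isom}(G_1\to G_2,\Q)$ and over ${\rm Isom}(G_2\to G_1,\Q)$. These are genuinely different objects, and the entire machinery of enveloping lattices $\tilde\Lambda^{(\ell)}(\sigma)$, the polynomials $p_{i,\sigma}(\mathbf t)$, and the counting functions $Z_{\mathbf t,\mathrm{top}}(x)$, $n_{\mathbf t,\mathrm{top}}(x)$ of Lemma~\ref{lZxnx} is built on the preimage formulation.

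Second, your claim that ``the reflections in $\mathbf D_3$ play a secondary role, since they can be absorbed by composing with $S$'' is wrong in this context. The paper isolates the case where the three rotations alone already cover $\Z^2$ (equation~\eqref{backtoC3}) precisely because that case was settled in \cite{FK1}; the bulk of the present proof assumes the rotations do \emph{not} cover, so the three reflections are indispensable and the covering genuinely has six components. This is why Lemmas~\ref{length5} and~\ref{length6} (and Lemma~\ref{lSX3.2}, separating order-$2$ from order-$3$ elements) are needed at all.

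Third, you skip the normalization that makes the problem finite. Proposition~\ref{essential} does real work: it produces $G_i\sim_{\GL}F_i$ and a diagonal $\gamma=\mathrm{diag}(D,D/\nu)$ with $D\mid\nu$, $\nu\le D^2$, and the minimality property~\eqref{gammaminimal}. Only after this do the parameters $(D,\nu,d_2)$ become the objects to restrict, via the long chain of Steps~\ref{1<b<a<6}--\ref{9doesnotdividebis} eliminating $a=5,3,4$, then $(D,\nu)\in\{(5,25),(3,9),\ldots\}$, then $p\mid d_2$ for $p\ge5$, then $2\mid d_2$, then $9\mid d_2$. Each elimination uses a specific combination of the enveloping lattices, Lemma~\ref{lZxnx}, Lemma~\ref{lTripleVanishing}, and the catalog; ``appeal to the classification in \S\ref{catalog}'' is not enough, because the component lattices $\Lambda(\sigma)$ are not directly entries of the table --- one has to squeeze them into the table via the enveloping-lattice inclusions and then rule out each surviving minimal covering one by one. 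The final step, where $(d_2,D,\nu)\in\{(1,2,2),(3,2,2)\}$ forces ${\rm Aut}(G_2,\Q)\subset\GL$, also requires the dual covering~\eqref{dualcovering}, which you do not mention.
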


It is not difficult to see that the converse of Theorem \ref{sourcebis} also holds. Indeed, this is an easy consequence of Corollary \ref{illustration2bis}.

\subsection*{Acknowledgements} 
The first author thanks Michel Waldschmidt for inspiring the thema of this paper, for sharing his ideas and for his encouragements. The second author gratefully acknowledges the support of Dr. Max R\"ossler, the Walter Haefner Foundation and the ETH Z\"urich Foundation.

\section{\texorpdfstring{From Theorem \ref{sourcebis} to Corollaries \ref{illustration1bis} and \ref{illustration2bis}}{From the main theorem to the corollaries}}
\subsection{\texorpdfstring{From Corollary \ref{illustration2bis} to Corollary \ref{illustration1bis}}{Proof of Corollary 1.1}}
\label{Fromto}
The proof is exactly the same as in \cite[\S 3.2]{FK1}.

\subsection{\texorpdfstring{From Theorem \ref{sourcebis} to Corollary \ref{illustration2bis}}{Proof of Corollary 1.3}}
The proof is exactly the same as in \cite[\S 4]{FK1}.

\subsection{\texorpdfstring{Overview of Theorem \ref{sourcebis}}{Overview of Theorem 1.4}}
It remains to prove Theorem \ref{sourcebis}. Its proof follows the strategy from \cite[Theorem 1.5]{FK1} and \cite[Theorem 1.2]{FKD4}. We summarize this strategy as follows: starting from a pair of linked extraordinary forms $(F_1, F_2)$, we build two forms $G_i \in [F_i]_{\GL}$ ($i=1$ or $2$) and two integers $D$ and $\nu$ such that $G_1(X, Y) = G_2(DX, DY/\nu)$ by appealing to a result of Fouvry and Waldschmidt \cite{FW} on the geometry of binary forms (see Proposition \ref{essential}). This leads to a covering of $\Z^2$ by a certain number of lattices, the definition of which is based on the coefficients of the matrices of ${\rm Aut}(G_1, \Q)$ (but not on the coefficients of $G_1$ or $G_2$). The number of these lattices is equal to the cardinality of ${\rm Aut}(G_1, \Q)$, divided by $2$ if $d$ is even.
 
In the present work we study coverings of $\Z^2$ by six lattices. This leads to a substantial increase of complexity, and we have resorted to the help of a computer to solve it. The condition that these lattices cover $\Z^2$ is a huge constraint, leading to essentially one value for $(D, \nu)$ which is $(2, 2)$.

In the next section, we recall all the basic tools from \cite{FK1, FKD4}. However, Lemmas \ref{length5} and \ref{length6} are new.

\section{Lattices, coverings and extraordinary forms} 
\subsection{Lattices}
By a {\it lattice}, we mean an additive subgroup of $\Z^2$ with rank $2$. A lattice is {\it proper} when it is different from $\Z^2$. If $\Lambda$ is a lattice generated by linearly independent vectors $\vec u, \vec v \in \Z^2$, the {\it index} of $\Lambda$ is the positive integer
$$
[\Z^2 : \Lambda] = \vert \Z^2/\Lambda \vert = \vert \det(\vec u, \vec v) \vert.
$$

\begin{definition}
Let $\gamma \in {\rm GL}(2, \Q)$. We define the lattice $L(\gamma)$ through
$$
L(\gamma) := \left\{(x, y)\in \Z^2 : \gamma\begin{pmatrix} x \\ y \end{pmatrix} \in \Z^2\right\}.
$$
\end{definition}

\noindent We will frequently use the obvious remarks
\begin{equation}
\label{obvious0}
L(\gamma) = L(-\gamma)
\end{equation}
and
\begin{equation}
\label{obvious}
L(\gamma) = \Z^2 \iff \gamma \text{ has integer coefficients}.
\end{equation}
We recall the following property, which is proved by a direct calculation (see \cite[Lemma 6.9] {FK1}).

\begin{lemma}
\label{multipleofdet-1} 
Let $\gamma \in {\rm GL}(2, \Q)$. Then $[\Z^2 : L(\gamma)]$ is an integer multiple of $\vert \det \gamma \vert^{-1}$.
\end{lemma}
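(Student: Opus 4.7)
The plan is to reduce $\gamma$ to a diagonal form by Smith normal form and then compute $[\Z^2 : L(\gamma)]$ and $|\det \gamma|$ explicitly.

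First I would clear denominators: let $N$ be the least common multiple of the denominators of the entries of $\gamma$, so that $N\gamma \in M_2(\Z)$ with $\det(N\gamma) = N^2 \det(\gamma) \neq 0$. By Smith normal form, there exist $U, V \in \GL(2,\Z)$ and positive integers $d_1 \mid d_2$ such that $U \cdot (N\gamma) \cdot V = \mathrm{diag}(d_1, d_2)$. Equivalently, $\gamma = \tfrac{1}{N}\, U^{-1}\, \mathrm{diag}(d_1, d_2)\, V^{-1}$.

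Next I would exploit the fact that $U^{-1}$ and $V^{-1}$ preserve $\Z^2$. Writing $(x', y')^T = V^{-1}(x,y)^T$, the condition $(x,y) \in \Z^2$ becomes $(x', y') \in \Z^2$, and since $U^{-1} \in \GL(2,\Z)$ the condition $\gamma (x,y)^T \in \Z^2$ becomes $\tfrac{1}{N}(d_1 x', d_2 y')^T \in \Z^2$, i.e.\ $N \mid d_1 x'$ and $N \mid d_2 y'$. Setting $g_i := \gcd(N, d_i)$, this is equivalent to $(N/g_1) \mid x'$ and $(N/g_2) \mid y'$. Hence under the lattice automorphism $V$,
$$
L(\gamma) \,\cong\, \tfrac{N}{g_1}\Z \,\times\, \tfrac{N}{g_2}\Z,
$$
so $[\Z^2 : L(\gamma)] = N^2/(g_1 g_2)$.

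Finally, since $|\det \gamma| = d_1 d_2 / N^2$, one obtains
$$
[\Z^2 : L(\gamma)] \cdot |\det \gamma| \;=\; \frac{N^2}{g_1 g_2} \cdot \frac{d_1 d_2}{N^2} \;=\; \frac{d_1}{g_1} \cdot \frac{d_2}{g_2},
$$
which is a positive integer because each $g_i$ divides $d_i$. This gives the claim. There is no real obstacle here: the only mild care needed is to make sure the change-of-basis step via $U,V \in \GL(2,\Z)$ is performed on the correct side so that it genuinely preserves the ambient $\Z^2$ in which $L(\gamma)$ is defined.
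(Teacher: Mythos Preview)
Your proof is correct. The paper does not give its own argument here but merely cites \cite[Lemma~6.9]{FK1} as ``a direct calculation''; your reduction via Smith normal form is exactly such a direct calculation, and the final identity $[\Z^2:L(\gamma)]\cdot|\det\gamma|=(d_1/g_1)(d_2/g_2)\in\Z_{>0}$ cleanly yields the claim.
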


The following lemma provides a unique decomposition of matrices in ${\rm GL}(2, \Q)$ (see \cite[Lemma 6.8]{FK1}).

\begin{lemma}
\label{Standard} 
Every matrix $f \in {\rm GL}(2, \Q)$ can be uniquely written under the form
$$
f= \frac ND \begin{pmatrix} a_1& a_2\\ a_3 & a_4\end{pmatrix},
$$
where the integers $a_1$, $a_2$, $a_3$, $a_4$, $D$ and $N$ satisfy the conditions
$$
D,\, N \geq 1, \ \gcd(D,N) =1,\ \gcd(a_1,\, a_2, \, a_3, \, a_4) =1.
$$
\end{lemma}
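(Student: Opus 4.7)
The plan is to establish existence and uniqueness separately, both by elementary manipulations of gcds.

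For existence, I would clear denominators in $f \in \mathrm{GL}(2,\Q)$. Since $f$ is invertible, it is in particular nonzero, so one can choose a positive common denominator $D_0$ of the four entries of $f$ (for instance the lcm of the denominators when written in lowest terms); then $D_0 f$ has integer entries $b_1, b_2, b_3, b_4$, not all zero. Set $g := \gcd(b_1, b_2, b_3, b_4) \geq 1$ and write $b_i = g\, a_i$, so that $\gcd(a_1, a_2, a_3, a_4) = 1$. Finally, write the positive rational $g/D_0$ in lowest terms as $N/D$ with $N, D \geq 1$ and $\gcd(N,D)=1$. This yields $f = (N/D)(a_i)_{i=1,\ldots,4}$ in the required shape.

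For uniqueness, suppose that $f = (N/D)\,M = (N'/D')\,M'$ are two representations satisfying the stated conditions, with $M = (a_i)$ and $M' = (a_i')$. Cross-multiplying gives $N D'\, a_i = N' D\, a_i'$ for every entry $i$. Taking the gcd over $i$ and using the identity $\gcd(k x_1, \ldots, k x_4) = \lvert k\rvert \cdot \gcd(x_1, \ldots, x_4)$, together with $\gcd(a_1, \ldots, a_4) = \gcd(a_1', \ldots, a_4') = 1$, I obtain $ND' = N'D$. Combined with the coprimality of $N$ with $D$ and of $N'$ with $D'$, and with the positivity convention $D, D', N, N' \geq 1$, this forces $N = N'$ and $D = D'$; then $a_i = a_i'$ follows entrywise.

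The only real point to watch is sign bookkeeping: one must fix once and for all that $g$ denotes the \emph{positive} gcd, that $g/D_0$ is reduced to lowest terms with positive denominator, and remember that the individual $a_i$ may be negative even though $\gcd(\lvert a_1\rvert, \ldots, \lvert a_4\rvert)=1$. Beyond this normalization, the lemma reduces to standard facts about reduction of fractions to lowest terms, so no serious obstacle is expected.
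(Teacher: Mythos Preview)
Your argument is correct: the existence step via clearing denominators and extracting the content, and the uniqueness step via comparing gcds of the entries, both go through cleanly. Note that the paper does not actually prove this lemma in the text; it simply cites \cite[Lemma~6.8]{FK1}, so there is no in-paper proof to compare against, and your elementary argument is exactly the natural one.
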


Let $p \geq 2$ be a prime and let $v_p(n)$ be the $p$--adic valuation of the integer $n$ with the convention $v_p(0) = + \infty$. Several times we will use the following easy lemma without further reference.

\begin{lemma} 
Let $p$ be a prime. Let $\alpha, \beta, \gamma \in \Z$, with $\gamma \neq 0$. Then the lattice defined by the equation
$$
\alpha x_1 + \beta x_2 \equiv 0 \bmod \gamma
$$
\begin{itemize}
\item is included in the lattice
$$
\{(x_1, x_2) : x_1 \equiv 0 \bmod p\}
$$
as soon as the following inequality holds
$$
v_p(\alpha) <\min \left( v_p(\beta), v_p(\gamma)\right),
$$
\item has its index divisible by $p$ as soon as the following inequality holds
$$
\min \left( v_p( \alpha), v_p (\beta)\right) < v_p (\gamma).
$$
\end{itemize}
\end{lemma}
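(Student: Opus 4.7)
My plan is to prove each bullet separately; both reduce to simple manipulations with the $p$-adic valuation. For the first bullet, I would take $(x_1, x_2)$ in the congruence lattice, so that $\gamma \mid \alpha x_1 + \beta x_2$ and hence $v_p(\gamma) \leq v_p(\alpha x_1 + \beta x_2)$. I then argue by contradiction: if $p \nmid x_1$, then $v_p(\alpha x_1) = v_p(\alpha)$, while $v_p(\beta x_2) \geq v_p(\beta) > v_p(\alpha)$ by the standing hypothesis. The strict inequality between $v_p(\alpha x_1)$ and $v_p(\beta x_2)$ lets me invoke the equality case of the ultrametric inequality, so $v_p(\alpha x_1 + \beta x_2) = v_p(\alpha) < v_p(\gamma)$, contradicting the divisibility. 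Hence $p \mid x_1$, which is exactly the desired inclusion.

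For the second bullet, I would compute $[\Z^2 : \Lambda]$ directly. Call $\Lambda$ the congruence lattice and observe that $\Lambda$ is the kernel of the group homomorphism $\phi : \Z^2 \to \Z/\gamma\Z$ given by $\phi(x_1, x_2) = \alpha x_1 + \beta x_2 \bmod \gamma$. Its image equals
\[
(\alpha\Z + \beta\Z + \gamma\Z)/\gamma\Z \;=\; \gcd(\alpha,\beta,\gamma)\,\Z\big/\gamma\Z
\]
by Bézout applied to the three integers $\alpha,\beta,\gamma$, so its cardinality is $|\gamma|/\gcd(\alpha,\beta,\gamma)$. Consequently $[\Z^2 : \Lambda] = |\gamma|/\gcd(\alpha,\beta,\gamma)$, whose $p$-adic valuation is $v_p(\gamma) - \min\bigl(v_p(\alpha), v_p(\beta), v_p(\gamma)\bigr)$. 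Under the hypothesis $\min(v_p(\alpha), v_p(\beta)) < v_p(\gamma)$, the inner minimum collapses to $\min(v_p(\alpha), v_p(\beta))$, the difference is strictly positive, and therefore $p$ divides the index.

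There is no real obstacle here; the whole statement amounts to bookkeeping around two appeals to the ultrametric property of $v_p$. The only mild subtlety is to identify the image of $\phi$ correctly as $\gcd(\alpha, \beta, \gamma)\,\Z/\gamma\Z$ (rather than $\gcd(\alpha,\beta)\,\Z/\gamma\Z$), which is the reason the hypothesis in the second bullet only needs to compare $\min(v_p(\alpha), v_p(\beta))$ with $v_p(\gamma)$, without any separate assumption on $v_p(\gcd(\alpha,\beta))$. Since the statement is flagged in the text as an ``easy lemma'' that will be used without further reference, I expect my argument and the author's to coincide essentially verbatim.
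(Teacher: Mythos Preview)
Your proof is correct in both parts; the ultrametric argument for the first bullet and the index computation $[\Z^2:\Lambda]=|\gamma|/\gcd(\alpha,\beta,\gamma)$ for the second are exactly what is needed. The paper itself gives no proof at all for this lemma---it is stated as an ``easy lemma'' to be used without further reference---so there is nothing to compare against, and your write-up would serve perfectly well as the omitted justification.
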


\subsection{Coverings}
\begin{definition} 
Let $k\geq 1$ be an integer and let $(\Lambda_i)_{1 \leq i \leq k}$ be $k$ lattices. We say that 
$$
\mathcal C = \{\Lambda_1, \ldots, \Lambda_k\}
$$ 
is a covering of $\Z^2$ (or a covering) if and only if we have the equality 
$$
\bigcup_{1 \leq i \leq k} \Lambda_i = \Z^2.
$$
The lattices $\Lambda_j$ are the component lattices of the covering $\mathcal C$.
\end{definition}

Here is a first property of coverings that we will use to assert that some given sets of lattices can not be coverings.

\begin{lemma}
\label{sumofdensities}
Let $k \geq 2$ and $\mathcal C = \{\Lambda_1, \ldots, \Lambda_k\}$ be a covering. We then have the inequality
$$
\sum_{i = 1}^k \frac{1}{[\Z^2 : \Lambda_i]} >1. 
$$
\end{lemma}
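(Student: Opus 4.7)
The plan is to reduce the inequality to a finite counting problem modulo a suitable common multiple $N$ of all the indices $[\Z^2:\Lambda_i]$. First I would observe that if $\Lambda \subseteq \Z^2$ is a sublattice of index $m$, then $m \Z^2 \subseteq \Lambda$, since every element of the order-$m$ group $\Z^2/\Lambda$ is annihilated by $m$. Choosing $N$ divisible by every $[\Z^2:\Lambda_i]$ -- for instance $N := \prod_{i=1}^k [\Z^2:\Lambda_i]$ -- each lattice $\Lambda_i$ therefore contains $N \Z^2$, which means that $\Lambda_i$ is a union of cosets of $N \Z^2$. A direct count then shows that $\Lambda_i \cap \{0, 1, \ldots, N-1\}^2$ has exactly $N^2/[\Z^2:\Lambda_i]$ elements, since $\{0,1,\ldots,N-1\}^2$ is a complete set of coset representatives for $N\Z^2$ in $\Z^2$.

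Next, the covering hypothesis $\bigcup_i \Lambda_i = \Z^2$ implies that every one of the $N^2$ points of $\{0, 1, \ldots, N-1\}^2$ belongs to at least one $\Lambda_i$. Summing cardinalities with multiplicity yields
$$
\sum_{i=1}^k \frac{N^2}{[\Z^2 : \Lambda_i]} \;=\; \sum_{i=1}^k \left|\Lambda_i \cap \{0, 1, \ldots, N-1\}^2\right| \;\geq\; N^2,
$$
which, after dividing by $N^2$, already gives the non-strict version of the claimed inequality.

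To promote this to a strict inequality, I would invoke the hypothesis $k \geq 2$ together with the fact that $(0,0)$ lies in every $\Lambda_i$, as lattices are subgroups of $\Z^2$. The origin is therefore counted at least $k \geq 2$ times on the left-hand side, while contributing only $1$ to the $N^2$ on the right. This gives the strengthened bound
$$
\sum_{i=1}^k \frac{N^2}{[\Z^2 : \Lambda_i]} \;\geq\; N^2 + (k-1) \;\geq\; N^2 + 1,
$$
and dividing by $N^2$ yields $\sum_i 1/[\Z^2:\Lambda_i] \geq 1 + 1/N^2 > 1$, as required. There is no serious obstacle to this approach: the only step requiring mild verification is the count $|\Lambda_i \cap \{0,\ldots,N-1\}^2| = N^2/[\Z^2:\Lambda_i]$, which follows immediately from $N\Z^2 \subseteq \Lambda_i$. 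An alternative, equally short route would be to invoke natural densities of lattices together with subadditivity, but the finite counting argument above avoids any limiting process.
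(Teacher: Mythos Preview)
Your proof is correct and takes a genuinely different route from the paper's own argument. The paper works with natural densities: it counts lattice points in large disks $D(R)$, uses the asymptotic $|\Lambda_i \cap D(R)| \sim \pi R^2/[\Z^2:\Lambda_i]$, and obtains the strict inequality via the inclusion--exclusion correction $-\chi_{\Lambda_1 \cap \Lambda_2}$, which in the limit contributes the positive term $1/[\Z^2:\Lambda_1\cap\Lambda_2]$. By contrast, you reduce everything to an exact finite count in the box $\{0,\ldots,N-1\}^2$ modulo a common multiple $N$, and you extract strictness from the fact that the origin is counted $k\geq 2$ times. Your approach is more elementary in that it avoids any limiting process and yields an explicit lower bound $1+1/N^2$; the paper's approach gives a sharper surplus $1/[\Z^2:\Lambda_1\cap\Lambda_2]$ (independent of $k$), which is not needed for the applications here but is conceptually a bit stronger.
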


\begin{proof} 
We state a general combinatorial inequality: let $k\geq 2$ be an integer and let $\mathcal E_i$ (for $1 \leq i \leq k$) be subsets of some set $\mathcal E$. Let $\chi_{\mathcal F}$ be the characteristic function of a subset $\mathcal F$ of $\mathcal E$. Then for every $x\in \mathcal E$ one has the inequality
\begin{equation}
\label{sumofchi}
\chi_{\mathcal E_1 \cup \cdots \cup \mathcal E_k}(x)\leq \chi_{\mathcal E_1}(x) + \cdots + \chi_{\mathcal E_k}(x) -\chi_{\mathcal E_1 \cap \mathcal E_2}(x). 
\end{equation}
In the euclidean plane, let $D(R)$ be the disk centered at the origin and with radius $R$. By hypothesis we have $\left(\Lambda_1 \cup \dots \cup \Lambda_k \right)\cap D(R) = \Z^2 \cap D(R)$. As $R$ tends to infinity, one has
\begin{equation}
\label{insideadisk}
\sum_{\boldsymbol x\in \Z^2}\chi_{\Lambda_i \cap D(R)}(\boldsymbol x) \sim \frac{\pi R^2}{[\Z^2 : \Lambda_i]}.
\end{equation}
We apply the above formula \eqref{sumofchi} with the choice
$$
\mathcal E_i= \Lambda_i \cap D(R).
$$
We sum over all $\boldsymbol x \in \Z^2$, apply \eqref{insideadisk} and let $R$ tend to infinity to obtain the inequality
$$
\pi \leq \pi \left( \frac 1 {[\Z^2 : \Lambda_1]} + \cdots + \frac1 {[\Z^2 : \Lambda_k]}- \frac 1 {[\Z^2 : \Lambda_1\cap \Lambda_2]}\right).
$$
This gives Lemma \ref{sumofdensities}, since $\Lambda_1 \cap \Lambda_2$ is also a lattice.
\end{proof}

\begin{definition}[Minimal covering]
Let $k \geq 1$, let $\Lambda_i$ be lattices and let $\mathcal C = \{\Lambda_1, \ldots, \Lambda_k\}$ be a covering. We say that $\mathcal C$ is a minimal covering of length $k$ if and only if replacing any $\Lambda_i$ by some proper sublattice $\Lambda'_i \varsubsetneq \Lambda_i$, the set 
$$
\{\Lambda_1, \ldots, \Lambda_{i - 1}, \Lambda'_i, \Lambda_{i + 1}, \ldots, \Lambda_k \}
$$
is not a covering.
\end{definition}

If $\mathcal C$ is a minimal covering of length $k\geq 2$ and if $1 \leq i \neq j \leq k$, we never have $\Lambda_i \subseteq \Lambda_j$. In particular, every $\Lambda_i$ is a proper lattice. The following lemma asserts that, from any covering, one can extract a minimal covering (see \cite[Lemma 3.3]{FKD4}).

\begin{lemma}
\label{existenceminimal} 
Let $k \geq 1$ and let 
$$
\mathcal C := \{\Lambda_1, \ldots, \Lambda_k\}
$$
be a covering. Then there exists an integer $1 \leq k' \leq k$, an injection $\phi: \{1, \ldots, k'\} \rightarrow \{1, \ldots, k\}$ and lattices $\Lambda'_j$ ($1 \leq j \leq k'$) such that 
$$
\mathcal C ':= \{\Lambda'_1, \ldots, \Lambda'_{k'}\}
$$
is a minimal covering, and for all $1 \leq j \leq k'$ one has $\Lambda'_j \subseteq \Lambda_{\phi(j)}$. In particular, for all $1\leq j \leq k'$, one has
\begin{equation}
\label{indexdividesindex}
[\Z^2 :\Lambda_{\phi (j)} ] \,\Bigl\vert\, [\Z^2: \Lambda'_j].
\end{equation}
\end{lemma}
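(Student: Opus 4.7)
My plan is to extract the minimal sub-covering $\mathcal{C}'$ by a two-phase greedy procedure, and to justify termination via a density-based bound on lattice indices.

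\textbf{Phase 1 (deletion).} Iteratively test whether some $\Lambda_i$ is \emph{redundant} in the current collection, i.e., whether $\bigcup_{j \in J \setminus \{i\}} \Lambda_j = \Z^2$; if so, remove $\Lambda_i$. Since the index set $J$ strictly shrinks at each step, this terminates in at most $k$ iterations and yields a sub-covering $\{\Lambda_j : j \in J^*\}$ with $J^* \subseteq \{1,\ldots,k\}$ in which no lattice is redundant. In particular, for each $i \in J^*$, the set
$$
R_i \;:=\; \Z^2 \setminus \bigcup_{j \in J^*,\, j \neq i} \Lambda_j
$$
is nonempty; as a nonempty union of cosets of the finite-index sublattice $\bigcap_{j \in J^*,\, j \neq i} \Lambda_j$, it has a well-defined positive natural density $\delta_i > 0$ in $\Z^2$.

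\textbf{Phase 2 (shrinkage).} Starting from $\{\Lambda_j : j \in J^*\}$, repeatedly search for some $i \in J^*$ and a proper sublattice $\Lambda' \subsetneq \Lambda_i^{\mathrm{cur}}$ whose substitution still yields a covering, and perform the replacement. When no such move remains, the resulting collection $\{\Lambda'_j : j \in J^*\}$ is by definition a minimal covering. Setting $k' := |J^*|$ and letting $\phi \colon \{1,\ldots,k'\} \to J^* \subseteq \{1,\ldots,k\}$ be any bijection yields the required data, and the divisibility statement \eqref{indexdividesindex} is immediate from the tower formula $[\Z^2 : \Lambda'_j] = [\Z^2 : \Lambda_{\phi(j)}] \cdot [\Lambda_{\phi(j)} : \Lambda'_j]$.

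The main obstacle is \textbf{termination of Phase 2}. At every stage the current lattice $\Lambda_i^{\mathrm{cur}}$ must still contain the current set $R_i^{\mathrm{cur}} := \Z^2 \setminus \bigcup_{j \neq i} \Lambda_j^{\mathrm{cur}}$. Since every shrinkage of some other component $\Lambda_j^{\mathrm{cur}}$ ($j \neq i$) can only enlarge $R_i^{\mathrm{cur}}$, we have $R_i^{\mathrm{cur}} \supseteq R_i$ throughout, whence $\Lambda_i^{\mathrm{cur}} \supseteq R_i$ at every stage. As $\Lambda_i^{\mathrm{cur}}$ is a sublattice of $\Z^2$ containing a set of density $\delta_i > 0$, its own density is at least $\delta_i$, giving the uniform bound
$$
[\Z^2 : \Lambda_i^{\mathrm{cur}}] \;\leq\; 1/\delta_i.
$$
There are only finitely many sublattices of $\Lambda_i$ of index below any given bound, so the strictly descending chain of replacements of $\Lambda_i$ is finite. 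Summing over the finitely many $i \in J^*$, Phase 2 terminates, and the algorithm outputs the desired minimal sub-covering.
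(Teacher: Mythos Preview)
Your proof is correct. The paper does not prove this lemma here but merely cites \cite[Lemma 3.3]{FKD4}, so a line-by-line comparison with the present text is not possible; your two-phase argument (delete redundant lattices, then shrink each surviving lattice) with termination guaranteed by the uniform density bound $[\Z^2:\Lambda_i^{\mathrm{cur}}]\le 1/\delta_i$ is a clean, self-contained substitute. The key observation that $R_i^{\mathrm{cur}}\supseteq R_i$ throughout Phase~2 (because shrinking other components can only enlarge the set that $\Lambda_i^{\mathrm{cur}}$ must cover) is exactly what makes the index bound uniform across all stages, and the tower formula gives \eqref{indexdividesindex} immediately.
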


\begin{definition} 
With the conventions from Lemma \ref{existenceminimal} we say that $\mathcal C'$ is a minimal covering {\it extracted } from the covering $\mathcal C$. Note that, $\mathcal C$ being given, the minimal extracted covering $\mathcal C'$ is not necessarily unique.
\end{definition} 

The divisibility condition \eqref{indexdividesindex} will prove to be an efficient tool for eliminating a given set of $k$ lattices as a covering of $\Z^2$, as soon as one has at their disposal a catalog of minimal coverings with length $\leq k$. This remark is a crucial tool in the proof of Theorem \ref{sourcebis}.

\subsection{A catalog of minimal coverings} 
\label{catalog} 
If the lattice $\Lambda$ is $\Z$--generated by the two vectors $\vec u = (u_1, u_2)$ and $\vec v= (v_1, v_2)$ we write 
$$
\Lambda= 
\begin{pmatrix}
u_1 & v_1 \\ 
u_2 & v_2
\end{pmatrix}.
$$
Below we summarize the facts on minimal coverings $\mathcal C$ that we will use in our proofs.

\begin{lemma}
\label{length2}
There is no minimal covering with length $2$.
\end{lemma}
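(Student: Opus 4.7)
The plan is to argue by contradiction and exploit the density inequality from Lemma \ref{sumofdensities}. Suppose $\mathcal{C} = \{\Lambda_1, \Lambda_2\}$ were a minimal covering of length $2$. By the remark immediately following the definition of minimal covering, every component lattice of a minimal covering of length $\geq 2$ is a proper lattice of $\Z^2$; in particular, both indices $[\Z^2 : \Lambda_1]$ and $[\Z^2 : \Lambda_2]$ are integers $\geq 2$.

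Applying Lemma \ref{sumofdensities} to $\mathcal{C}$ would then give
$$
1 < \frac{1}{[\Z^2 : \Lambda_1]} + \frac{1}{[\Z^2 : \Lambda_2]} \leq \frac{1}{2} + \frac{1}{2} = 1,
$$
which is impossible. Hence no minimal covering of length $2$ can exist.

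The argument is essentially immediate once the two preceding lemmas are granted; there is no real obstacle. As a sanity check, one may notice that the same conclusion follows from the purely group-theoretic fact that an abelian group is never the union of two proper subgroups (if $\vec{u} \in \Lambda_1 \setminus \Lambda_2$ and $\vec{v} \in \Lambda_2 \setminus \Lambda_1$, then $\vec{u} + \vec{v}$ lies in neither $\Lambda_1$ nor $\Lambda_2$), which would also contradict $\Lambda_1 \cup \Lambda_2 = \Z^2$. I would present only the density version above, as it fits the framework the paper develops for dealing with longer coverings and foreshadows the method used in the subsequent Lemmas \ref{length5} and \ref{length6}.
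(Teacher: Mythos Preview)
Your proof is correct. The paper actually states Lemma~\ref{length2} without proof, treating it as immediate; your argument via Lemma~\ref{sumofdensities} is a clean way to justify it within the paper's framework, and the alternative group-theoretic remark you give is equally valid.
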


\begin{lemma}[{\cite[Lemma 6.7]{FK1}}]
\label{length3} 
There is exactly one minimal covering with length $3$, namely
$$
\left\{
\begin{pmatrix} 
2 & 0 \\ 0 & 1
\end{pmatrix},\ 
\begin{pmatrix} 
1 & 0 \\ 0 & 2
\end{pmatrix},\ 
\begin{pmatrix}
1 & 0\\ 1 & 2
\end{pmatrix}
\right\}.
$$
\end{lemma}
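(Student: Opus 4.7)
The plan is to verify directly that the displayed set of three lattices is a minimal covering, and then to exploit Lemma~\ref{sumofdensities} twice---once globally and once on the complement of one component---to force each of the three indices to be equal to $2$.

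Writing $\Lambda_1 := \{(x,y)\in\Z^2 : x\equiv 0\bmod 2\}$, $\Lambda_2 := \{y\equiv 0\bmod 2\}$ and $\Lambda_3 := \{x+y\equiv 0\bmod 2\}$ for the three lattices of the statement, the first task is to verify existence and minimality. They form a covering because if $x$ and $y$ are both odd then $x+y$ is even. The covering is minimal: the residual set $\Lambda_1\setminus(\Lambda_2\cup\Lambda_3) = \{x\text{ even},\ y\text{ odd}\}$ contains the two vectors $(0,1)$ and $(2,1)$, which already generate $\Lambda_1$ as an abelian group, so no proper sublattice of $\Lambda_1$ contains this residual set, and hence $\Lambda_1$ cannot be shrunk without destroying the covering; by symmetry the same holds for $\Lambda_2$ and $\Lambda_3$.

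For uniqueness, let $\{\Lambda_1,\Lambda_2,\Lambda_3\}$ be an arbitrary minimal covering of length $3$, write $n_i := [\Z^2:\Lambda_i]$, and assume after relabeling that $n_1\leq n_2\leq n_3$. Each $\Lambda_i$ is proper (so $n_i\geq 2$), and by minimality no $\Lambda_i$ is contained in any $\Lambda_j$ with $j\neq i$. Lemma~\ref{sumofdensities} gives $1/n_1+1/n_2+1/n_3>1$; since $3\cdot(1/3)=1$, this forces $n_1=2$. To push this further and obtain $n_2=n_3=2$, I would apply a local density argument on $C:=\Z^2\setminus\Lambda_1$, which has natural density $1/2$ and must be covered by $\Lambda_2\cup\Lambda_3$. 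Since $\Lambda_i\not\subseteq\Lambda_1$ by minimality, $\Lambda_i\cap\Lambda_1$ has index $2$ in $\Lambda_i$ and index $2n_i$ in $\Z^2$, so $\Lambda_i\cap C$ is a coset of $\Lambda_i\cap\Lambda_1$ of natural density $1/(2n_i)$. Subadditivity then yields
$$
\frac{1}{2} \leq \frac{1}{2n_2}+\frac{1}{2n_3},
$$
i.e.\ $1/n_2+1/n_3\geq 1$, which combined with $n_2\leq n_3$ gives $n_2=n_3=2$.

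Thus $\Lambda_1,\Lambda_2,\Lambda_3$ are three pairwise distinct index-$2$ sublattices of $\Z^2$. Since $\Z^2$ possesses exactly three such sublattices---namely the three displayed in the lemma, corresponding to the three nonzero characters $\Z^2\to\Z/2\Z$---the set $\{\Lambda_1,\Lambda_2,\Lambda_3\}$ must coincide with the displayed covering, proving uniqueness. The main obstacle is the step forcing $n_2=n_3=2$: a bare application of Lemma~\ref{sumofdensities} merely gives $1/n_1+1/n_2+1/n_3>1$ and leaves open spurious triples such as $(2,3,3)$, $(2,3,4)$ and $(2,3,5)$; the key device is to re-apply the density inequality restricted to the smaller set $C$ of density $1/2$, which is strictly more restrictive than the global inequality and kills every such triple.
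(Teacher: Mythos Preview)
The paper does not prove this lemma here; it is quoted from \cite[Lemma 6.7]{FK1}. Your argument is correct and self-contained: the decisive step is the refinement of Lemma~\ref{sumofdensities} on the complement $C=\Z^2\setminus\Lambda_1$, where the observation that each $\Lambda_i\cap C$ (for $i=2,3$) is a single coset of $\Lambda_i\cap\Lambda_1$ of density $1/(2n_i)$---using $\Lambda_i\not\subseteq\Lambda_1$ from minimality and $[\Z^2:\Lambda_1]=2$---yields $1/n_2+1/n_3\geq 1$ and eliminates the spurious triples $(2,3,3)$, $(2,3,4)$, $(2,3,5)$. The concluding identification of the three index-$2$ sublattices with the kernels of the three nonzero characters $\Z^2\to\Z/2\Z$ is clean.
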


\begin{lemma}[{\cite[Theorem 3.4]{FKD4}}]
\label{length4}
There are exactly four minimal coverings with length $4$. These are
$$
\left\{
\begin{pmatrix}
1 & 0 \\ 
0 & 2
\end{pmatrix},
\begin{pmatrix}
4 & 0 \\ 
0 & 1
\end{pmatrix},
\begin{pmatrix}
1 & 0 \\ 
1 & 2
\end{pmatrix},
\begin{pmatrix}
2 & 0 \\ 
1 & 2
\end{pmatrix}
\right\},
$$
$$
\left\{
\begin{pmatrix}
1 & 0 \\ 
0 & 4
\end{pmatrix},
\begin{pmatrix}
2 & 0 \\ 
0 & 1
\end{pmatrix},
\begin{pmatrix}
1 & 0 \\ 
1& 2
\end{pmatrix},
\begin{pmatrix}
1 & 0 \\ 
2 & 4
\end{pmatrix}
\right\},
$$
$$
\left\{
\begin{pmatrix}
1 & 0 \\ 
0 & 2 
\end{pmatrix},
\begin{pmatrix}
2 & 0 \\ 
0 & 1
\end{pmatrix},
\begin{pmatrix}
1 & 0 \\ 
1 & 4
\end{pmatrix},
\begin{pmatrix}
1 & 0 \\ 
3 & 4 
\end{pmatrix}
\right\},
$$
$$
\left\{
\begin{pmatrix}
1 & 0 \\
0 & 3
\end{pmatrix},
\begin{pmatrix}
3 & 0 \\ 
0 & 1
\end{pmatrix},
\begin{pmatrix}
1 & 0 \\ 1 & 3
\end{pmatrix},
\begin{pmatrix}
1 & 0 \\ 
2 & 3
\end{pmatrix}
\right\}.
$$
\end{lemma}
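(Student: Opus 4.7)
My plan is to combine the density bound of Lemma \ref{sumofdensities} with the classification from Lemma \ref{length3} to reduce the problem to a short list of candidate index tuples $(n_1, n_2, n_3, n_4)$ (with $n_i := [\Z^2 : \Lambda_i]$ sorted non-decreasingly), and then to analyse each remaining tuple by parametrising the sublattices of each fixed index. Since every $\Lambda_i$ is proper, $n_i \geq 2$, and Lemma \ref{sumofdensities} forces $\sum_i n_i^{-1} > 1$; as $n_1 \geq 4$ would give $\sum_i n_i^{-1} \leq 1$, we must have $n_1 \in \{2, 3\}$. Furthermore, if three of the $\Lambda_i$ already covered $\Z^2$ then the fourth would be completely redundant (replaceable by any proper sublattice of itself), contradicting minimality. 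Together with Lemma \ref{length3}, which pins down the unique minimal length-$3$ covering as the three index-$2$ sublattices of $\Z^2$, this rules out every configuration containing all three index-$2$ sublattices.

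I would next dispatch the homogeneous case $n_1 = n_2 = n_3 = n_4 = 3$ directly. The four sublattices of index $3$ are exactly the lines $\{(x,y) \in \Z^2 : ax + by \equiv 0 \pmod{3}\}$, parametrised by $(a : b) \in \mathbb{P}^1(\mathbb{F}_3)$, and every nonzero residue class in $(\Z/3\Z)^2$ lies on exactly one such line. Their union therefore covers $\Z^2$; minimality holds because each $\Lambda_i$ contains lattice vectors whose reduction mod $3$ is nonzero and lies on no other line, so any proper sublattice of $\Lambda_i$ omits such a vector. This yields the fourth covering in the statement. In every remaining case $n_1 = 2$, and at most two of the $n_i$ equal $2$.

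The main obstacle is the enumeration in these heterogeneous cases, because the density bound alone still permits arbitrarily large $n_4$ (for instance, $(2, 2, n_3, n_4)$ satisfies $\sum_i n_i^{-1} > 1$ unconditionally). To cut this down I would supplement Lemma \ref{sumofdensities} with the divisibility condition \eqref{indexdividesindex} applied to minimal subcoverings produced by Lemmas \ref{length2} and \ref{length3}: whenever a subfamily of the $\Lambda_i$ almost-covers $\Z^2$, extracting a minimal subcovering via Lemma \ref{existenceminimal} yields sharp divisibility constraints on the remaining indices, which force the maximum $n_4$ to be effectively bounded. For each surviving tuple I would enumerate the finitely many sublattices of that index via their Hermite normal forms, discard those exhibiting some containment $\Lambda_i \subseteq \Lambda_j$, test the covering property on $(\Z/N\Z)^2$ with $N = \mathrm{lcm}(n_1, n_2, n_3, n_4)$, and check minimality of each survivor by exhibiting, for every $i$, a lattice point in $\Lambda_i$ missed by all $\Lambda_j$ with $j \neq i$. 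A careful case analysis of this type (naturally amenable to computer verification, as in the length-$6$ analysis of the present paper) is expected to leave exactly the three remaining coverings of the statement, all sharing the index multiset $\{2, 2, 4, 4\}$.
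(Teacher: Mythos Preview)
The paper does not give a self-contained proof of this lemma: it is imported from the companion paper \cite[Theorem 3.4]{FKD4}, and independently re-verified by the enumeration algorithm of Appendix~\ref{a1}, whose output (Table~\ref{table1}, entries 2, 11, 20, 26) contains exactly these four length-$4$ minimal coverings. So the paper's ``proof'' is either a citation or a purely computational search, with no preliminary theoretical reduction of the kind you propose.

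Your hybrid approach---cut down the index tuples by Lemma~\ref{sumofdensities} and then enumerate---is reasonable in spirit, and your treatment of the homogeneous case $(3,3,3,3)$ is correct. But there are two genuine gaps in the theoretical part. First, your sentence ``In every remaining case $n_1 = 2$'' is unjustified: after disposing of $(3,3,3,3)$ you have not excluded tuples such as $(3,3,3,4)$, $(3,3,4,4)$, or $(3,4,4,4)$, all of which pass the density test $\sum n_i^{-1} > 1$. These must be ruled out separately (they do fail, but this requires an argument). Second, your proposed mechanism for bounding $n_4$---applying Lemma~\ref{existenceminimal} and \eqref{indexdividesindex} to ``subfamilies that almost-cover''---is incoherent as stated: Lemma~\ref{existenceminimal} extracts a minimal covering from a \emph{covering}, and a proper subfamily of a minimal length-$4$ covering is never a covering, so there is nothing to extract. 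What actually bounds $n_4$ is a direct coset argument (e.g.\ if two of the $\Lambda_i$ have index $2$, their union misses a single coset of $2\Z^2$, and the remaining two lattices must cover that coset, forcing each to contain $2\Z^2$ and hence to have index dividing $4$); you gesture at this but do not carry it out. Since you ultimately defer the enumeration to computer verification anyway, these gaps are not fatal, but your write-up overstates how much the density bound and Lemma~\ref{existenceminimal} accomplish on their own.
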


For larger lengths, we used a computer to enumerate all the minimal coverings of length $5$ or $6$ (see the Appendix, Table \ref{table1}). From the data of this table, we extract the following information necessary to our proofs.

\begin{lemma}
\label{length5} 
There are exactly 9 minimal coverings with length equal to 5. Among these coverings:
\begin{enumerate}
\item There is no minimal covering with all the indices of the component lattices divisible by~$4$.
\item	There is no minimal covering with at least one component with index divisible by~$5$.
\end{enumerate}
\end{lemma}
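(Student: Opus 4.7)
The plan is to reduce Lemma \ref{length5} to a finite computer enumeration (described in detail in Appendix \ref{a1}), run the resulting algorithm to obtain a complete list of length-$5$ minimal coverings, and then verify assertions (1) and (2) by inspection of Table \ref{table1}. The count ``9'' and the two divisibility assertions are then inspection statements once the enumeration is in hand.

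First I would derive an a priori upper bound $M$ on the indices of the component lattices of any minimal covering $\mathcal{C} = \{\Lambda_1, \ldots, \Lambda_5\}$. Lemma \ref{sumofdensities} gives $\sum_{i=1}^5 [\Z^2 : \Lambda_i]^{-1} > 1$, so at least one index is $\leq 4$. The non-trivial work is bounding the \emph{largest} index: using Lemma \ref{existenceminimal} together with the classifications of length $3$ and $4$ (Lemmas \ref{length3} and \ref{length4}), one argues that if some $[\Z^2 : \Lambda_i]$ is too large, then replacing $\Lambda_i$ by a suitable proper sublattice of controlled index still yields a covering, from which one can extract a minimal covering of length $\leq 4$; the divisibility relation \eqref{indexdividesindex} then confines $[\Z^2 : \Lambda_i]$ to an explicit finite set.

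Second, I would parametrize all candidate lattices with index $\leq M$ by their Hermite normal form
$$
\Lambda = \begin{pmatrix} a & 0 \\ b & c \end{pmatrix}, \quad a,c \geq 1,\ 0 \leq b < a,
$$
which gives a finite and explicit list. For each unordered $5$-tuple of pairwise non-nested candidates satisfying the density inequality, the covering property can be tested on the finite quotient $\Z^2/L$ where $L$ is any common sublattice (for instance $L = \bigcap_i \Lambda_i$); minimality is then tested by checking, for each $i$ and each prime $p$, that no index-$p$ proper sublattice of $\Lambda_i$ can replace $\Lambda_i$. The enumeration will yield precisely the $9$ coverings of Table \ref{table1}. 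Inspecting that table, in no line are all five indices divisible by $4$, and in no line does any index appear as a multiple of $5$, which proves (1) and (2).

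The main obstacle is making the bound $M$ tight enough to keep the enumeration tractable, since the naive search space over $5$-tuples of sublattices grows rapidly. Producing $M$ requires a careful case analysis of how extraction of a shorter minimal covering interacts with the catalogs of length $3$ and $4$, which is precisely the content of Appendix \ref{a1}.
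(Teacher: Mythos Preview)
Your overall plan---reduce to a finite computer enumeration, produce Table \ref{table1}, and read off the count and the two divisibility claims---is exactly what the paper does. Where you diverge from the paper is in \emph{how} you make the enumeration finite, and here your sketch has a genuine gap and also mischaracterizes Appendix~\ref{a1}.

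You propose to derive an a priori bound $M$ on $[\Z^2:\Lambda_i]$ by arguing that ``if some $[\Z^2:\Lambda_i]$ is too large, then replacing $\Lambda_i$ by a suitable proper sublattice of controlled index still yields a covering, from which one can extract a minimal covering of length $\le 4$.'' This step is backwards: a proper sublattice $\Lambda_i'\subsetneq\Lambda_i$ makes the union \emph{smaller}, so there is no reason the new family covers; and if it did cover, the original $\mathcal C$ would not have been minimal in the first place. The divisibility relation \eqref{indexdividesindex} goes the wrong way for this argument. One \emph{can} prove an a priori bound (for instance via $\Lambda_i=\langle\Z^2\setminus\bigcup_{j\ne i}\Lambda_j\rangle\supseteq\bigcap_{j\ne i}\Lambda_j$ and an inductive control on the other indices), but that is not what you wrote, and it is not ``precisely the content of Appendix~\ref{a1}.''

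The paper's Appendix~\ref{a1} takes a different route altogether: it fixes an explicit finite set $P\subset\Z^2$ of test points (listed in \eqref{eP}) and recursively builds tuples $(L_1,\dots,L_6)$ by deciding, for each successive $p\in P$, which $L_i$ should contain $p$. The finiteness comes not from an index bound but from the fact (Theorem~A.2) that any $6$-tuple of subgroups covering $P$ already covers $\Z^2$; this theorem is itself certified by the algorithm terminating without running past the end of $P$. The resulting list is then pruned to the $54$ minimal coverings of lengths $3$--$6$ in Table~\ref{table1}, among which exactly $9$ have length $5$, and inspection of those $9$ rows gives (1) and (2). So your conclusion is right, but you should either repair the index-bound argument or adopt the paper's point-list recursion; as written, your finiteness step does not stand.
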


\begin{lemma}
\label{length6}
There are exactly 40 minimal coverings with length equal to 6. Among these coverings, there are only two minimal coverings with all the indices of the components greater or equal to $4$. These are
\begin{align*}
\mathcal C_{4^6} &:=
\left\{
\begin{pmatrix}
1 & 0 \\
0 & 4
\end{pmatrix},
\begin{pmatrix}
4 & 0 \\ 
0 & 1
\end{pmatrix},
\begin{pmatrix}
1 & 0 \\ 
1 & 4
\end{pmatrix},
\begin{pmatrix}
1 & 0 \\ 
3 & 4
\end{pmatrix},
\begin{pmatrix}
1 & 0 \\ 
2 & 4
\end{pmatrix},
\begin{pmatrix}
2 & 0 \\ 
1 & 2
\end{pmatrix}
\right\} \\
\mathcal C_{5^6} &:=
\left\{
\begin{pmatrix}
1 & 0 \\
0 & 5
\end{pmatrix},
\begin{pmatrix}
5& 0 \\ 
0 & 1
\end{pmatrix},
\begin{pmatrix}
1 & 0 \\ 1 & 5
\end{pmatrix},
\begin{pmatrix}
1 & 0 \\ 
4 & 5
\end{pmatrix},
\begin{pmatrix}
1 & 0 \\ 
2 & 5
\end{pmatrix},
\begin{pmatrix}
1& 0 \\ 
3 & 5
\end{pmatrix}
\right\}.
\end{align*}
Furthermore, $\mathcal C_{5^6}$ is the unique minimal covering with length $6$ comprised of at least one component with index divisible by some prime $\geq 5$. 
\end{lemma}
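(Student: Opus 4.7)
The plan is to reduce this statement to an exhaustive enumeration carried out by the algorithm of Appendix \ref{a2}, whose output is tabulated in Table \ref{table1}. The first step is to show that the search space is finite and to give effective bounds for the enumeration. From Lemma \ref{sumofdensities} applied to a minimal covering $\{\Lambda_1, \dots, \Lambda_6\}$ of length six we get $\sum_{i=1}^{6} [\Z^2 : \Lambda_i]^{-1} > 1$, which already forces the smallest index to be at most $5$. Iterating this idea on the region left uncovered once we fix a smallest-index component, and using the divisibility relation \eqref{indexdividesindex} together with the catalog of minimal coverings of length $\leq 5$ from Lemmas \ref{length3}, \ref{length4}, \ref{length5}, yields strong constraints on the remaining indices. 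This is what makes the enumeration tractable and is the core of Appendix \ref{a2}.

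The second step is to run the enumeration. The algorithm produces the forty minimal coverings of length six listed in Table \ref{table1}, which settles the cardinality assertion. The remaining claims are then read off the table: inspection shows that exactly two coverings have all six indices $\geq 4$, namely $\mathcal C_{4^6}$ and $\mathcal C_{5^6}$, and that $\mathcal C_{5^6}$ is the only one with a component whose index is divisible by a prime $\geq 5$.

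As a sanity check, one can verify by hand that $\mathcal C_{4^6}$ and $\mathcal C_{5^6}$ are indeed minimal coverings. In $\mathcal C_{5^6}$ the six component lattices are precisely the preimages in $\Z^2$ of the six lines through the origin of $(\FF_5)^2$, so their union is $\Z^2$; for each such line one exhibits a primitive vector reducing to exactly that line modulo $5$, which lies in no other component and thus witnesses that the component cannot be shrunk to a proper sublattice. An entirely analogous verification handles $\mathcal C_{4^6}$, with slightly more elaborate bookkeeping modulo $4$, where the last component $\bigl(\begin{smallmatrix} 2 & 0 \\ 1 & 2 \end{smallmatrix}\bigr)$ takes care of the points not reached by preimages of the slopes in $\Z/4\Z$.

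The main obstacle is not conceptual but combinatorial: the naive search at length $6$ is well beyond hand-check, so the real work lies in the branch-and-bound design of Appendix \ref{a2}, where the pruning by Lemma \ref{sumofdensities}, by the divisibility condition from Lemma \ref{existenceminimal}, and by the catalogs of minimal coverings of smaller length from Lemmas \ref{length3}, \ref{length4}, \ref{length5} is what makes the enumeration complete in reasonable time.
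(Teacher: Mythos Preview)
Your approach is essentially the paper's: the lemma is obtained by reading off the claims from the computer enumeration in the Appendices (the algorithm is in Appendix~\ref{a1}, the output in Table~\ref{table1} of Appendix~\ref{a2}). That part is fine, and your sanity checks on $\mathcal C_{4^6}$ and $\mathcal C_{5^6}$ are a nice addition.

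However, your description of \emph{how} the algorithm works is inaccurate. The routine \texttt{FindLattice} in Appendix~\ref{a1} does not prune via Lemma~\ref{sumofdensities}, the divisibility relation~\eqref{indexdividesindex}, or the catalogs of shorter minimal coverings. Instead it fixes a finite list $P$ of test points (equation~\eqref{eP}) and recursively assigns each uncovered point of $P$ to one of the six lattices, enlarging that lattice accordingly; termination and completeness rest on the fact (proved there) that any six sublattices covering all of $P$ already cover $\Z^2$. The finiteness argument you sketch in your first paragraph is plausible in spirit but is not the one actually carried out in the paper. Since your proof ultimately defers to the Appendix, this does not create a logical gap, but you should either correct the description or simply cite the Appendix without paraphrasing its internals.
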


\subsection{Links with extraordinary forms}
We recall the definition and some straightforward properties of an isomorphism between two forms. 

\begin{definition}
\label{Def2.2} 
Let $F_1$ and $F_2$ be two forms in ${\rm Bin}(d, \Q)$. An isomorphism from $F_1$ to $F_2$ is an element $\rho \in {\rm GL}(2, \Q)$ such that $F_1 \circ \rho = F_2$. The set of all such isomorphisms is denoted by ${\rm Isom}(F_1 \rightarrow F_2, \Q)$. Suppose ${\rm Isom}(F_1 \rightarrow F_2, \Q)$ is not empty and let $\rho$ be one of its elements. Then we have the equalities
\begin{align*}
{\rm Isom}(F_1 \rightarrow F_2, \Q) &= \rho \cdot {\rm Aut}(F_2, \Q) = {\rm Aut}(F_1, \Q) \cdot \rho \\
{\rm Isom}(F_2 \rightarrow F_1, \Q) &= \rho^{-1} \cdot {\rm Aut}(F_1, \Q) = {\rm Aut}(F_2, \Q) \cdot \rho^{-1}.
\end{align*}
\end{definition}

The following key proposition is \cite[Prop. 2.5]{FKD4} (see also the remarks \cite[Comments 2.6]{FKD4}). The equality \eqref{twocoverings} exhibits two coverings of $\Z^2$ that we will deeply investigate in the context of the hypothesis $(C3)$.

\begin{prop}
\label{essential} 
Let $d\geq 3$ and let $(F_1, F_2)$ be a pair of linked extraordinary forms. Then there exists $\rho \in {\rm GL}(2, \Q)$, a pair of linked extraordinary forms $(G_1, G_2)$ and a pair $(D, \nu)$ of positive integers such that
\begin{enumerate}
\item\label{467} we have $F_1 = F_2 \circ \rho$,

\item\label{468} we have
$$
\Bigl(G_1 \sim_{{\rm GL}(2,\Z)} F_1 \textup{ and } G_2 \sim_{{\rm GL}(2,\Z)} F_2\Bigr) \textup{ or } \Bigl(G_1 \sim_{{\rm GL}(2,\Z)} F_2 \textup{ and } G_2\sim_{{\rm GL}(2,\Z)} F_1\Bigr),
$$

\item we have
\begin{equation}
\label{conditionsforDandnu}
D, \nu \geq 1, D \nu > 1, D \mid \nu \textup{ and } 1 \leq \nu \leq D^2
\end{equation}
and the matrix 
\begin{equation}
\label{defgamma}
\gamma := \begin{pmatrix} D & 0\\ 0 & D/\nu
\end{pmatrix}
\end{equation}
satisfies $G_1 = G_2 \circ \gamma$,

\item we have
\begin{equation}
\label{gammaminimal}
[\Z^2 : L (\gamma)] = \min \left\{[\Z^2 : L( \tau)] : \tau \in {\rm Isom}(G_1 \rightarrow G_2, \Q) \cup {\rm Isom}(G_2 \rightarrow G_1, \Q)\right\},
\end{equation}

\item and finally, we have the two coverings
\begin{equation}
\label{twocoverings}
\Z^2 = \bigcup_{\tau \in {\rm Isom}(G_1 \rightarrow G_2, \Q)} L(\tau) = \bigcup_{\tau \in {\rm Isom}(G_2 \rightarrow G_1, \Q)} L(\tau).
\end{equation}
\end{enumerate}
\end{prop}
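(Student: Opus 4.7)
My plan is to build the data $\rho, G_1, G_2, D, \nu, \gamma$ in three stages, each invoking a structural fact on binary forms together with a minimization argument.

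First, I would upgrade the value-set identity $F_1(\mathbb{Z}^2) = F_2(\mathbb{Z}^2)$ to a genuine $\mathrm{GL}(2,\mathbb{Q})$-equivalence via the Fouvry--Waldschmidt theorem \cite{FW}: for linked extraordinary forms of degree $d\geq 3$ with nonzero discriminant, the coincidence of value sets forces an honest $\mathrm{GL}(2,\mathbb{Q})$-equivalence. This produces $\rho \in \mathrm{GL}(2,\mathbb{Q})$ with $F_1 = F_2 \circ \rho$, giving item \eqref{467}; since $F_1 \not\sim_{\mathrm{GL}(2,\mathbb{Z})} F_2$, the matrix $\rho$ is not integral.

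Second, I would reduce $\rho$ to the canonical diagonal shape \eqref{defgamma} modulo the $\mathrm{GL}(2,\mathbb{Z})$-action on source and target. Clearing a common denominator and applying Smith normal form yields $\rho = U \cdot \mathrm{diag}(\alpha, \beta) \cdot V$ with $U, V \in \mathrm{GL}(2, \mathbb{Z})$ and $\alpha, \beta \in \mathbb{Q}_{>0}$; setting $G_2 := F_2 \circ U$ and $G_1 := F_1 \circ V^{-1}$ then yields $G_1 = G_2 \circ \mathrm{diag}(\alpha, \beta)$ with $G_i \sim_{\mathrm{GL}(2, \mathbb{Z})} F_i$. A further two-sided normalization (sign flips and a possible coordinate swap), together with the canonical form provided by Lemma \ref{Standard}, puts this diagonal into the shape $\gamma = \mathrm{diag}(D, D/\nu)$, with the integrality and divisibility relations of \eqref{conditionsforDandnu} (except the last) coming out as bookkeeping consequences and $D\nu > 1$ being forced by $\rho \notin \mathrm{GL}(2, \mathbb{Z})$. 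To obtain the bound $\nu \leq D^2$ and the minimality property \eqref{gammaminimal} simultaneously, I would choose $\gamma$ to realize $\min\{[\mathbb{Z}^2 : L(\tau)] : \tau \in \mathrm{Isom}(G_1 \to G_2, \mathbb{Q}) \cup \mathrm{Isom}(G_2 \to G_1, \mathbb{Q})\}$; a direct calculation gives $[\mathbb{Z}^2 : L(\gamma)] = \nu/D$ and $[\mathbb{Z}^2 : L(\gamma^{-1})] = D$, so the minimality of $L(\gamma)$ forces $\nu/D \leq D$. If instead the minimum is attained in the opposite direction of isomorphism, one simply swaps $G_1$ and $G_2$, which triggers the ``or'' alternative in item \eqref{468}.

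Finally, for the covering \eqref{twocoverings}, fix $(x, y) \in \mathbb{Z}^2$. Since $G_2(x, y) \in G_2(\mathbb{Z}^2) = G_1(\mathbb{Z}^2)$, there exists $(u, v) \in \mathbb{Z}^2$ with $G_1(u, v) = G_2(x, y) = G_1(\gamma^{-1}(x, y))$. Using the structural rigidity of level curves of a degree-$d$ form with nonzero discriminant (again drawing on \cite{FW}), one concludes $\sigma(\gamma^{-1}(x, y)) = (u, v)$ for some $\sigma \in \mathrm{Aut}(G_1, \mathbb{Q})$. Setting $\tau := \sigma \gamma^{-1}$, Definition \ref{Def2.2} shows $\tau \in \mathrm{Isom}(G_1 \to G_2, \mathbb{Q})$ and $\tau(x, y) = (u, v) \in \mathbb{Z}^2$, giving $(x, y) \in L(\tau)$; the second union is handled symmetrically. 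The main obstacle I foresee is precisely this last orbit step, namely that $\mathrm{Aut}(G_1, \mathbb{Q})$ acts transitively on the relevant $\mathbb{Q}$-rational points of the level curves; this rigidity lies at the heart of \cite{FW} and is what forces the hypotheses $d \geq 3$ and nonzero discriminant.
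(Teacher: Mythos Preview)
The paper does not prove this proposition here but imports it from \cite[Prop.~2.5]{FKD4}. Your construction for items~\ref{467}--4 is the standard one and matches what is done in \cite{FK1,FKD4}; the only organisational tweak is that you should pick a $\tau$ of minimal index in $\mathrm{Isom}(F_1\to F_2,\Q)\cup\mathrm{Isom}(F_2\to F_1,\Q)$ \emph{before} applying Smith normal form, so that the resulting diagonal $\gamma$ (whose index is unchanged under left and right multiplication by elements of $\GL$) already realises the minimum, and then $\nu/D=[\Z^2:L(\gamma)]\le[\Z^2:L(\gamma^{-1})]=D$ gives $\nu\le D^2$.

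Your argument for the covering \eqref{twocoverings}, however, has a genuine gap. The step ``$G_1(u,v)=G_1(\gamma^{-1}(x,y))$ forces $(u,v)=\sigma(\gamma^{-1}(x,y))$ for some $\sigma\in\Aut(G_1,\Q)$'' asserts transitivity of $\Aut(G_1,\Q)$ on rational level sets of $G_1$, and this is false: for $G_1=X^3+Y^3$ one has $\Aut(G_1,\Q)=\{\mathrm{id},S\}$, yet $(1,12)$ and $(9,10)$ both represent $1729$ and lie in distinct orbits. Nothing in \cite{FW} asserts such pointwise transitivity. The proofs in \cite{FK1,FKD4} instead obtain \eqref{twocoverings} by a density argument: $\bigcup_\tau L(\tau)$ is a finite union of sublattices of $\Z^2$, so if it were proper its complement would contain a full coset of $\bigcap_\tau L(\tau)$ and hence have positive density; the counting results of \cite{FW} on the number of integers represented up to $N$ (together with control on how rarely a value admits representations outside a single $\Aut$-orbit) then yield a contradiction with $G_1(\Z^2)=G_2(\Z^2)$. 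So \cite{FW} is indeed the key input, but it enters through asymptotics, not through an orbit statement on individual level curves.
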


\noindent The definition of $\gamma$ directly implies the equality 
\begin{equation}
\label{747}
[\Z^2 : L(\gamma)] = \nu/D.
\end{equation}

\section{\texorpdfstring{Beginning of the proof of Theorem \ref{sourcebis}}{Beginning of the proof of Theorem 1.4}}
\subsection{The two coverings}
\label{2coverings}
We start from a pair of linked extraordinary forms $(F_1, F_2)$, such that ${\rm Aut}(F_1, \Q) $ is ${\rm GL}(2, \Q)$--conjugate with ${\bf D}_3$ or with ${\bf D}_6$ (condition $(C3)$). Let $(G_1, G_2)$ be a pair of linked extraordinary forms, whose existence is assured by Proposition \ref{essential}. By the conjugation formula \eqref{conjugation} and by the items \ref{467}. and \ref{468}. of Proposition \ref{essential} we deduce that
$$
{\rm Aut}(F_1, \Q), \ 
{\rm Aut}(F_2, \Q),\ {\rm Aut}(G_1, \Q) \text{ and } {\rm Aut}(G_2, \Q) \simeq_{{\rm GL}(2, \Q)} {\bf D}_3 \text{ or } {\bf D}_6.
$$
So we have
\begin{equation}
\label{twocasesbis}
{\rm Aut}(G_2, \Q) = 
\begin{cases}
T_2^{-1} {\bf D}_3T_2\\
\text { or }\\
T_2^{-1} {\bf D}_3T_2 \ \bigcup \ \left(-T_2^{-1} {\bf D}_3T_2\right),
\end{cases}
\end{equation} 
where $T_2$ is some invertible matrix where the entries $t_i$ are coprime integers
\begin{equation}
\label{defT2bis}
T_2 := \begin{pmatrix} t_1 & t_2\\ t_3 & t_4
\end{pmatrix}.
\end{equation}
 
\begin{enumerate} 
\item[--] Suppose that we are in the first case of \eqref{twocasesbis}. By the definition \eqref{defgamma}, $\gamma$ belongs to ${\rm Isom}(G_2 \rightarrow G_1, \Q)$. We appeal to Definition \ref{Def2.2} and to the explicit description \eqref{D3=...} of the group ${\bf D}_3$ in order to split the double equality of \eqref{twocoverings} into
 
 \noindent $\bullet$ the {\it main covering}
 \begin{equation}
 \label{maincovering}
 \Z^2 = \bigcup_{\sigma \in \mathfrak S} \Lambda(\sigma),
 \end{equation}
 
\noindent $\bullet$ and the {\it dual covering}
\begin{equation}
\label{dualcovering}
\Z^2 = \bigcup_{\sigma \in \mathfrak S}\Gamma (\sigma), 
\end{equation}
where 
\begin{equation}
\label{deffrakSbis}
\mathfrak S := {\rm Aut}(G_2, \Q)=
\{ {\rm id}, \, T_2^{-1} RT_2 , \, T_2^{-1} R^2T_2, , \, T_2^{-1} ST_2 , \, T_2^{-1} RST_2, \, T_2^{-1} R^2ST_2\},
\end{equation}

\begin{equation}
\label{defLambdabis}
\Lambda(\sigma) = L(\gamma^{-1} \sigma) =\{ \boldsymbol x \in \Z^2 : \gamma^{-1} \sigma (\boldsymbol x )\in \Z^2\},
\end{equation}
and
\begin{equation}
\label{defGammabis}
\Gamma (\sigma) = L(\sigma \gamma)=\{ \boldsymbol x \in \Z^2 : \sigma\gamma (\boldsymbol x )\in \Z^2\}.
\end{equation}

\item[--] Suppose that we are in the second case of \eqref{twocasesbis}. Then the equalities \eqref{twocoverings} will lead to two coverings of twelve lattices each. But the equality \eqref{obvious0} shows that each lattice appears twice, thus giving coverings consisting of six lattices. In conclusion, the equalities \eqref{maincovering} and \eqref{dualcovering} are true as soon as any case of \eqref{twocasesbis} holds.
\end{enumerate}

\noindent By \eqref{gammaminimal} and \eqref{747}, we have 
\begin{equation}
\label{798}
[\Z^2: \Lambda(\sigma)], \ [\Z^2 : \Gamma (\sigma)] \geq \nu/D \ (\sigma \in \mathfrak S).
\end{equation}
A useful lemma is the following

\begin{lemma}
\label{atmostfive} 
Suppose that the coverings \eqref{maincovering} and \eqref{dualcovering} exist. Then we have the inequality 
$$
\nu/D \leq 5.
$$
\end{lemma}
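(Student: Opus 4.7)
The proof should be a direct application of Lemma \ref{sumofdensities} together with the index bound \eqref{798}. The plan is the following.

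First, I would observe that the main covering \eqref{maincovering} is indexed by $\mathfrak{S}$, which, by \eqref{deffrakSbis} and by the reduction discussed for the second case of \eqref{twocasesbis}, has cardinality at most $6$ (in the ${\bf D}_6$ case the twelve lattices collapse to six via \eqref{obvious0}). Hence the main covering consists of at most six (not necessarily distinct) lattices $\Lambda(\sigma)$.

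Second, I would apply the density inequality of Lemma \ref{sumofdensities} to this covering. Combined with the lower bound \eqref{798}, namely $[\Z^2 : \Lambda(\sigma)] \geq \nu/D$ for every $\sigma \in \mathfrak S$, this yields
$$
1 < \sum_{\sigma \in \mathfrak S} \frac{1}{[\Z^2 : \Lambda(\sigma)]} \leq 6 \cdot \frac{D}{\nu},
$$
so that $\nu/D < 6$.

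Finally, by \eqref{747}, the ratio $\nu/D$ equals the index $[\Z^2 : L(\gamma)]$, so it is a positive integer. The strict inequality $\nu/D < 6$ therefore forces $\nu/D \leq 5$, which is the desired conclusion. There is no serious obstacle here: once Lemma \ref{sumofdensities} is in hand and one has identified that $\mathfrak S$ contributes at most $6$ lattices, the argument is a one-line counting estimate, and the integrality of $\nu/D$ from \eqref{747} upgrades the bound from $<6$ to $\leq 5$.
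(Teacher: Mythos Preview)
Your proposal is correct and follows essentially the same argument as the paper: both apply Lemma \ref{sumofdensities} to the six-component covering \eqref{maincovering}, use the index lower bound \eqref{798} to obtain $6D/\nu > 1$, and then invoke the integrality of $\nu/D$ to conclude $\nu/D \leq 5$. The only difference is that you spell out the integrality via \eqref{747}, whereas the paper simply remarks that $\nu/D$ is an integer.
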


\begin{proof} 
Indeed, these two coverings have six components. By \eqref{798}, each component has index $\geq \nu/D$, which is an integer. By Lemma \ref{sumofdensities}
we have $6D/\nu >1$ and we complete the proof.
\end{proof}

\subsection{A particular case already treated}
Suppose that
\[
\bigcup_{\sigma \in \{\text{id}, T_2^{-1}RT_2, T_2^{-1}R^2T_2\}} \Lambda(\sigma) = \Z^2.
\]
This situation has already been treated in \cite[\S 11]{FK1} when the automorphism group is ${\bf C}_3$ or ${\bf C}_6$. It leads to the values $D = 2$ and $\nu = 2$ and ${\rm Aut}(G_2, \Q) \subseteq \GL$ (see \cite[(11.1)]{FK1}). Hence Theorem \ref{sourcebis} is proved in that particular case. So, henceforth, we assume that 
\begin{equation}
\label{backtoC3}
\bigcup_{\sigma \in \{\text{id}, T_2^{-1}RT_2, T_2^{-1}R^2T_2\}} \Lambda(\sigma) \neq \Z^2.
\end{equation}

\subsection{Proper lattices}
We will first prove that the covering \eqref{maincovering} is never the trivial covering, by which we mean that one of the component lattices equals $\Z^2$.

\begin{lemma} 
We adopt the hypotheses of Proposition \ref{essential} and the notations of \S \ref{2coverings}. Then for every $\sigma\in \mathfrak S$, the lattice $\Lambda(\sigma)$ is proper.
\end{lemma}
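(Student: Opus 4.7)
The plan is to argue by contradiction. Suppose $\Lambda(\sigma_0) = \Z^2$ for some $\sigma_0 \in \mathfrak{S}$. Then by the equivalence \eqref{obvious} applied to $\gamma^{-1}\sigma_0$, the matrix $\tau := \gamma^{-1}\sigma_0$ must have integer entries. The first step is to recognize that $\tau$ is itself an isomorphism from $G_1$ to $G_2$: since $\gamma \in \mathrm{Isom}(G_2 \to G_1, \Q)$ by construction, we have $\gamma^{-1} \in \mathrm{Isom}(G_1 \to G_2, \Q)$, and then Definition \ref{Def2.2} applied with $\rho = \gamma^{-1}$ gives
$$
\mathrm{Isom}(G_1 \to G_2, \Q) = \gamma^{-1} \cdot \mathrm{Aut}(G_2, \Q),
$$
so indeed $\tau \in \mathrm{Isom}(G_1 \to G_2, \Q)$.

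The second step is to exploit the determinant. A direct computation on the generators shows that every element of $\mathbf{D}_3$ and $\mathbf{D}_6$ has determinant $\pm 1$, and since $\mathrm{GL}(2, \Q)$-conjugation preserves determinants, $\det \sigma_0 = \pm 1$. Using $\det \gamma = D^2/\nu$, we obtain
$$
\det \tau = \frac{\det \sigma_0}{\det \gamma} = \pm \frac{\nu}{D^2}.
$$
Since $\tau$ is integral, $\det \tau \in \Z$, so $D^2 \mid \nu$. Combined with the bound $\nu \leq D^2$ built into \eqref{conditionsforDandnu}, this forces $\nu = D^2$ and $\det \tau = \pm 1$, i.e., $\tau \in \GL$.

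The third step concludes: $\tau \in \GL \cap \mathrm{Isom}(G_1 \to G_2, \Q)$ is precisely the statement that $G_1 \sim_{\GL} G_2$, which contradicts the assumption that $(G_1, G_2)$ is a pair of linked extraordinary forms (and therefore not $\GL$-equivalent by the definition of linkedness).

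No serious obstacle arises. The essential trick is to route the argument through the determinant of $\gamma^{-1}\sigma_0$ and use the inequality $\nu \leq D^2$ supplied by Proposition \ref{essential}. An alternative path would invoke the minimality property \eqref{gammaminimal} directly: if $L(\tau) = \Z^2$, then $[\Z^2 : L(\gamma)] \leq [\Z^2 : L(\tau)] = 1$, so $L(\gamma) = \Z^2$, which via \eqref{obvious} forces $\nu \mid D$; together with $D \mid \nu$ this yields $\nu = D$, and then the determinant equation $\det(\gamma\tau) = \det \sigma_0 = \pm 1$ applied to $\sigma_0 = \gamma\tau$ gives $D = 1$, contradicting $D\nu > 1$.
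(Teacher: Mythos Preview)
Your proof is correct. Your main argument differs slightly from the paper's: you bound $\det\tau = \pm\nu/D^2$ directly using the inequality $\nu \le D^2$ from \eqref{conditionsforDandnu}, whereas the paper first invokes the minimality property \eqref{gammaminimal} to conclude that $\gamma$ itself is integral, then factors $\sigma = \gamma\cdot(\gamma^{-1}\sigma)$ as a product of two integral matrices whose determinants multiply to $\pm 1$. The alternative path you sketch at the end is in fact exactly the paper's argument. Both routes are short and rely on the same determinant trick; your main version avoids appealing to \eqref{gammaminimal} and instead exploits the explicit constraint $\nu \le D^2$, which is arguably cleaner here.
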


\begin{proof} 
We give a proof by contradiction. Suppose that there exists some $\sigma \in \mathfrak S$ satisfying
$$
[\Z^2 : \Lambda(\sigma)] = [\Z^2 : L(\gamma^{-1}\sigma)] = 1.
$$
By \eqref{obvious}, we deduce that $\gamma^{-1} \sigma$ has integral coefficients. By \eqref{gammaminimal}, we have $[\Z^2 : L(\gamma)] = 1$, thus $\gamma$ also has integral coefficients. Consider the equality $\sigma = \gamma \cdot (\gamma^{-1} \cdot \sigma)$. Since $\det \sigma =\pm 1$ and since $\det \gamma$ and $\det (\gamma^{-1}\sigma)$ are integers, we have $\det \gamma =\pm 1$. But then $G_1$ and $G_2$ are $\GL$--equivalent, contradiction.
\end{proof}

The strategy now will be to exploit the main covering \eqref{maincovering} as long as possible to restrict the possible values of $D$ and $\nu$. When we are at an impasse, we will study the dual covering \eqref{dualcovering}. One indication of the additional complexities of the present paper is by comparison with \cite{FK1} and \cite{FKD4}, where we did not investigate the dual covering.

Throughout the paper, we define the positive integer $d_2$ by
\begin{equation}
\label{defd2}
d_2 =\vert \det T_2\vert= \vert t_1t_4 -t_2t_3\vert,
\end{equation}
where $T_2$ is the conjugation matrix defined in \eqref{twocasesbis} and \eqref{defT2bis}. We also introduce the numbers
\begin{equation}
\label{defgij}
g_{1, 3} := \gcd (t_1, t_3), \quad g_{2, 4} := \gcd (t_2, t_4).
\end{equation}
They satisfy the relations
\begin{equation}
\label{propgij}
\gcd (g_{1, 3}, g_{2, 4}) = 1 \quad \text{ and } \quad g_{1, 3} g_{2, 4} \mid d_2.
\end{equation}

\subsection{\texorpdfstring{Description of the lattices $\Lambda$ by their equations}{Description of the lattices Lambda}}
Recalling the notations \eqref{defgamma}, \eqref{deffrakSbis} and \eqref{defLambdabis} we have

\begin{lemma}
\label{lLatticeEquationsbis} 
The lattices $\Lambda(\sigma)$ for $\sigma \in \mathfrak S$ are defined by the following systems of equations with $\boldsymbol x = (x_1, x_2) \in \Z^2$:
\begin{equation}
\label{Idbis}
\Lambda(\textup{id}):
\begin{cases}
x_1 &\equiv 0 \bmod D \\
\nu x_2 &\equiv 0 \bmod D,
\end{cases}
\end{equation}
\begin{equation}
\label{Rbis}
\Lambda(T_2^{-1}RT_2):
\begin{cases}
(t_1t_2 + t_2t_3 + t_3t_4) x_1 + (t_2^2 + t_2t_4 + t_4^2) x_2 &\equiv 0 \bmod d_2 D \\
\nu ((t_1^2 + t_1t_3 + t_3^2) x_1 + (t_1t_2 + t_1t_4 + t_3t_4) x_2) &\equiv 0 \bmod d_2 D,
\end{cases}
\end{equation}
\begin{equation}
\label{R2bis}
\Lambda(T_2^{-1}R^2T_2) :
\begin{cases}
(t_1t_2 + t_1t_4 + t_3t_4) x_1 + (t_2^2 + t_2t_4 + t_4^2) x_2 &\equiv 0 \bmod d_2 D \\
\nu ((t_1^2 + t_1t_3 + t_3^2) x_1 + (t_1t_2 + t_2t_3 + t_3t_4) x_2) &\equiv 0 \bmod d_2 D,
\end{cases}
\end{equation}
\begin{equation}
\label{Sbis}
\Lambda(T_2^{-1}ST_2):
\begin{cases}
(t_3t_4 - t_1t_2) x_1 + (t_4^2 - t_2^2) x_2 &\equiv 0 \bmod d_2 D \\
\nu ((t_3^2 - t_1^2) x_1 + (t_3t_4 - t_1t_2) x_2) &\equiv 0 \bmod d_2 D,
\end{cases}
\end{equation}
\begin{equation}
\label{RSbis}
\Lambda(T_2^{-1}RST_2):
\begin{cases}
(t_1t_2 + t_1t_4 + t_2t_3) x_1 + (t_2^2 + 2t_2t_4) x_2 &\equiv 0 \bmod d_2 D \\
\nu ((t_1^2 + 2t_1t_3) x_1 + (t_1t_2 + t_1t_4 + t_2t_3) x_2) &\equiv 0 \bmod d_2 D,
\end{cases}
\end{equation}
and finally
\begin{equation}
\label{R2Sbis}
\Lambda(T_2^{-1}R^2ST_2):
\begin{cases}
(t_1t_4 + t_2t_3 + t_3t_4) x_1 + (t_4^2 + 2t_2t_4) x_2 &\equiv 0 \bmod d_2 D \\
\nu ((t_3^2 + 2t_1t_3) x_1 + (t_1t_4 + t_2t_3 + t_3t_4) x_2) &\equiv 0 \bmod d_2 D.
\end{cases}
\end{equation}
\end{lemma}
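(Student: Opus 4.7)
The lemma is established by direct computation of the matrix $\gamma^{-1}\sigma$ for each of the six elements $\sigma \in \mathfrak S$, followed by reading off the defining congruences from the entries. The plan is to systematically reduce the problem to six $2 \times 2$ matrix multiplications, then use \eqref{defLambdabis} to translate the condition $\gamma^{-1}\sigma(\boldsymbol x) \in \Z^2$ into two linear congruences in $x_1, x_2$.

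First I would treat the identity case. Since $\gamma^{-1} = \begin{pmatrix} 1/D & 0 \\ 0 & \nu/D \end{pmatrix}$, the condition $\gamma^{-1}(x_1, x_2)^T \in \Z^2$ is immediately $D \mid x_1$ and $D \mid \nu x_2$, yielding \eqref{Idbis}. For the remaining five cases, write $\sigma = T_2^{-1} M T_2$ with $M \in \{R, R^2, S, RS, R^2 S\}$ and use the adjugate formula
$$
T_2^{-1} = \frac{\varepsilon}{d_2}\begin{pmatrix} t_4 & -t_2 \\ -t_3 & t_1 \end{pmatrix}, \qquad \varepsilon := \mathrm{sgn}(\det T_2) \in \{\pm 1\}.
$$
Then the product $\gamma^{-1} T_2^{-1} M T_2$ has the common denominator $d_2 D$, and $\gamma^{-1}\sigma (\boldsymbol x)\in \Z^2$ if and only if each of the two entries of the numerator vector vanishes modulo $d_2 D$. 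The overall sign $\varepsilon$ is irrelevant for the congruences, and the minus signs appearing inside the rows (coming from the lower row of $\gamma^{-1} T_2^{-1}$ and from the images of $R, R^2, RS, R^2S$) can be absorbed by negating one side of a congruence, which changes nothing modulo $d_2 D$.

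Next I would carry out the six multiplications explicitly. For instance, $RT_2 = \begin{pmatrix} t_3 & t_4 \\ -t_1-t_3 & -t_2-t_4 \end{pmatrix}$; multiplying on the left by $\tfrac{\varepsilon}{d_2}\begin{pmatrix} t_4 & -t_2 \\ -t_3 & t_1 \end{pmatrix}$ gives
$$
T_2^{-1}RT_2 = \frac{\varepsilon}{d_2}\begin{pmatrix} t_1t_2+t_2t_3+t_3t_4 & t_2^2+t_2t_4+t_4^2 \\ -(t_1^2+t_1t_3+t_3^2) & -(t_1t_2+t_1t_4+t_3t_4) \end{pmatrix},
$$
and multiplying by $\gamma^{-1}$ and clearing signs recovers \eqref{Rbis}. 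The remaining four cases are handled by the same routine computation: $R^2 T_2$, $ST_2$, $RST_2$, $R^2 S T_2$ are each read off from the explicit forms of $R, R^2, S, RS, R^2 S$, and left-multiplication by $T_2^{-1}$ produces a matrix whose top-left entry gives the first coefficient of $x_1$, etc. Comparing term-by-term yields \eqref{R2bis}--\eqref{R2Sbis}.

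There is no conceptual obstacle; the entire argument is a mechanical verification. The only place where care is required is bookkeeping: one must track which entries carry an overall minus sign and which carry the sign $\varepsilon = \mathrm{sgn}(\det T_2)$, and confirm that both are harmless at the level of congruences modulo $d_2 D$ (this is essentially the content of \eqref{obvious0} applied row by row). Once that is noted, the six cases each reduce to a short polynomial calculation in $t_1, t_2, t_3, t_4$, and the stated congruences follow directly.
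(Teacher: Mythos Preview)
Your proposal is correct and is precisely the approach the paper takes: the paper's proof consists of the single sentence ``This follows from a direct computation,'' and what you have written is exactly that computation spelled out, with the sample case $\sigma = T_2^{-1}RT_2$ worked through in full and the sign bookkeeping (the $\varepsilon$ and the row negations) correctly noted as immaterial for congruences modulo $d_2 D$.
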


\begin{proof}
This follows from a direct computation.
\end{proof}

\begin{remark} 
\label{rLidSimple}
The condition $D \mid \nu$ appearing in \eqref{conditionsforDandnu} enables us to simplify these equations: we may delete the second equation of \eqref{Idbis} to obtain the equality $\Lambda({\rm id}) = \Lambda_D$ with 
$$
\Lambda_D := \{(x_1, x_2): x_1 \equiv 0 \bmod D\}.
$$
\end{remark}

To shorten notations, we use the symbol $\Lambda^?$ to denote an unspecified proper lattice (this symbol may denote different lattices at each occurrence). We also write
$$
{\mathfrak S}^\flat := \{\sigma \in \mathfrak S : \sigma \neq{\rm id}\} = \{T_2^{-1} RT_2 , \, T_2^{-1} R^2T_2, \, T_2^{-1} ST_2 , \, T_2^{-1} RST_2, \, T_2^{-1} R^2ST_2\}.
$$
Similarly, for $2 \leq k \leq 5$, we denote by
\begin{equation}
\label{defS<k}
\mathfrak S^\flat_{\leq k}
\end{equation}
some unspecified subset of $\mathfrak S^\flat$ with cardinality $\leq k$.

For $\sigma\in \mathfrak S^\flat$, we write each of these systems $\eqref{Rbis}, \ldots, \eqref{R2Sbis}$ in the form
\begin{equation}
\label{generalnotation}
\Lambda(\sigma):
\begin{cases}
p_{1, \sigma}({\bf t}) x_1 + p_{2, \sigma}({\bf t}) x_2 &\equiv 0 \bmod d_2D,\\
\nu \left(P_{1, \sigma}({\bf t}) x_1 + P_{2, \sigma}({\bf t}) x_2 \right) &\equiv 0 \bmod d_2D,
\end{cases}
\end{equation}
where ${\bf t} = (t_1, t_2, t_3, t_4)$. Notice that $p_{2, \sigma}({\bf t})$ is a polynomial in $t_2$ and $t_4$ only and that $P_{1, \sigma}({\bf t})$ is a polynomial in $t_1$ and $t_3$ only. Furthermore, we put
\begin{equation}
\label{generalnotation*}
\left(p_{1,{\rm id}}({\bf t}) , p_{2, {\rm id}}({\bf t})\right) = (1, 0).
\end{equation}

\begin{remark}
\label{representativematrix} 
The representative matrix ${\rm Mat}(\sigma)$ of the morphism $\sigma$, for $\sigma \in \mathfrak S^\flat$, is
$$
{\rm Mat}(\sigma) = \pm
\begin{pmatrix}
p_{1, \sigma}({\bf t})/d_2 & p_{2, \sigma}({\bf t})/d_2\\
P_{1, \sigma}({\bf t})/d_2 & P_{2, \sigma}({\bf t})/d_2
\end{pmatrix}. 
$$
In particular, ${\rm Mat}(\sigma)$ belongs to ${\rm GL}(2,\Z)$ if and only if $d_2$ divides $p_{i, \sigma}({\bf t})$ and $P_{i, \sigma}({\bf t})$ for $i \in \{1, 2\}$.
\end{remark}

\begin{remark}
\label{930} 
Fix $\nu =1$. Then each of the six systems $\eqref{Idbis}, \ldots \eqref{R2Sbis}$ defining the lattices $\Lambda(\sigma)$ benefit from the following symmetry: consider the change of parameters and variables
$$
\Theta : (t_1, \, t_2, \, t_3, \, t_4, \, x_1, \, x_2) \longleftrightarrow (t_2, \, t_1, \, t_4, \, t_3, \, x_2, \, x_1). 
$$
Then $\Theta$ permutes the two equations of each system.
\end{remark}

We will frequently work with the reduced variable $\tilde{\bf t}$ defined by the equality 
\begin{equation}
\label{defttilde}
\tilde{\bf t} := (t_1, \tilde t_2, t_3, \tilde t_4), \text{ with }
\tilde t_i = t_i/g_{2, 4} \ (i = 2,\, 4).
\end{equation}
By homogeneity, the first equation of \eqref{generalnotation} is equivalent to 
$$
p_{1, \sigma}(\tilde{\bf t}) x_1 + g_{2, 4}\,p_{2, \sigma}(\tilde{\bf t}) x_2 \equiv 0 \bmod d_2D/g_{2, 4}.
$$
For any divisor $\ell$ of $d_2D/g_{2, 4}$ we introduce the lattice $\tilde{\Lambda}^{(\ell)}(\sigma)$ defined by 
\begin{equation}
\label{954}
\tilde{\Lambda}^{(\ell)}(\sigma) : p_{1, \sigma}(\tilde{\bf t}) x_1 + g_{2, 4}\,p_{2, \sigma}(\tilde{\bf t}) x_2 \equiv 0 \bmod \ell.
\end{equation}
This lattice is not necessarily proper, but it satisfies the relation
\begin{equation}
\label{958}
\Lambda(\sigma) \subseteq \tilde{\Lambda}^{(\ell)}(\sigma)\ (\sigma \in \mathfrak S^\flat \text{ and } \ell \mid d_2D/g_{2, 4}).
\end{equation}
For convenience, we define 
\begin{equation}
\label{962}
\tilde{\Lambda}^{(\ell)}({\rm id} ) = \Lambda_{\ell}\ \text{ if }\ell \mid D,
\end{equation}
so that \eqref{958} is also satisfied. This inclusion will be frequently used in our proof, so we call $\tilde{\Lambda}^{(\ell)}(\sigma)$ the {\it $\ell$--enveloping lattice associated with $\sigma$}.

\subsection{\texorpdfstring{The determinant $D^2/\nu$ and its properties}{The determinant and its properties}}
By the definition \eqref{deffrakSbis} of $\mathfrak S$, the definition \eqref{defgamma} of $\gamma$, we have, for every $\sigma \in \mathfrak S$, the equality
\begin{equation}
\label{D2/nu=a/b}
\vert\, \det (\gamma^{-1} \sigma) \, \vert = \frac{\nu}{D^2} = \frac{b}{a},
\end{equation}
where $a$ and $b$ are integers satisfying the conditions
\begin{equation}
\label{condforaandbbis}
\gcd(a, b) =1, \quad 1\leq b \leq a.
\end{equation}
The last inequality is a consequence of \eqref{conditionsforDandnu}. 

\begin{remark}
\label{useful} 
As a consequence of Lemma \ref{multipleofdet-1}, the index of every component lattice $\Lambda(\sigma)$ of the main covering \eqref{maincovering} is an integer divisible by $a$.
\end{remark} 

Little by little, we will restrict the possible values for the integers $a$, $b$, $D$, $\nu$ and $d_2$. The {\it steps} of these successive restrictions are based on more and more intricate considerations on the index and on the equations of the components of the coverings \eqref{maincovering} and \eqref{dualcovering}. The first one is 

\begin{step}
\label{1<b<a<6} 
With the above notations and hypotheses, one has the inequalities
$$
1\leq b \leq a \leq 5.
$$
\end{step}

\begin{proof} 
The six lattices $\Lambda(\sigma)$ are a covering of $\Z^2$ by \eqref{maincovering}. By Remark \ref{useful}, for each $\sigma\in \mathfrak S$
the index $[\Z^2 : \Lambda(\sigma)]$ is an integer multiple of $a$, so its value is at least $ a $. Then Lemma~\ref{sumofdensities} leads to the inequality
$6/ a >1$, so $ a \leq 5.$
\end{proof}

We will now discuss the intimate properties of the coefficients defining the first equations of the lattices $\Lambda(\sigma)$ (see \eqref{generalnotation}). 

\subsection{\texorpdfstring{Congruence properties of the coefficients defining the lattices $\Lambda$}{Congruence properties of the coefficients defining the lattices Lambda}} 
We want to extract some structure in the equations defining the lattices $\Lambda(\sigma)$.

\begin{mydef}
\label{dZn} 
We adopt the notations from equations \eqref{generalnotation} and \eqref{generalnotation*}. For $\sigma \in \mathfrak S$, for $\mathbf{t} = (t_1, t_2, t_3, t_4)$ a $4$--tuple of coprime integers with $d_2 \neq 0$ and for $x$ an integer $\geq 1$, we denote by $Z_{\mathbf{t}, \textup{top}}(x)$ the number of $\sigma \in \mathfrak S$ for which the point
\[
(p_{1, \sigma}(\mathbf{t}), p_{2, \sigma}(\mathbf{t}))
\]
has order less than $x$ in the additive group $(\Z/x\Z)^2$. 

Now consider the set $S = S_{\mathbf t, \textup{top}}(x)$ of $\sigma \in \mathfrak S$ for which
\[
(p_{1, \sigma}(\mathbf{t}), p_{2, \sigma}(\mathbf{t}))
\]
has order $x$ in $(\Z/x\Z)^2$. We say that $\sigma \sim \sigma'$ if there exists an invertible element $ \alpha \in (\Z/x\Z)^{\ast }$ such that
\[
\alpha \cdot (p_{1, \sigma}(\mathbf{t}), p_{2, \sigma}(\mathbf{t})) = (\alpha p_{1, \sigma}(\mathbf{t}), \alpha p_{2, \sigma}(\mathbf{t})) = (p_{1, \sigma'}(\mathbf{t}), p_{2, \sigma'}(\mathbf{t})).
\]
We write $n_{\mathbf{t}, \textup{top}}(x)$ for the number of equivalence classes in $S$.
\end{mydef}

\begin{remark} 
\label{equalityoflattices}
Let $x \geq 1$ and let $\sigma$ and $\sigma'$ be two elements from $S_{\mathbf t, \textup{top}}(x)$ such that $\sigma \sim \sigma'$. Then, for every integer $g$ the two lattices respectively defined by the equations
$$ 
p_{1, \sigma}(\mathbf{t}) x_1+ g \cdot p_{2, \sigma}(\mathbf{t})x_2 \equiv 0 \bmod x
$$
and
$$
p_{1, \sigma'}(\mathbf{t}) x_1+ g \cdot p_{2, \sigma'}(\mathbf{t})x_2 \equiv 0 \bmod x
$$
are equal. We will use this remark several times below to delete some redundant lattices appearing in a covering of $\Z^2$.
\end{remark}


\begin{remark}
By the definition \eqref{generalnotation*}, we observe that $(1, 0) \in S_{\mathbf{t}, \textup{top}}(x)$. So $S_{\mathbf{t}, \textup{top}}(x)$ is never empty and we have $ n_{\mathbf t, \textup{top}}(x)\geq 1$ and $Z_{\mathbf{t}, \textup{top}}(x) \leq 5$.
\end{remark}

From the equality
$$
Z_{\mathbf{t}, \textup{top}}(x) + \vert S_{\mathbf t, \textup{top}}(x)\vert =\vert \mathfrak S \vert = 6,
$$
we deduce the inequality
$$
Z_{\mathbf{t}, \textup{top}}(x) + n_{\mathbf t, \textup{top}}(x) \leq 6.
$$
The following lemma improves on this inequality for small values of the integer $x$. We have

\begin{lemma}
\label{lZxnx}
Let $\mathbf{t} = (t_1, t_2, t_3, t_4) \in \Z^4$ with $\gcd(t_2, t_4) = 1$. 

\begin{itemize}
\item $(x = 5)$. We have the inequalities 
\begin{align}
\label{eZxnx45bis}
Z_{\mathbf{t}, \textup{top}}(5) + n_{\mathbf{t}, \textup{top}}(5) \leq 3, \quad Z_{\mathbf{t}, \textup{top}}(5) \leq 1.
\end{align}

\item $(x = 4)$. We have the inequalities
\begin{equation}
\label{2795}
Z_{\mathbf{t}, \textup{top}}(4) + n_{\mathbf{t}, \textup{top}}(4) \leq 4, \quad Z_{\mathbf{t}, \textup{top}}(4) \leq 1.
\end{equation}

\begin{itemize}
\item[(i)] The inequalities \eqref{2795} become both equalities only if there exists an element $\sigma \in \mathfrak{S}$ such that $(p_{1, \sigma}(\mathbf{t}), p_{2, \sigma}(\mathbf{t})) \equiv (2, 0) \bmod 4$.

\item[(ii)] Additionally suppose that $\gcd(t_1, t_3, 2) = 1$. Then 
$$
\{(0, 1), (0, 3), (2, 1), (2, 3)\} \not \subseteq \left\{ \left(\alpha \,p_{1, \sigma }(\mathbf t), \alpha\, p_{2, \sigma}(\mathbf t)\right) : 
\alpha \in (\Z /4\Z)^*, \sigma \in S_{\mathbf t, {\rm top} }(4) \right\}.
$$
\item[(iii)] If $\gcd(t_1, t_3, 2) = 1$ and furthermore $Z_{\mathbf{t}, \textup{top}}(4) = 1$, then
$$
\{(0, 1), (0, 3), (2, 1), (2, 3)\} \cap \left\{\left(\alpha p_{1, \sigma }(\mathbf t), \alpha p_{2, \sigma}(\mathbf t)\right) : 
\alpha \in (\Z/4\Z)^\ast, \sigma \in S_{\mathbf t, {\rm top}}(4)\right\} = \varnothing.
$$
\end{itemize}

\item $(x = 3)$. We have the inequalities
\begin{equation}
\label{3335}
Z_{\mathbf{t}, \textup{top}}(3) + n_{\mathbf{t}, \textup{top}}(3) \leq 3 \textup{ and } Z_{\mathbf{t}, \textup{top}}(3) \leq 1, \textup{ or } Z_{\mathbf{t}, \textup{top}}(3) = 5.
\end{equation}
-- We have $Z_{\mathbf{t}, \textup{top}}(3) = 5$ if and only if
\begin{multline}
\label{eBadt}
(t_1, t_2, t_3, t_4) \bmod 3 \in \\ 
\mathcal T_3 :=\left\{(0, 1, 0, 1), (0, 2, 0, 2), (1, 1, 1, 1), (2, 2, 2, 2), (1, 2, 1, 2), (2, 1, 2, 1)\right\}.
\end{multline}
In particular, we have the inequality $v_3(t_1t_4-t_2t_3)\geq 1$.
\end{itemize}
\end{lemma}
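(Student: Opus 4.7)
All three assertions depend only on $\mathbf t \bmod x$, and the hypothesis $\gcd(t_2, t_4) = 1$ reduces to $(t_2, t_4) \not\equiv (0, 0) \bmod p$ for every prime $p \mid x$. The plan is to reduce to a finite case analysis on the six pairs $(p_{1, \sigma}(\mathbf t), p_{2, \sigma}(\mathbf t))$ read off from Lemma \ref{lLatticeEquationsbis}, guided by some algebraic structure.

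The central remark is that $p_{2, R}(\mathbf t) = p_{2, R^2}(\mathbf t) = t_2^2 + t_2 t_4 + t_4^2$ is the norm form of $\Z[\omega]$, with $\omega$ a primitive cube root of unity. Modulo a prime $p \equiv 2 \bmod 3$ (so $p = 5$ in our range) this form is anisotropic, hence vanishes only at $(t_2, t_4) \equiv (0, 0) \bmod p$, a case excluded by coprimality. Modulo $3$ it degenerates: $t_2^2 + t_2 t_4 + t_4^2 \equiv (t_2 - t_4)^2 \bmod 3$. The remaining second coordinates factor as $p_{2, S} = (t_4 - t_2)(t_4 + t_2)$, $p_{2, RS} = t_2(t_2 + 2 t_4)$, and $p_{2, R^2 S} = t_4(t_4 + 2 t_2)$.

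For $x = 5$, the identity already contributes $(1, 0) \in S_{\mathbf t, \textup{top}}(5)$, and the anisotropy remark puts $R$ and $R^2$ there as well. So $Z$ can only come from the three reflective elements; checking the factorizations above with $\gcd(t_2, t_4, 5) = 1$ shows that at most one of them vanishes mod $5$, which gives $Z \leq 1$. Combined with explicit linear relations among the $p_{1, \sigma}$ forcing collisions in $\mathbb{P}^1(\FF_5)$, this yields $Z + n \leq 3$. For $x = 3$, imposing simultaneous vanishing of every $(p_{1, \sigma}, p_{2, \sigma})$ modulo $3$ forces $t_2 \equiv t_4 \bmod 3$ (from $p_{2, R}$); substituting $t_3 \equiv t_1$, $t_4 \equiv t_2 \bmod 3$ collapses the other systems as well, and a direct enumeration of admissible $(t_1, t_2) \bmod 3$ with $t_2 \not\equiv 0$ produces exactly the six residues of $\mathcal T_3$. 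On $\mathcal T_3$, $d_2 = t_1 t_4 - t_2 t_3 \equiv t_1 t_2 - t_2 t_1 = 0 \bmod 3$ is immediate. For $\mathbf t \bmod 3 \notin \mathcal T_3$, the remaining inequalities follow from a short residue enumeration, using the $\Theta$-symmetry from Remark \ref{930} to halve the workload.

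The main obstacle is the case $x = 4$. Since $4$ is composite, a pair has order less than $4$ iff both coordinates are even, and $(\Z/4\Z)^\ast = \{\pm 1\}$ acts simply on pairs of order exactly $4$. The plan is to enumerate residues $\mathbf t \bmod 4$ with $(t_2, t_4) \not\equiv (0, 0) \bmod 2$, tabulate the six pairs, and record $Z$ and $n$ in each class. The inequalities of \eqref{2795} and claim (i), that joint equality forces $(p_{1, \sigma}, p_{2, \sigma}) \equiv (2, 0) \bmod 4$ for some $\sigma$, emerge from this tabulation. The extra parity hypothesis $\gcd(t_1, t_3, 2) = 1$ in (ii) and (iii) rules out a quarter of the $(t_1, t_3)$ residues, after which the forbidden classes $\{(0, 1), (0, 3), (2, 1), (2, 3)\}$ are excluded case by case. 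The sheer bookkeeping volume here is the true difficulty, and computer assistance is likely indispensable, echoing the computer-assisted enumerations elsewhere in the paper.
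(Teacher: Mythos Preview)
Your approach is essentially the same as the paper's: both reduce everything to a finite enumeration of residue classes $\mathbf t \bmod x$, and the paper's proof is in fact nothing more than a pointer to the brute-force SageMath scripts in Subsection~\ref{aBruteForce}. Your structural observations---the anisotropy of $t_2^2+t_2t_4+t_4^2$ modulo $5$ to get $Z_{\mathbf t,\textup{top}}(5)\le 1$, and the derivation of $\mathcal T_3$ from $p_{2,R}\equiv(t_2-t_4)^2\bmod 3$ together with $p_{1,R}\equiv t_2(t_1-t_3)\bmod 3$---are correct and explain conceptually why the inequalities hold, but the paper dispenses with all such reasoning in favor of raw computation, and your own sketch ultimately falls back on the same enumeration for the $Z+n$ bounds and for $x=4$.
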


\begin{proof}
This follows from a direct computation, see Subsection \ref{aBruteForce}.
\end{proof} 

The case $x = 9$ requires some variation that we describe now. Let $\mathcal T_3$ be the set of $4$--tuples $\mathbf t$ appearing on the right--hand side of \eqref{eBadt}. So we have 
\[
p_{1, \sigma}({\bf t}) \equiv p_{2, \sigma}({\bf t}) \equiv 0 \bmod 3
\]
for all ${\bf t} \in \mathcal T_3$ and for all $\sigma\in \mathfrak S^\flat$. We are now searching for information about these coefficients modulo $9$. This is the purpose of Lemma \ref{messfor9} below. For $\mathbf t \in \mathcal T_3$, for $\sigma \in \mathfrak S^\flat$ and for $i \in \{1, 2\}$ we consider the integers $\widetilde{p_{i, \sigma}}({\mathbf t}) := p_{i, \sigma}(\mathbf t)/3$. By convention, we define $(\widetilde{p_{1, {\rm id}}}(\mathbf t), \widetilde{p_{2, {\rm id}}}(\mathbf t)) = (1, 0)$. Let $\widetilde{Z_{{\mathbf t}, {\rm top}}}(9)$ be the number of $\sigma \in {\mathfrak S}$ such that
$$
(\widetilde{p_{1, \sigma}}(\mathbf t), \widetilde{p_{2, \sigma}}(\mathbf t)) \equiv (0, 0) \bmod 3. 
$$
On the set 
$$
\mathcal E_9(\mathbf t) := \{\sigma \in \mathfrak S : ( \widetilde{p_{1, \sigma} }(\mathbf t), \widetilde{p_{2, \sigma} }(\mathbf t)) \not \equiv (0, 0) \bmod 3\}, 
$$
we define the following equivalence relation: we set $\sigma \sim \sigma'$ if and only if there exists $\alpha \not \equiv 0 \bmod 3$ such that 
$$
(\widetilde{p_{1, \sigma} }(\mathbf t), \widetilde{p_{2, \sigma} }(\mathbf t)) = (\alpha\, \widetilde{p_{1, \sigma'}}(\mathbf t), \alpha \, \widetilde{p_{2, \sigma'}}(\mathbf t)).
$$
Then we denote by $\widetilde{n_{{\bf t}, {\rm top}}}(9)$ the number of equivalence classes in $\mathcal E_9(\mathbf t)$ under the equivalence relation $\sim$. We now state
 
\begin{lemma}
\label{messfor9} 
We adopt the notations just above. Let $\mathbf t = (t_1, t_2, t_3, t_4) \in \Z^4$ such that $\gcd(t_2, t_4) =1$ and such that $\boldsymbol t \in \mathcal T_3$ . Then we have the inequalities 
\begin{equation}
\label{1914}
\widetilde{Z_{\mathbf{t}, \textup{top}}}(9) + \widetilde{ n_{\mathbf{t}, \textup{top}}}(9) \leq 3 \textup{ and } \widetilde{Z_{\mathbf{t}, \textup{top}}}(9) \leq 1 .
\end{equation}
\end{lemma}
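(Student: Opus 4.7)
The plan is to proceed by direct case analysis, paralleling the approach of Lemma \ref{lZxnx}. Since the polynomials $p_{i,\sigma}(\mathbf{t})$ are quadratic in $\mathbf{t}$, the residues $\widetilde{p_{i,\sigma}}(\mathbf{t}) \bmod 3$ depend only on $\mathbf{t} \bmod 9$. For $\boldsymbol{\tau} \in \mathcal{T}_3$ one always has $\tau_1 \equiv \tau_3$ and $\tau_2 \equiv \tau_4 \pmod 3$, and the coprimality hypothesis $\gcd(t_2, t_4) = 1$ forces $(\tau_2, \tau_4) \not\equiv (0, 0) \pmod 3$. So the task reduces to enumerating, for each of the six residues $\boldsymbol{\tau} \in \mathcal{T}_3$, the $3^4 = 81$ lifts $\mathbf{t} \bmod 9$ and tabulating $(\widetilde{p_{1,\sigma}}(\mathbf{t}), \widetilde{p_{2,\sigma}}(\mathbf{t})) \bmod 3$ for each $\sigma \in \mathfrak{S}^\flat$.

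To make the tabulation more structured, I would write $t_i = \tau_i + 3 s_i$ with $s_i \in \{0, 1, 2\}$ and use the Taylor-style expansion
$$
\widetilde{p_{i,\sigma}}(\mathbf{t}) \equiv \frac{p_{i,\sigma}(\boldsymbol{\tau})}{3} + L_{i,\sigma}(\boldsymbol{s}) \pmod 3,
$$
where $L_{i,\sigma}(\boldsymbol{s})$ is the linear form in $\boldsymbol{s}$ obtained from the gradient of $p_{i,\sigma}$ at $\boldsymbol{\tau}$, read from Lemma \ref{lLatticeEquationsbis}. This recasts each residue class $\boldsymbol{\tau} \in \mathcal{T}_3$ as an affine problem over $\mathbb{F}_3$ in the four variables $\boldsymbol{s}$, which can then be enumerated directly.

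For the bound $\widetilde{Z_{\mathbf{t}, \textup{top}}}(9) \leq 1$, I first note that $\mathrm{id}$ always lies in $\mathcal{E}_9(\mathbf{t})$, since by convention $(\widetilde{p_{1,{\rm id}}}, \widetilde{p_{2,{\rm id}}}) = (1, 0) \not\equiv (0, 0) \pmod 3$. Hence only the five elements of $\mathfrak{S}^\flat$ can contribute to $\widetilde{Z}(9)$, and one must check that no two of them simultaneously satisfy $\widetilde{p_{i,\sigma}}(\mathbf{t}) \equiv 0 \pmod 3$ for both $i$. For the bound $\widetilde{Z}(9) + \widetilde{n}(9) \leq 3$, after accounting for the $\mathrm{id}$ contribution in the projective class $[1 : 0] \in \mathbb{P}^1(\mathbb{F}_3)$, one verifies that the remaining five vectors cannot fill more than two additional outcomes, where an outcome is either a distinct non-zero projective class or a zero.

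The main obstacle I foresee is not structural but combinatorial: there are four non-zero projective classes in $\mathbb{P}^1(\mathbb{F}_3)$, and the bound $\widetilde{Z}(9) + \widetilde{n}(9) \leq 3$ is sharp, so one cannot hope to eliminate the case analysis purely by a clever symmetry argument. I therefore expect this last step to be carried out by a brute-force enumeration analogous to Subsection \ref{aBruteForce}, iterating over the six residues $\boldsymbol{\tau} \in \mathcal{T}_3$ and the $3^4$ lifts $\boldsymbol{s}$ in each.
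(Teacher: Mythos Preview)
Your proposal is correct and matches the paper's approach: the paper's proof is precisely a brute-force enumeration over the lifts $\mathbf{t} \bmod 9$ of each $\boldsymbol{\tau} \in \mathcal{T}_3$, implemented as a SageMath script in Subsection~\ref{aBruteForce}. The only minor difference is that the paper exploits the symmetry $p_{i,\sigma}(-\mathbf{t}) = p_{i,\sigma}(\mathbf{t})$ (the polynomials are homogeneous of degree~$2$) to reduce from six residue classes in $\mathcal{T}_3$ to the three representatives $(0,1,0,1)$, $(1,1,1,1)$, $(1,2,1,2)$, whereas you propose to enumerate all six; your Taylor-expansion framing is a clean way to organise the same computation but does not change the underlying method.
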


\begin{proof} 
This also follows from a direct computation, see Subsection \ref{aBruteForce}.
\end{proof}

It is perhaps worthwhile to point out the analogy between the inequality \eqref{1914} and the first part of the inequality \eqref{3335}. We announce one more property of $p_{1, \sigma}$ before proceeding.

\begin{lemma}
\label{lTripleVanishing}
Let $p \neq 3$ be a prime number or let $p = 9$. Let $(t_1, t_2, t_3, t_4) \in \Z^4$ be such that $\gcd(t_2, t_4) = \gcd(t_1, t_3) = 1$. Then we have
\[
|\{\sigma \in \mathfrak{S}^\flat : p_{1, \sigma}({\bf t}) \equiv 0 \bmod p\}| \leq 3
\]
with equality only possible for the set $\{T_2^{-1} S T_2, T_2^{-1} RS T_2, T_2^{-1} R^2S T_2\}$ and $p \equiv 1 \bmod 3$.

Furthermore, we have
\[
|\{\sigma \in \mathfrak{S}^\flat : p_{1, \sigma}({\bf t}) \equiv 0 \bmod 3\}| \in \{0, 1, 2, 5\}.
\]
If the above cardinality equals $5$, then $\mathbf{t}$ lies in the set 
$$
\mathcal{T}_3^\ast := \{(1, 1, 1, 1), (2, 2, 2, 2), (1, 2, 1, 2), (2, 1, 2, 1)\}.
$$
\end{lemma}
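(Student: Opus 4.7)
Set $\mathbf{u} = (t_1, t_3)^{\top}$ and $\mathbf{v} = (t_4, -t_2)^{\top}$; by the hypotheses both vectors are primitive. For $M \in \mathbf{D}_3$ and $\sigma = T_2^{-1} M T_2$, a direct calculation compatible with Remark \ref{representativematrix} yields $p_{1,\sigma}(\mathbf{t}) = \pm q_M(\mathbf{t})$, where I set $q_M(\mathbf{t}) := \mathbf{v}^{\top} M \mathbf{u}$. Hence $p_{1,\sigma} \equiv 0 \bmod p$ is equivalent to $M \mathbf{u} \in L := \{\mathbf{w} : \mathbf{v}^{\top} \mathbf{w} \equiv 0 \bmod p\}$. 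The matrix identities $I + R + R^2 = 0$ and $S + RS + R^2 S = 0$ in $M_2(\Z)$ give
\[
q_I + q_R + q_{R^2} \equiv 0, \qquad q_S + q_{RS} + q_{R^2 S} \equiv 0 \pmod{p}.
\]
Thus the number of vanishing $q_M$ among the reflections $\{S, RS, R^2 S\}$ lies in $\{0, 1, 3\}$, and among the rotations $\{R, R^2\}$ it is at most $1$ if $q_I \not\equiv 0$, and either $0$ or $2$ if $q_I \equiv 0$.

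I would prove the ``furthermore'' claim first, since the $p = 9$ case relies on it. Over $\mathbb{F}_3$, $x^2 + x + 1 \equiv (x-1)^2$, so $R$ has a unique eigenline $[1:1]$, which is also fixed by $S$. If two rotations vanish mod $3$, then $\mathbf{u}, R\mathbf{u} \in L$ forces $\mathbf{u}$ to be an $R$-eigenvector and hence $[\mathbf{u}] = [L] = [1:1]$; then $S\mathbf{u} = \mathbf{u}$ and $RS\mathbf{u} = R\mathbf{u}$ lie in $L$ as well, so all three reflections vanish too and the count is $5$. Symmetrically, if three reflections vanish the same argument applied to $S\mathbf{u}$ yields $[\mathbf{u}] = [1:1]$ and both rotations also vanish. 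Otherwise the count lies in $\{0, 1, 2\}$. The condition $[\mathbf{u}] = [L] = [1:1] \bmod 3$ combined with $\gcd(t_1, t_3) = \gcd(t_2, t_4) = 1$ is exactly $\mathbf{t} \bmod 3 \in \mathcal{T}_3^{\ast}$.

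For prime $p \neq 2, 3$, interpret $[\mathbf{u}], [L] \in \mathbb{P}^1(\mathbb{F}_p)$; then $q_M \equiv 0$ iff $M[\mathbf{u}] = [L]$. A count $\geq 3$ falls into one of two cases. In the first, all three reflections vanish, so $S\mathbf{u}, RS\mathbf{u}, R^2 S\mathbf{u}$ lie in the $1$-dimensional $L$, making $S\mathbf{u}$ an $R$-eigenvector; this forces $p \equiv 1 \bmod 3$, and then $[\mathbf{u}]$ is the other $R$-fixed point (since $S$ swaps the two $R$-fixed points), so $\mathbf{u}, R\mathbf{u}, R^2 \mathbf{u}$ all lie outside $L$ and the count is exactly $3$ with vanishing set $\{T_2^{-1} S T_2, T_2^{-1} RS T_2, T_2^{-1} R^2 S T_2\}$. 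In the second case, two rotations and at least one reflection vanish; then $[\mathbf{u}] = [L]$ must be an $R$-fixed point, but $[S\mathbf{u}]$ is the other fixed point and no reflection can send $\mathbf{u}$ into $L$, a contradiction. For $p = 2$, $x^2 + x + 1$ is irreducible over $\mathbb{F}_2$, so $R$ has no eigenvectors, both cases are empty, and count $\leq 2$.

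For $p = 9$ I descend to mod $3$: a mod-$9$ count of $\geq 3$ forces a mod-$3$ count of $\geq 3$, hence equal to $5$ by the ``furthermore'' statement, so $\mathbf{t} \bmod 3 \in \mathcal{T}_3^{\ast}$. Writing $t_i = c_i + 3 u_i$ with $(c_1, c_2, c_3, c_4) = (c, c', c, c')$ and $c, c' \in \{1, 2\}$, each $q_M$ is divisible by $3$ and the quotients $\bar q_M := q_M/3 \bmod 3$ are explicit linear forms in $u_1, u_2, u_3, u_4 \in \mathbb{F}_3$, satisfying the induced relations $\bar q_I + \bar q_R + \bar q_{R^2} \equiv 0$ and $\bar q_S + \bar q_{RS} + \bar q_{R^2 S} \equiv 0 \bmod 3$. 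A short linear-algebra check using $cc' \not\equiv 0 \bmod 3$ rules out both ``three reflections vanish mod $9$'' and ``two rotations vanish mod $9$''; hence the mod-$9$ count is at most $2$, consistent with the stated bound (since $9 \not\equiv 1 \bmod 3$ makes the equality case vacuous). The main obstacle is precisely the $p = 9$ case: as $\Z/9$ is not a field, the clean projective argument available for prime $p$ must be replaced by the explicit coordinate computation with $\bar q_M$.
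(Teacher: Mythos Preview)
Your proof is correct and takes a genuinely different route from the paper's. The paper establishes the first part by running, for each of the ten $3$-element subsets of $\mathfrak{S}^\flat$, a Gr\"obner basis computation in SageMath on the ideal generated by the corresponding $p_{1,\sigma}$ together with the coprimality constraints, checking that the integer $3$ (or, in the exceptional reflection case, the polynomial $t_1^2+t_1t_3+t_3^2$) appears in the basis. The second part is done by brute force over all $81$ residue classes modulo $3$. Your argument replaces both computations with a conceptual description: writing $p_{1,\sigma}=\pm\mathbf{v}^\top M\mathbf{u}$ identifies the vanishing condition with $M[\mathbf{u}]=[L]$ in $\mathbb{P}^1(\mathbb{F}_p)$, and then the linear relations $I+R+R^2=0$ and $S+RS+R^2S=0$ together with the dichotomy on whether $x^2+x+1$ splits modulo~$p$ do all the work. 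This yields the same conclusions without machine assistance and explains structurally why the exceptional triple is precisely the set of reflections and why $p\equiv 1\bmod 3$ enters. For $p=9$, your reduction to the ``furthermore'' case followed by the observation that $\bar q_R-\bar q_{R^2}$ and $\bar q_{RS}-\bar q_{R^2S}$ are both $\equiv 2cc'\not\equiv 0\bmod 3$ is a clean replacement for the paper's separate Gr\"obner computation. The paper's approach has the advantage of being fully mechanical and easily extensible; yours gives a human-readable proof and makes the role of the $\mathbf{D}_3$-action transparent.
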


\begin{proof}
It is possible to prove this without using the aid of a computer algebra package. However, to save some effort, we have decided to use SageMath. Let us suppose that
\begin{equation*}
p_{1, T_2^{-1}RT_2}({\bf t}) \equiv p_{1, T_2^{-1}R^2T_2}({\bf t}) \equiv p_{1, T_2^{-1}ST_2}({\bf t}) \equiv 0 \bmod p.
\end{equation*}
For the other combinations of polynomials from Lemma \ref{lLatticeEquationsbis}, we refer the reader to Subsection \ref{aTripleVanishing}. Then we run the following script.

\begin{verbatim}
R.<t1, t2, t3, t4, u, v, w, x> = ZZ['t1, t2, t3, t4, u, v, w, x']
I = ideal(t1 * t2 + t2 * t3 + t3 * t4, t1 * t2 + t1 * t4 + t3 * t4, 
t3 * t4 - t1 * t2, t2 * u + t4 * v - 1, t1 * w + t3 * x - 1)
B = I.groebner_basis()
\end{verbatim}

The first three elements in the ideal are the polynomials from Lemma \ref{lLatticeEquationsbis}, specifically from the top of equations (\ref{Rbis}), (\ref{R2bis}), (\ref{Sbis}), while the last equation encodes the coprimality conditions $\gcd(t_2, t_4) = \gcd(t_1, t_3) = 1$. SageMath shows that $3$ is always contained in the Gr\"obner basis. This implies the first part of the lemma.

The second part of the lemma can be checked directly by hand or by the brute force computations in Subsection \ref{aBruteForce}, specifically \ref{a414}.
\end{proof}

Since the conclusion of Lemma \ref{lTripleVanishing} depends only on $t_i$ modulo $p$, it is straightforward to slightly weaken the hypotheses of this lemma to $\gcd(t_1, t_3, p) = 1$. We shall often use the lemma this way without further comment.

Concerning the polynomial $P_{1, \sigma}$ we will use the following

\begin{lemma}
\label{3467*}
Consider the set $\mathcal S := \{A, B, C, D\}$ of four polynomials in two variables $U$ and $V$ given by $A(U, V) = U^2+UV+V^2$, $B(U, V) = V^2 - U^2$, $C(U, V) = U^2 + 2UV$, $D(U, V) = V^2 + 2UV$. Let $(u, v) \in \Z^2$ be such that $\gcd(u, v, 3) = 1$. We then have
$$
A(u, v) \not\equiv 0 \bmod 9
$$
and 
\begin{equation}
\label{3469} 
\left\vert \{P\in \mathcal S : P(u, v)\equiv 0 \bmod 9\}\right\vert \leq 1.
\end{equation}
\end{lemma}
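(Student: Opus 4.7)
The plan is to handle the two claims of the lemma separately. For the assertion $A(u,v) \not\equiv 0 \bmod 9$, the key observation is the identity
\[
4A(u,v) = (2u+v)^2 + 3v^2,
\]
combined with a split on whether $3 \mid v$. If $3 \nmid v$, then $v^2 \in \{1, 4, 7\} \bmod 9$, so $3v^2 \equiv 3 \bmod 9$; thus $A \equiv 0 \bmod 9$ would force $(2u+v)^2 \equiv 6 \bmod 9$, which is impossible since the squares modulo $9$ lie in $\{0, 1, 4, 7\}$. If instead $3 \mid v$, then the coprimality hypothesis forces $3 \nmid u$ and one has $A(u,v) \equiv u^2 \not\equiv 0 \bmod 3$. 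This settles the first claim.

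Since $A$ never vanishes modulo $9$, the inequality \eqref{3469} reduces to showing that at most one of $B, C, D$ vanishes modulo $9$. I would proceed by case analysis on the residues of $u, v$ mod $3$. The asymmetric cases are immediate from the factorizations $B = (v-u)(v+u)$, $C = u(u+2v)$, $D = v(v+2u)$: when $3 \mid u$ and $3 \nmid v$, both $B \equiv v^2$ and $D \equiv v^2 \bmod 3$ are units, so only $C$ can vanish mod $9$; the case $3 \nmid u, 3 \mid v$ is symmetric, leaving only $D$ as a candidate.

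The main case is $3 \nmid u$ and $3 \nmid v$, and this is where the argument has any content. Here I would compute the vanishing conditions explicitly: $C \equiv 0 \bmod 9$ iff $u \equiv -2v \bmod 9$, $D \equiv 0 \bmod 9$ iff $v \equiv -2u \bmod 9$, and $B \equiv 0 \bmod 9$ iff $u \equiv \pm v \bmod 9$. The $B$ case uses the small observation that $(u-v)(u+v) \equiv 0 \bmod 9$ forces $9 \mid u-v$ or $9 \mid u+v$, since $u - v$ and $u + v$ cannot both be divisible by $3$ (their sum $2u$ is a unit mod $3$). Pairwise incompatibility then follows mechanically: each simultaneous vanishing of two of $\{B, C, D\}$ produces a congruence of the form $3u \equiv 0$ or $3v \equiv 0 \bmod 9$, contradicting $\gcd(u, v, 3) = 1$. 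The only step requiring real care is this lift from $\bmod 3$ to $\bmod 9$ in the $B$ equation; everything else is routine, and as a fallback a brute-force check over the $36$ relevant residue pairs in $(\Z/9\Z)^\ast \times (\Z/9\Z)^\ast$ would settle the matter immediately.
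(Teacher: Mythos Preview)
Your argument is correct. Both claims are established cleanly, and the case analysis in the main case $3\nmid u$, $3\nmid v$ is sound; in particular your observation that $u-v$ and $u+v$ cannot both be divisible by $3$ (since their sum $2u$ is a unit) is exactly what is needed to lift the factorization of $B$ to modulus $9$.

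The paper takes a slightly different and more economical route. For $A$, instead of the identity $4A=(2u+v)^2+3v^2$, it simply notes that $A(u,v)\equiv 0\bmod 3$ forces $u\equiv v\bmod 3$, and then $A(u,v)\equiv 3u^2\bmod 9$, which is nonzero under the coprimality hypothesis. For the inequality~\eqref{3469}, the paper exploits the linear relation $B=D-C$: once $A$ is excluded, it suffices to show that $C$ and $D$ cannot vanish simultaneously modulo $9$, which collapses your three pairwise checks to a single one. The difference $D-C=(v-u)(v+u)$ then leads to the same three-branch case split you carry out for $B$. So your approach trades one structural observation ($B=D-C$) for two extra but entirely mechanical pairwise incompatibility checks; both reach the conclusion with comparable effort.
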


\begin{proof} 
We make a succession of simple observations:
\begin{itemize}
\item For any integers $(u, v)$, we have $A(u, v)\equiv 0 \bmod 3$ if and only if $u \equiv v \bmod 3$. If $u \equiv v \bmod 3$, then $A(u, v) \equiv 3u^2 \bmod 9$. We deduce that $\gcd(u, v, 3) = 1$ implies $A(u, v) \not\equiv 0 \bmod 9$.
\item Since $B = D - C$, in order to prove the inequality \eqref{3469}, it is sufficient to show that the polynomials $C$ and $D$ do not simultaneously vanish modulo $9$, on some $(u, v) \in \Z^2$ with $\gcd(u, v, 3) = 1$.
\item Let $(u, v)$ be such that $C(u, v) \equiv D(u, v) \equiv 0 \bmod 9$. By taking the difference, we obtain that $(u - v)(u + v)\equiv 0 \bmod 9$. We analyze this congruence in three subcases
\begin{itemize}
\item if $u \equiv v \bmod 9$, then $C(u, v) \equiv 3 u^2 \bmod 9$, and the condition $C(u, v) \equiv 0 \bmod 9$ implies $u \equiv 0 \bmod 3$, so $v \equiv 0 \bmod 3$,
\item if $u \equiv -v \bmod 9$, then $C(u, v) \equiv -u^2 \bmod 9$ and we obtain once again $u \equiv v \equiv 0 \bmod 3$,
\item if $u \equiv v \bmod 3$ and $u \equiv -v \bmod 3$, then $u \equiv v \equiv 0 \bmod 3$.
\end{itemize}
\end{itemize}
This completes the proof of the lemma.
\end{proof}

Applying this lemma yields the inequality
\begin{align}
\label{e3Pbound*}
|\{\sigma \in \mathfrak{S}^\flat : P_{1, \sigma}({\bf t}) \equiv 0 \bmod 9\}| \leq 1.
\end{align}

\subsection{\texorpdfstring{The indicator $g_2$}{The indicator g2}}
\label{indicator} 
We want to measure the discrepancy between two natural ways of expressing the matrix representing $\sigma$. Given $\sigma \in \mathfrak{S}$, we define $d_2(\sigma) \in \Z_{>0}$ by writing
\begin{equation}
\label{2895}
\sigma = \frac{1}{d_2(\sigma)}
\begin{pmatrix}
a & b \\
c & d
\end{pmatrix}
\end{equation}
in its minimal form, which is possible thanks to Lemma \ref{Standard} and the equality $\det \sigma = \pm 1$. In particular, we have $\gcd(a, b, c, d) = 1$. Furthermore, by \eqref{deffrakSbis} and by the definition of $T_2$ (see \eqref{defT2bis}) we can also write
$$
\sigma = \frac{1}{d_2} \begin{pmatrix} a' & b' \\ c' & d' \end{pmatrix}
$$
with integers $a'$, $b'$, $c'$ and $d'$ and $d_2$ defined in \eqref{defd2}. Comparing this expression with \eqref{2895}, we deduce that $d_2(\sigma) \mid d_2$, which allows us to introduce the integer $g_2(\sigma) := d_2/d_2(\sigma)$. By definition, we have
\begin{equation}
\label{g2=gcd}
g_2(\sigma) = \gcd(a', b', c', d').
\end{equation}

\begin{lemma}
\label{lSX3.2}
Let $\sigma$ and $\sigma' $ be two distinct elements in $\mathfrak{S}$ and suppose that $\sigma, \sigma'$ do not both have order $3$. Then we have
\[
\gcd(g_2(\sigma), g_2(\sigma')) \in \{1, 3\}.
\]
\end{lemma}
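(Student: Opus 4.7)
Let $p$ be a prime dividing $g := \gcd(g_2(\sigma), g_2(\sigma'))$. The plan is to show first that $p = 3$, and then that $p^2 \nmid g$. By Remark~\ref{representativematrix}, $p \mid g_2(\rho)$ is equivalent to $p$ dividing each of $p_{1,\rho}({\bf t}), p_{2,\rho}({\bf t}), P_{1,\rho}({\bf t}), P_{2,\rho}({\bf t})$, and the determinant relation $p_{1,\rho} P_{2,\rho} - p_{2,\rho} P_{1,\rho} = \pm d_2^2$ coming from $\det \rho = \pm 1$ then forces $p \mid d_2$.

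Next I would introduce the set $\mathcal{G}_p := \{\rho \in \mathfrak{S}: p \mid g_2(\rho)\}$, which contains $\sigma$, $\sigma'$ and, via $g_2(\mathrm{id}) = d_2$, also $\mathrm{id}$. Using the decomposition $d_2 \rho = \pm \mathrm{adj}(T_2) \tau T_2$ together with $T_2 \cdot \mathrm{adj}(T_2) = \pm d_2 \cdot \mathrm{id}$, one obtains the identity $(d_2 \rho)(d_2 \rho') = \pm d_2 \cdot d_2 (\rho \rho')$. Combined with the obvious $g_2(\rho^{-1}) = g_2(\rho)$, this shows that $\mathcal{G}_p$ is a subgroup of $\mathfrak{S} \simeq {\bf D}_3$ when $v_p(d_2) = 1$; the case $v_p(d_2) \geq 2$ is handled by bootstrapping the same identity on powers of elements.

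By the classification of subgroups of ${\bf D}_3$, combined with the polynomial identities $P_{1, R} = P_{1, R^2}$, $p_{2, R} = p_{2, R^2}$ from Lemma~\ref{lLatticeEquationsbis} showing that $r \in \mathcal{G}_p$ if and only if $r^2 \in \mathcal{G}_p$, the set $\mathcal{G}_p$ is either trivial, an order-$2$ subgroup $\{\mathrm{id}, s_i\}$ for a reflection, the order-$3$ cyclic subgroup $\{\mathrm{id}, r, r^2\}$, or the whole $\mathfrak{S}$. For $p \neq 3$ with $p \equiv 2 \bmod 3$, the norm form $P_{1,R}({\bf t}) = t_1^2 + t_1 t_3 + t_3^2$ of $\mathbb{Z}[\omega]$ is anisotropic, so $r \in \mathcal{G}_p$ would force $p \mid g_{1,3}$ and (from $p \mid p_{2,R}$) also $p \mid g_{2,4}$, contradicting $\gcd(g_{1,3}, g_{2,4}) = 1$; this leaves only $\mathcal{G}_p = \{\mathrm{id}, s_i\}$ to analyse. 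For $p \equiv 1 \bmod 3$, both possibilities $\mathcal{G}_p \supseteq \{\mathrm{id}, r, r^2\}$ and $\mathcal{G}_p = \{\mathrm{id}, s_i\}$ remain, and must be excluded separately.

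The main obstacle is excluding these residual cases for $p \geq 5$: the algebraic data alone only pin down $\mathbf{t}$-congruences (for instance $t_3 \equiv \pm t_1$ and $t_4 \equiv \pm t_2 \bmod p$ in the reflection case) without contradicting coprimality. Here the plan is to invoke the main covering~\eqref{maincovering}: $p \mid g_2(\rho)$ for the non-identity elements of $\mathcal{G}_p$ forces the $\ell$-enveloping lattices from~\eqref{954} to degenerate, constraining the indices of the six lattices $\Lambda(\sigma)$, all integer multiples of $a$ by Remark~\ref{useful}, to a pattern incompatible with the catalog of minimal coverings of length $\leq 6$ (Lemmas~\ref{length5} and~\ref{length6}) and the bound $a \leq 5$ of Step~\ref{1<b<a<6}. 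The second claim $9 \nmid g$ follows in parallel: the order-$3$ case is ruled out since $9 \mid P_{1,R}$ would force $3 \mid g_{1,3}$ and $3 \mid g_{2,4}$, contradicting $\gcd(g_{1,3}, g_{2,4}) = 1$; the reflection case is closed using Lemma~\ref{messfor9} in place of the triple-vanishing bound of Lemma~\ref{lTripleVanishing}.
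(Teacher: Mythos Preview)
Your structural approach via $\mathcal{G}_p = \{\rho \in \mathfrak{S} : p \mid g_2(\rho)\}$ has two genuine gaps. First, the closure of $\mathcal{G}_p$ under products is only justified when $v_p(d_2) = 1$: from $p^2$ dividing each entry of $(d_2\rho)(d_2\rho') = d_2 \cdot (d_2\,\rho\rho')$ one loses $v_p(d_2)$ powers of $p$ upon dividing by $d_2$, and your ``bootstrapping on powers'' is not an argument. Second, and more seriously, your ``residual case'' $\mathcal{G}_p = \{\mathrm{id}, s_i\}$ is a mirage. The lemma is proved and used in the paper only for $\sigma, \sigma' \in \mathfrak{S}^\flat$ (the code in \S\ref{aSX3.2} checks exactly the ten unordered pairs of non-identity elements); under that reading, $\sigma \neq \sigma'$ already forces $|\mathcal{G}_p| \geq 3$, so a two-element reflection subgroup never arises and no covering argument is needed. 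If instead you allow $\sigma = \mathrm{id}$, the assertion is in fact \emph{false}: for $T_2 = \left(\begin{smallmatrix} 1 & 0 \\ 1 & 5 \end{smallmatrix}\right)$ one has $d_2 = 5$, the matrix $T_2^{-1} S T_2 = \left(\begin{smallmatrix} 1 & 5 \\ 0 & -1 \end{smallmatrix}\right)$ is integral, and hence $\gcd(g_2(\mathrm{id}), g_2(T_2^{-1}ST_2)) = 5$. Either way, your appeal to the main covering is both unnecessary and, as written, only a vague plan rather than a proof; once the reflection case is recognised as vacuous, the only genuine case $\mathcal{G}_p = \mathfrak{S}$ can be closed purely algebraically (e.g.\ the three conditions $p\mid P_{1,\sigma}$ for the reflections force $p=3$ or $p\mid g_{1,3}$, and symmetrically $p\mid g_{2,4}$).

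For comparison, the paper bypasses all structural considerations: for each of the nine admissible pairs $\{\sigma,\sigma'\}\subset\mathfrak{S}^\flat$ it verifies in SageMath that $3$ lies in the Gr\"obner basis of the ideal generated by the eight entries of $d_2\sigma$ and $d_2\sigma'$ together with the coprimality relation $u_1t_1 + u_2t_2 + u_3t_3 + u_4t_4 - 1$, which immediately gives $\gcd(g_2(\sigma),g_2(\sigma'))\mid 3$.
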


\begin{proof}
We will only consider the case $\sigma = T_2^{-1}R^2T_2$ and $\sigma' = T_2^{-1}ST_2$. For the other cases, we refer the reader to Subsection \ref{aSX3.2}. In this case, we run the following script.

\begin{verbatim}
R.<t1, t2, t3, t4, u1, u2, u3, u4> = ZZ['t1, t2, t3, t4, u1, u2, u3, u4']
I = ideal(t1 * t2 + t1 * t4 + t3 * t4, t2 * t2 + t2 * t4 + t4 * t4, 
t1 * t1 + t1 * t3 + t3 * t3, t1 * t2 + t2 * t3 + t3 * t4, t3 * t4 - t1 * t2, 
t4 * t4 - t2 * t2, t1 * t1 - t3 * t3, t1 * t2 - t3 * t4, 
u1 * t1 + u2 * t2 + u3 * t3 + u4 * t4 - 1)
B = I.groebner_basis()
\end{verbatim}

Here we include all the polynomials in the defining equations in Lemma \ref{lLatticeEquationsbis}, so equation (\ref{R2bis}) and equation (\ref{Sbis}) in our specific case. We have also included the condition $\gcd(t_1, t_2, t_3, t_4) = 1$ in the last equation. SageMath gives that $3$ is always contained in the Gr\"obner basis, which readily implies the lemma. 
\end{proof}

\section{\texorpdfstring{Restricting the possible values for $a$}{Restricting the possible values for a}}
We want to tighten the interval $1\leq a \leq 5$ appearing in Step \ref{1<b<a<6}. 

\subsection{\texorpdfstring{The case $a = 5$ is impossible}{The case a = 5 is impossible}}
\label{a=5isimpossible} 
We will improve Step \ref{1<b<a<6} into

\begin{step}
\label{1<b<a<5} 
With the notations and hypotheses of Step \ref{1<b<a<6}, one has the inequalities
$$
1 \leq b \leq a \leq 4.
$$
\end{step}

We give a proof by contradiction. So we suppose that $a = 5$. Recall the definitions of $g_{1, 3}$ and $g_{2, 4}$ from \eqref{defgij}. Then \eqref{D2/nu=a/b} and \eqref{condforaandbbis} imply
$$
\frac{D^2}{\nu} = \frac{a}{b} = \frac{5}{b} \quad (5 \nmid b),
$$
so we know that $5$ divides $D$. For every $\sigma \in \mathfrak S$, one has 
\begin{equation}
\label{5dividesindex}
5\mid [\Z^2: \Lambda(\sigma)]
\end{equation}
by Remark \ref{useful}. By Lemmas \ref{length3}, \ref{length4}, \ref{length5} and \ref{length6}, we deduce that the main covering $(\Lambda(\sigma))_{\sigma \in \mathfrak S}$ necessarily coincides with the covering $\mathcal C_{5^6}$. In particular, all the lattices have index $5$ but we will not use this equality in the proof. So far, we know that
$$
v_5(D) \geq 1 \text{ and } v_5(\nu) = 2v_5(D) - 1.
$$
In all cases we have $5 \mid d_2 D/g_{2, 4}$, so we are allowed to consider the $5$--enveloping lattices $ \tilde{\Lambda}^{(5)}(\sigma)$ as defined in \eqref{defttilde}, \eqref{954}, \eqref{958} and \eqref{962}. In particular, we have the inclusion
\begin{equation}
\label{1300}
\Lambda(\sigma) \subseteq \tilde{\Lambda}^{(5)}(\sigma) \quad (\sigma \in \mathfrak S^\flat)
\end{equation}
with
\begin{equation}
\label{1283bis}
\tilde{\Lambda}^{(5)}(\sigma) : p_{1, \sigma}(\tilde{\bf t}) x_1 + g_{2, 4} p_{2, \sigma}(\tilde{\bf t}) x_2 \equiv 0 \bmod 5 \quad (\sigma \in \mathfrak S^\flat),
\end{equation}
and
\begin{equation}
\label{1308}
\tilde{\Lambda}^{(5)}({\rm id}) = \Lambda_5.
\end{equation}

\subsubsection{\texorpdfstring{When $5 \nmid g_{2, 4}$}{When 5 does not divide g24}} 
\label{paragraph0} 
By \eqref{generalnotation*} and by \eqref{eZxnx45bis} there is at most one $\sigma \in \mathfrak S $ such that $5 \mid p_{1, \sigma}(\tilde{\bf t})$ and $5 \mid p_{2, \sigma}(\tilde{\bf t})$ since $Z_{\tilde{\bf t}, \text{top}}(5) \in \{0, 1\}$. We denote by $\sigma_0$ this potential element (necessarily different from ${\rm id}$) of $\mathfrak S$. The main covering \eqref{maincovering} becomes either
\begin{equation}
\label{3203**}
\Z^2 = \Lambda(\sigma_0) \cup \bigcup_{\sigma \in \mathfrak{S} - \{\sigma_0\}}\tilde{\Lambda}^{(5)}(\sigma)
\end{equation}
or
\begin{equation}
\label{3207**}
\Z^2 = \bigcup_{\sigma \in \mathfrak{S}} \tilde{\Lambda}^{(5)}(\sigma)
\end{equation}
depending on the existence of $\sigma_0$. 

Consider the first covering \eqref{3203**}: the first lattice on the RHS has an index divisible by $5$ by \eqref{5dividesindex} . We apply Lemma \ref{lZxnx}, equation \eqref{eZxnx45bis}: with the notations there, we have $Z_{\tilde{\bf t}, {\rm top}}(5) = 1$ and $n_{\tilde{\bf t}, {\rm top}}(5) \leq 2$. Then by Remark \ref{equalityoflattices}, the last union contains at most two distinct lattices and their index is divisible by $5$. The covering \eqref{3203**} produces a contradiction, since $\Z^2$ is not equal to the union of three lattices with index divisible by $5$, see Lemma \ref{sumofdensities}.

We follow the same strategy to deduce that the decomposition \eqref{3207**} is also impossible. Since $\sigma_0$ does not exist, we have $Z_{\tilde{\bf t}, {\rm top}}(5) = 0$, so $n_{\tilde{\bf t}, {\rm top }}(5) \leq 3$. Once more we obtain a covering of $\Z^2$ by three lattices with index divisible by $5$.

\subsubsection{\texorpdfstring{When $5 \mid g_{2, 4}$}{When 5 divides g24}} 
With this condition, the coefficient of $x_2$ in \eqref{1283bis} is $0 \bmod 5$. Therefore the equations defining the $5$--enveloping lattice \eqref{954} are simplified as
$$
\tilde{\Lambda}^{(5)}(\sigma) : p_{1, \sigma}(\tilde{\bf t}) x_1 \equiv 0 \bmod 5 \quad (\sigma \in \mathfrak S).
$$ 
However, by \eqref{propgij}, we certainly have 
$$
\gcd(t_1, t_3, 5) = 1. 
$$
By Lemma \ref{lTripleVanishing}, there exists some $\mathfrak S^\flat_{\leq 2}$ (see definition \eqref{defS<k}) such that for every $\sigma \not\in \mathfrak S^\flat_{\leq 2}$, one has $p_{1, \sigma}(\tilde {\bf t}) \not\equiv 0 \bmod 5$. For those $\sigma$ we obviously have $\tilde{\Lambda}^{(5)}(\sigma ) \subseteq \Lambda_5$. After these observations, the main covering \eqref{maincovering} gives the equality
$$
\Z^2 = \Lambda^? \cup \Lambda^? \cup \Lambda_5,
$$ 
which is nonsense by Lemma \ref{length3}. The proof of Step \ref{1<b<a<5} is now complete.

\subsection{\texorpdfstring{The case $a = 3$ is impossible}{The case a = 3 is impossible}} 
\label{a=3impossible}
We continue our study of the possible values of $a$ by improving Step \ref{1<b<a<5} into

\begin{step}
\label{1<b<a<5,aneq3} 
With the notations and hypotheses of Step \ref{1<b<a<6}, one has the relations
$$
1 \leq b \leq a \leq 4 \quad \textup{ and } \quad a \neq 3.
$$
\end{step}

The proof of this step has several similarities with the proof of Step \ref{1<b<a<5}, in particular it works by contradiction. So we suppose that $a = 3$. By the equality 
\begin{equation}
\label{3/b}
\frac{D^2}{\nu} = \frac{a}{b} = \frac{3}{b}
\end{equation}
we know that $3$ divides $D$ and that
\begin{equation}
\label{3dividesindex*}
3 \mid [\Z^2: \Lambda(\sigma)] \quad \text{ for all } \sigma \in \mathfrak S
\end{equation}
by Remark \ref{useful}. We have $3 \mid d_2 D/g_{2, 4}$, so we are allowed to consider the $3$--enveloping lattices $\tilde{\Lambda}^{(3)}(\sigma)$. We have the inclusion
$$
\Lambda(\sigma) \subseteq \tilde{\Lambda}^{(3)}(\sigma) \quad (\sigma \in \mathfrak S^\flat)
$$
with
\begin{equation}
\label{1283}
\tilde{\Lambda}^{(3)}(\sigma) : p_{1, \sigma}(\tilde{\bf t}) x_1 + g_{2, 4} p_{2, \sigma}(\tilde{\bf t}) x_2 \equiv 0 \bmod 3 \quad (\sigma \in \mathfrak S^\flat)
\end{equation}
and
$$
\tilde{\Lambda}^{(3)}({\rm id}) = \Lambda_3.
$$

\subsubsection{\texorpdfstring{When $3 \nmid g_{2, 4}$}{When 3 does not divide g24}}
We appeal to Lemma \ref{lZxnx} with $x = 3$.

\vskip .2cm
$\bullet$ If $Z_{\tilde{\mathbf{t}}, \text{top}}(3) + n_{\tilde{\mathbf{t}}, \text{top}}(3) \leq 3$ and $ Z_{\tilde{\mathbf{t}}, \text{top}}(3) \leq 1$. In this situation we follow the argument presented for $a = 5$, as it was done in \S \ref{paragraph0}, leading to the impossibility of the covering \eqref{maincovering}. Indeed, we would obtain a covering of $\Z^2$ by three lattices with index $\geq 3$ by \eqref{3dividesindex*}. This is impossible by Lemma \ref{sumofdensities}.

\vskip .2cm
$\bullet$ If $Z_{\tilde{\mathbf{t}}, \text{top}}(3) = 5$. This subcase has no analogue in the case where $a = 5$. Under this assumption, we know that 
\[
\tilde{\mathbf{t}} \bmod 3 \in \mathcal T_3 := \{(0, 1, 0, 1), (0, 2, 0, 2), (1, 1, 1, 1), (2, 2, 2, 2), (1, 2, 1, 2), (2, 1, 2, 1)\}
\]
thanks to \eqref{eBadt} from Lemma \ref{lZxnx}. We have $t_1\widetilde{t_4} - \widetilde{t_2}t_3 \equiv 0 \bmod 3$. This implies that $3 \mid \frac{d_2}{g_{2, 4}}$ and therefore 
$$
9 \mid \frac{d_2D}{g_{2, 4}}.
$$
This divisibility property allows us to work with the $9$--enveloping lattices $\Lambda(\sigma) \subseteq \tilde{\Lambda}^{(9)}(\sigma)$. The $9$--enveloping lattices are given by the equations
$$
\tilde{\Lambda}^{(9)}(\sigma) : \widetilde{p_{1, \sigma}}(\tilde{\mathbf t}) x_1+ g_{2, 4} \widetilde {p_{2, \sigma}}(\tilde{\bf t}) x_2 \equiv 0 \bmod 3 \quad (\sigma \in \mathfrak S),
$$
where the notations are introduced before Lemma \ref{messfor9}. The inequalities contained in that lemma play the same r\^ole as the inequalities given by the first part of \eqref{3335}. The rest of the proof is the same: the covering \eqref{maincovering} becomes a covering of $\Z^2$ by three lattices of index divisible by $3$.
This is impossible due to Lemma \ref{sumofdensities}.

\subsubsection{\texorpdfstring{When $3 \mid g_{2, 4}$}{When 3 divides g24}}
\label{5.2.2}
By \eqref{propgij} we have $3 \nmid g_{1, 3}$. In this case, the $3$--enveloping lattices $\tilde{\Lambda}^{(3)}(\sigma)$ from \eqref{1283} become
\begin{equation}
\label{1452}
\tilde{\Lambda}^{(3)}(\sigma) : p_{1, \sigma}(\tilde{\bf t}) x_1 \equiv 0 \bmod 3 \quad (\sigma \in \mathfrak S^\flat)
\end{equation}
and $\tilde{\Lambda}^{(3)}({\rm id}) = \Lambda_3$. We apply the last part of Lemma \ref{lTripleVanishing}. We will distinguish cases based on 
\begin{align}
\label{eBad3}
|\{\sigma \in \mathfrak{S}^\flat : p_{1, \sigma}(\tilde{\bf t}) \equiv 0 \bmod 3\}| \in \{0, 1, 2, 5\}.
\end{align}
Suppose that this cardinality is at most $2$. Returning to \eqref{1452}, we see that, for all other $\sigma$ outside this set, one has $\tilde{\Lambda}^{(3)}(\sigma) \subseteq \Lambda_3$. With these properties, we deduce that the main covering \eqref{maincovering} leads to
\begin{equation}
\label{1495}
\Z^2 = \Lambda^? \cup \Lambda^? \cup \Lambda_3,
\end{equation}
which is impossible since the two $\Lambda^?$ have an index $\geq 3$ by \eqref{3dividesindex*}.

Now suppose that the cardinality in equation (\ref{eBad3}) is $5$. Then we have $\mathbf{t} \in \mathcal{T}_3^\ast$ by the last part of Lemma \ref{lTripleVanishing}, so $3$ divides $p_{1, \sigma}(\tilde{\bf t})$, $p_{2, \sigma}(\tilde{\bf t})$ but also $3$ divides $d_2/g_{2, 4}$. Thus we may consider the $9$--enveloping lattices
$$
\tilde{\Lambda}^{(9)}(\sigma) : p_{1, \sigma}(\tilde{\bf t}) x_1 + g_{2, 4} p_{2, \sigma}(\tilde{\bf t}) x_2 \equiv 0 \bmod 9.
$$
Because $\mathbf{t} \in \mathcal{T}_3^\ast$, we know that $3$ divides $p_{2, \sigma}(\tilde{\bf t})$. Since $3$ also divides $g_{2, 4}$, the above equation is equivalent to
$$
\tilde{\Lambda}^{(9)}(\sigma) : p_{1, \sigma}(\tilde{\bf t}) x_1 \equiv 0 \bmod 9.
$$
We apply the first part of Lemma \ref{lTripleVanishing} with $p = 9$. Following the same argument as above leads us to the non-existent cover (\ref{1495}).

The proof of Step \ref{1<b<a<5,aneq3} is complete.

\subsection{\texorpdfstring{The case $a = 4$ is impossible}{The case a = 4 is impossible}} 
We continue our study of the possible values of $a$ by improving Step \ref{1<b<a<5,aneq3} into

\begin{step}
\label{1<b<a<3*} 
With the notations and hypotheses of Step \ref{1<b<a<6}, one has the relations
$$
1\leq b \leq a \leq 2.
$$
In particular, the ratio $D^2/\nu$ can only be equal to $1$ or to $2$.
\end{step}

We still need to analyze $a = 4$ to arrive at a contradiction. So we suppose that $a=4$. As a consequence of the equality $D^2/\nu = 4/b$, with $b$ odd, we obtain
$$
4 \mid [\Z^2: \Lambda(\sigma)] \quad \text{ for all } \sigma \in \mathfrak S
$$
by Remark \ref{useful}. Referring to Lemma \ref{length5}, we deduce that the main covering \eqref{maincovering} must correspond to the minimal covering ${\mathcal C}_{4^6}$, so all lattices have index $4$. In particular, we have
\begin{equation}
\label{1622}
D = 4 \quad \text{ and } \quad 2^2 \Vert \nu.
\end{equation}
Since we have $4 \mid d_2D/g_{2, 4}$, we consider the $4$--enveloping lattice $\Lambda(\sigma) \subseteq \tilde{\Lambda}^{(4)}(\sigma)$ (see \eqref{954}) defined by
\begin{equation}
\label{1519}
\tilde{\Lambda}^{(4)}(\sigma) : p_{1, \sigma}(\tilde{\bf t}) x_1 + g_{2, 4}\,p_{2, \sigma}(\tilde{\bf t}) x_2 \equiv 0 \bmod 4 \quad (\sigma \in \mathfrak S^\flat)
\end{equation}
and
$$
\tilde{\Lambda}^{(4)} ({\rm id}) = \Lambda_4.
$$
We subdivide our argument according to the parity of $g_{2, 4}$.

\subsubsection{\texorpdfstring{When $2 \nmid g_{2, 4}$}{When 2 does not divide g24}}
We apply inequality \eqref{2795} of Lemma \ref{lZxnx}, which leads to two cases.

If $Z_{\tilde{\bf{t}}, \text{top}}(4) = 0$, then among the six lattices $\tilde{\Lambda}^{(4)}(\sigma)$, at most four are distinct, and they all have an index equal to $4$. So the main covering gives a covering of $\Z^2$ by at most four lattices with index $4$. This is absurd.

If $Z_{\tilde{\bf{t}}, \text{top}}(4) = 1$, write $\sigma_0$ for the corresponding automorphism. Here also, we arrive at a covering of $\Z^2$ by four lattices of index $4$, one of these being $\Lambda(\sigma_0)$, again a contradiction.

\subsubsection{\texorpdfstring{When $2 \mid g_{2, 4}$}{When 2 divides g24}}
In this case, the equation defining $\tilde{\Lambda}^{(4)}(\sigma)$ degenerates. We rather work with the second equation of \eqref{generalnotation}. Since $2\mid g_{2, 4} \mid d_2$ and since $v_2(D) = v_2(\nu) = 2$ by \eqref{1622}, we have the following inclusion
$$
\Lambda(\sigma) \subseteq \tilde{M}^{(2)}(\sigma) \quad (\sigma \in \mathfrak S^\flat),
$$
where $\tilde{M}^{(2)}(\sigma)$ is the lattice defined by
$$
\tilde{M}^{(2)}(\sigma) : P_{1, \sigma}(t_1, t_3) x_1 \equiv 0 \bmod 2.
$$
Since $(t_1, t_2, t_3, t_4)$ are coprime, we necessarily have $(t_1, t_3) \not \equiv (0, 0) \bmod 2$. By the explicit definition of the polynomials $P_{1, \sigma}$, we deduce
\begin{itemize}
\item when $(t_1, t_3) \equiv (1, 0) \bmod 2$, we have $\tilde M^{(2)}(\sigma) = \Lambda _2$, for the four cases 
$$
\sigma \in \{T_2^{-1} R T_2, \, T_2^{-1} R^2 T_2, \,T_2^{-1} S T_2, \, T_2^{-1} RS T_2\}.
$$ 
Since $\Lambda({\rm id}) \subseteq \Lambda_2$, the main covering \eqref{maincovering} gives the equality
$$
\Z^2 = \Lambda_2 \cup \Lambda^?,
$$
which is impossible.
\item when $(t_1, t_3) \equiv (0, 1) \bmod 2$, we also have the equality $\tilde{M}^{(2)}(\sigma) = \Lambda_2$ for four $\sigma$ and we arrive at a contradiction.
\item when $(t_1, t_3) \equiv (1, 1) \bmod 2$, the equality $\tilde M^{(2)}(\sigma) = \Lambda_2$ holds for four $\sigma$, and here also we reach a dead end.
\end{itemize}
The proof of Step \ref{1<b<a<3*} is complete.

\section{The dual covering} 
The strong equality $D^2/\nu \in \{1, 2\}$ stated in Step \ref{1<b<a<3*} however gives no detail on $D$ and $\nu$ separately. In the above section we exploited the properties of the main covering \eqref{maincovering} to gain information on $a$ and $b$. In order to understand the parameters $D$ and $\nu$, we will incorporate the study the dual covering \eqref{dualcovering}.

\subsection{\texorpdfstring{Description of the lattices $\Gamma$ by their equations}{Description of the lattices Gamma by their equations}} 
\begin{lemma}
\label{Lemma6.1*} 
For $\sigma \in \mathfrak S$, let $\Gamma (\sigma)$ be the lattice defined in \eqref{defGammabis}. Then we have the equations
$$
\Gamma(\textup{id}): 
\begin{cases}
D x_1 &\equiv 0 \bmod 1 \\
D x_2 &\equiv 0 \bmod \nu,
\end{cases}
$$
 
$$
\Gamma(T_2^{-1}RT_2):
\begin{cases}
D \nu (t_1t_2 + t_2t_3 + t_3t_4) x_1 + D (t_2^2 + t_2t_4 + t_4^2) x_2 &\equiv 0 \bmod d_2 \nu \\
D \nu (t_1^2 + t_1t_3 + t_3^2) x_1 + D (t_1t_2 + t_1t_4 + t_3t_4) x_2 &\equiv 0 \bmod d_2 \nu,
\end{cases}
$$
 
$$
\Gamma(T_2^{-1}R^2T_2):
\begin{cases}
D \nu (t_1t_2 + t_1t_4 + t_3t_4) x_1 + D (t_2^2 + t_2t_4 + t_4^2) x_2 &\equiv 0 \bmod d_2 \nu \\
D \nu (t_1^2 + t_1t_3 + t_3^2) x_1 + D (t_1t_2 + t_2t_3 + t_3t_4) x_2 &\equiv 0 \bmod d_2 \nu,
\end{cases}
$$

$$
\Gamma(T_2^{-1}ST_2):
\begin{cases}
D \nu (t_3t_4 - t_1t_2) x_1 + D (t_4^2 - t_2^2) x_2 &\equiv 0 \bmod d_2 \nu \\
D \nu (t_1^2 - t_3^2) x_1 + D (t_1t_2 - t_3t_4) x_2 &\equiv 0 \bmod d_2 \nu,
\end{cases}
$$

$$
\Gamma(T_2^{-1}RST_2):
\begin{cases}
D \nu (t_1t_2 + t_1t_4 + t_2t_3) x_1 + D (t_2^2 + 2t_2t_4) x_2 &\equiv 0 \bmod d_2 \nu \\
D \nu (t_1^2 + 2t_1t_3) x_1 + D (t_1t_2 + t_1t_4 + t_2t_3) x_2 &\equiv 0 \bmod d_2 \nu,
\end{cases}
$$
and
$$
\Gamma(T_2^{-1}R^2ST_2):
\begin{cases}
D \nu (t_1t_4 + t_2t_3 + t_3t_4) x_1 + D (t_4^2 + 2t_2t_4) x_2 &\equiv 0 \bmod d_2 \nu \\
D \nu (t_3^2 + 2t_1t_3) x_1 + D (t_1t_4 + t_2t_3 + t_3t_4) x_2 &\equiv 0 \bmod d_2 \nu.
\end{cases}
$$
\end{lemma}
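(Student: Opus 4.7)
The plan is a direct matrix computation, entirely analogous to Lemma \ref{lLatticeEquationsbis}. The only substantive difference is that $\Gamma(\sigma) = L(\sigma\gamma)$ involves $\gamma$ multiplied on the right, whereas $\Lambda(\sigma) = L(\gamma^{-1}\sigma)$ had $\gamma^{-1}$ on the left; this changes how the scalars $D$ and $\nu$ distribute across the two coordinates, but introduces no new phenomena.

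First I would fix notation: write $T_2 = \begin{pmatrix} t_1 & t_2 \\ t_3 & t_4 \end{pmatrix}$ with $\det T_2 = \varepsilon d_2$ ($\varepsilon \in \{\pm 1\}$), so that
\[
T_2^{-1} = \frac{\varepsilon}{d_2}\begin{pmatrix} t_4 & -t_2 \\ -t_3 & t_1 \end{pmatrix}.
\]
The overall factor $\varepsilon$ will disappear from the final (homogeneous) congruences, and in any case by \eqref{obvious0} it is harmless. Then, for each $M \in \{\mathrm{id}, R, R^2, S, RS, R^2S\}$, I would compute $(T_2^{-1} M T_2)\gamma$ applied to $(x_1, x_2)^T$ in elementary steps: apply $\gamma$ to get $(Dx_1,\,(D/\nu)x_2)^T$; then apply $T_2$; then $M$; and finally $T_2^{-1}$. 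Imposing that the resulting vector lies in $\Z^2$ and clearing the denominator $d_2\nu$ yields the pair of congruences modulo $d_2\nu$ stated in the lemma.

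For illustration, the case $\sigma = T_2^{-1}RT_2$ unfolds to
\[
(T_2^{-1}RT_2)\,\gamma \begin{pmatrix}x_1\\x_2\end{pmatrix} = \frac{\varepsilon}{d_2}\begin{pmatrix} (t_1t_2+t_2t_3+t_3t_4)\,Dx_1 + (t_2^2+t_2t_4+t_4^2)\,(D/\nu)x_2 \\ -(t_1^2+t_1t_3+t_3^2)\,Dx_1 - (t_1t_2+t_1t_4+t_3t_4)\,(D/\nu)x_2 \end{pmatrix},
\]
and multiplication of both components by $d_2\nu$ produces exactly the congruences stated for $\Gamma(T_2^{-1}RT_2)$. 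The case $\sigma = \mathrm{id}$ is immediate, and the four remaining cases proceed identically; notably the polynomials $p_{i,\sigma}(\mathbf{t})$ and $P_{i,\sigma}(\mathbf{t})$ from \eqref{generalnotation} reappear verbatim, since they depend only on the conjugated dihedral element and not on $\gamma$.

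There is no genuine obstacle; the argument is routine bookkeeping. A useful consistency check is that Lemma \ref{lLatticeEquationsbis} records these same polynomials with modulus $d_2 D$ and the factor $\nu$ attached to the second equation, while here the modulus is $d_2 \nu$ and the factor $\nu$ attaches instead to the coefficient of $x_1$ in both equations. This is precisely the rearrangement one expects when replacing $\gamma^{-1}\sigma$ by $\sigma\gamma$ with $\gamma = \mathrm{diag}(D, D/\nu)$, which in effect swaps the roles of $D$ and $\nu$ between the modulus and the coefficient of $x_1$.
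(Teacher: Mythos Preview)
Your proposal is correct and follows essentially the same approach as the paper: both observe that $\Gamma(\sigma)=L(\sigma\gamma)$ differs from $\Lambda(\sigma)=L(\gamma^{-1}\sigma)$ only by the side on which the diagonal matrix $\gamma$ acts, so the same quadratic forms $p_{i,\sigma}(\mathbf{t})$, $P_{i,\sigma}(\mathbf{t})$ reappear with $D$ and $\nu$ redistributed, and the rest is a direct computation. Your explicit verification of the $T_2^{-1}RT_2$ case and the consistency remark at the end are in fact slightly more detailed than the paper's own terse justification.
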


\begin{proof}
One can prove this by a direct computation. Instead we may also appeal to Lemma \ref{lLatticeEquationsbis} to exploit the obvious link between $\Lambda(\sigma)$ and $\Gamma(\sigma)$: to obtain $\Gamma (\sigma)$ we postmultiply by $\gamma$ but to obtain $\Lambda(\sigma)$ we premultiply by $\gamma^{-1}$ (see \eqref{defLambdabis} and \eqref{defGammabis}). But the matrices $\gamma$ and $\gamma^{-1}$ are diagonal and this property explains that the variation in these two calculations reduces to putting $D$ and $\nu$ in different places without perturbating the quadratic forms $p_{i,\sigma}({\bf t})$ and $P_{i, \sigma}({\bf t})$ in $\mathbf t$ (see \eqref{generalnotation}).
\end{proof}

\subsection{\texorpdfstring{Key claim: finitely many possibilities for $D$}{Key claim: finitely many possibilities for D}}
We now translate Step \ref{1<b<a<3*} in terms of $(D, \nu)$.

\begin{step}
With the notations and hypotheses of Step \ref{1<b<a<6} one has
\begin{equation}
\label{afirstlist*}
(D, \nu) \in \{(2, 4),\, (3,9),\, (4, 16),\, (5, 25),\, (2, 2),\, (4, 8),\, (6, 18), \, (8, 32), \, (10, 50)\}.
\end{equation}
\end{step}

\begin{proof}
Since $D \mid \nu$, we write $\nu =D \mu$ for some integer $\mu \geq 1$. Lemma \ref{atmostfive} implies the inequality $1\leq \mu \leq 5$. Since $D^2/\nu = D/\mu= a/b$, Step \ref{1<b<a<3*} leads to the equality $(D, \nu) = (\mu, \mu^2)$ or $(D, \nu) = (2\mu, 2\mu^2)$. We recover the list given in \eqref{afirstlist*}, since the pair $(D, \nu) = (1, 1)$ is not allowed by \eqref{conditionsforDandnu}.
\end{proof}

We continue to shrink the set of possible values of the pair $(D, \nu)$ by forbidding the prime $p=5$ as a possible divisor of $D$. We have 

\begin{step}
\label{1696bis}
With the notations and hypotheses of Step \ref{1<b<a<6} one has
\begin{equation}
\label{asecondlist**}
(D, \nu) \in \{(2, 4), \, (3,9), \, (4, 16), \, (2, 2), \, (4, 8), \, (6, 18), \, (8, 32)\}.
\end{equation}
\end{step}

\begin{proof}
For the sake of contradiction suppose that $5 \mid D$. By \eqref{798} we still have $[\Z^2 : \Lambda(\sigma)] \geq 5$. The proof is similar to the proof of Step \ref{1<b<a<5} (see \S \ref{a=5isimpossible}) and we use the same notations. So we appeal to the $5$--enveloping lattices based on the relations \eqref{1300}, \eqref{1283bis} and \eqref{1308}. We mimic the proof of Step \ref{1<b<a<5} based on a discussion of the value of $\gcd(5, g_{2, 4})$ and on Lemmas \ref{lZxnx} and \ref{lTripleVanishing}. We then prove that the main covering \eqref{maincovering} is impossible. So $D$ can not be divisible by $5$. Eliminating the corresponding $D$ from the list \eqref{afirstlist*}, we find the list \eqref{asecondlist**} and Step \ref{1696bis} is proved. 
\end{proof}

We continue to shorten the list \eqref{asecondlist**} by proving

\begin{step}
With the notations and hypotheses of Step \ref{1<b<a<6} one has
\[
(D, \nu) \in \{(2, 4), \, (3,9), \, (4, 16), \, (2, 2), \, (4, 8), \, (6, 18)\}.
\]
\end{step}

\begin{proof} 
We suppose that $D = 8$ and $\nu = 32$ to arrive at a contradiction. By \eqref{798}, we have $[\Z^2 : \Lambda(\sigma)] \geq 4$ for $\sigma \in \mathfrak{S}$. As a a direct consequence of \eqref{Idbis} we also have $[\Z^2: \Lambda({\rm id})] = 8$. The main covering \eqref{maincovering} is then built with one component with index $8$ and five components with index at least $4$. From such a covering, it is impossible to extract a minimal covering as described in Lemmas \ref{length4}, \ref{length5} and \ref{length6}. So the pair $(D, \nu) = (8, 32)$ is not allowed.
\end{proof}

Another progress is the following one

\begin{step}
\label{1672*}
With the notations and hypotheses of Step \ref{1<b<a<6} one has
\begin{equation}
\label{asecondlistbis}
(D, \nu) \in \{(2, 4), \, (4, 16), \, (2, 2), \, (4, 8)\}.
\end{equation}
\end{step}

\begin{proof} 
We suppose that $(D, \nu) \in \{(3, 9), (6, 18)\}$. In both cases we have $3 = \nu/D$. By \eqref{798} we have the lower bound
\begin{equation}
\label{index>3}
[\Z^2 : \Lambda(\sigma)] \geq 3.
\end{equation} 
In \S \ref{a=3impossible}, for the proof of Step \ref{1<b<a<5,aneq3} the hypothesis $a = 3$ led to the property \eqref{3dividesindex*}, which implies \eqref{index>3}. The proof of Step \ref{1<b<a<5,aneq3} is readily adapted to this slight generalization.
\end{proof}

\subsection{\texorpdfstring{Restricting the prime divisors of $d_2$}{Restricting the prime divisors of d2}} 
The index of the lattices $\Lambda(\sigma)$ and $\Gamma (\sigma)$ obviously depends on the value of $d_2$ (see Lemmas \ref{lLatticeEquationsbis} and \ref{Lemma6.1*}). We now restrict the possible values of $d_2$ by the following step.

\begin{step}
\label{1762bis} 
With the above hypotheses and notations, we have the following restrictions for the triples $(d_2, D, \nu)$:
\begin{equation}\label{1693}
\begin{cases}
(D, \nu) \in \{(2, 4), \, (4, 16),\, \, (2, 2),\, (4, 8)\}, \\ p\mid d_2 \Rightarrow p \in \{2, 3\}.
\end{cases}
\end{equation}
\end{step}

\begin{proof}
For the sake of contradiction, suppose that there exists some prime $p \mid d_2$ with $p \geq 5$. Recall that $g_2(\sigma) = d_2/d_2(\sigma)$ (see \S \,\ref{indicator}). If $\sigma \neq \sigma'$ do not both have order $3$, then we have 
\begin{equation}
\label{3768bis}
\gcd(g_2(\sigma), g_2(\sigma')) \in \{1, 3\}
\end{equation}
by Lemma \ref{lSX3.2}. We remark that $p \nmid g_2(\sigma) \Rightarrow p \mid d_2(\sigma)$. By this remark and by \eqref{3768bis}, we are allowed to split our proof in two cases:
\begin{itemize}
\item {\bf Case 1:} If $p$ divides at most one $g_2(\sigma)$ ($\sigma \in \mathfrak S^\flat$), then $p$ does not divide at least four $g_2(\sigma)$, so $p \mid d_2(\sigma)$ for at least four different $\sigma \in \mathfrak{S}^\flat$, 
\item {\bf Case 2:} If $p$ divides at least two $g_2(\sigma)$ ($\sigma \in \mathfrak S^\flat$), then the number of these $\sigma$ is exactly two and they both have order three. In particular, $p \mid d_2(\sigma)$ for $\sigma$ in the set $\{T_2^{-1}ST_2, T_2^{-1}RST_2, T_2^{-1}R^2ST_2\}$, which are precisely the elements of order two.
\end{itemize}

\subsubsection{Case 1} 
We use the functions $d_2(\sigma) $ and $g_2(\sigma)$ to write in another way the equations of the lattices $\Lambda(\sigma)$ for $\sigma \in \mathfrak S^\flat$ when $p \mid d_2(\sigma)$. Consider the equation of such a lattice $\Lambda(\sigma)$ schematically written as in Lemma \ref{lLatticeEquationsbis} (see \S \ref{indicator})
$$ 
\Lambda(\sigma):
\begin{cases}
a' x_1 + b' x_2 & \equiv 0 \bmod d_2 D\\
\nu (c'x_1 +d' x_2) & \equiv 0 \bmod d_2 D.
\end{cases}
$$
Dividing by $g_2(\sigma)$ and using the property \eqref{g2=gcd}, we obtain, for certain coprime integers $(a, b, c, d)$, the equivalent system of equations
\begin{equation}
\label{3788}
\Lambda(\sigma): 
\begin{cases} 
ax_1 + b x_2 & \equiv 0 \bmod d_2(\sigma) D\\
\nu (c x_1 + d x_2) & \equiv 0 \bmod d_2(\sigma) D.
\end{cases}
\end{equation}
But $p$ divides $d_2(\sigma)D$ and it is coprime with $\gcd (a, b, \nu c, \nu d)$ as a consequence of \eqref{asecondlistbis}. This implies that $\Lambda(\sigma)$ ($\sigma \in \mathfrak S^\flat$) has an index divisible by $p$. When $p \geq 7$, the covering \eqref{maincovering} is nonsense: four of the six lattices would have an index divisible by $p\geq 7$ and we do not encounter an associated minimal covering in Lemmas \ref{length3}, \ref{length4}, \ref{length5} and \ref{length6}. In the case $p = 5$ we also have to avoid the minimal covering $\mathcal C_{5^6}$ of Lemma \ref{length6}, where the indices of all the lattices are equal to $5$. But in our case, we have the equality (see Remark \ref{rLidSimple})
\begin{equation}
\label{indexofidbis}
[\Z^2: \Lambda({\rm id})] = D,
\end{equation}
which certainly does not divide $5$ by \eqref{asecondlistbis}. 

\subsubsection{Case 2} 
We make the same considerations as in Case 1. Since there are three $\sigma$ with order $2$, the equations \eqref{3788} imply that in the covering \eqref{maincovering} at least three lattices have an index divisible by $p\geq 5$. The lattice $\Lambda({\rm id})$ has an index which is divisible by no prime $\geq 5$, by \eqref{indexofidbis} and by Step \ref{1672*}. By considering Lemmas \ref{length3}, \ref{length4}, \ref{length5} and \ref{length6}, we deduce that in the main covering \eqref{maincovering} the three lattices associated with $\sigma$ of order 2 are superfluous, which means that the extracted minimal covering has necessarily length three. So this covering leads to the equality
\[
\bigcup_{\sigma \in \{\text{id}, T_2^{-1}RT_2, T_2^{-1}R^2T_2\}} \Lambda(\sigma) = \Z^2.
\]
But this equality contradicts \eqref{backtoC3}. The proof of Step \ref{1762bis} is complete.
\end{proof}

\subsection{\texorpdfstring{Further restrictions on $d_2 D$, part 1}{Further restrictions on d2 D, part 1}}
\label{part3}
We continue to restrict the possible values of the triples $(d_2, D, \nu)$ by eliminating the value $D = 4$ in the set \eqref{1693}.

\begin{step}
\label{Dneq4} 
With the above hypotheses and notations, we have the following restrictions for the triples $(d_2, D, \nu)$:
\begin{equation}
\label{1752}
\begin{cases}
(D, \nu) \in \{(2, 4), \, (2, 2)\}, \\ p\mid d_2 \Rightarrow p \in \{2, 3\}.
\end{cases}
\end{equation}
\end{step}

For the sake of contradiction, we suppose that $D = 4$. We recall the notations \eqref{defgij}, \eqref{propgij} and \eqref{defttilde} and we notice 
$$
4 \mid d_2D/g_{2, 4}.
$$
Thus we are allowed to use the $4$--enveloping lattices $\Lambda(\sigma) \subseteq \tilde{\Lambda}^{(4)}(\sigma)$ for $\sigma \in \mathfrak S^\flat$ (see \eqref{954}, \eqref{958}, \eqref{962} and \eqref{1519}). The main covering \eqref{maincovering} implies the following covering
\begin{equation}
\label{1754*}
\Z^2 = \Lambda_4 \cup \bigcup_{\sigma \in \mathfrak S^\flat} \tilde{\Lambda}^{(4)}(\sigma), 
\end{equation}
since $\Lambda({\rm id}) = \Lambda_D = \Lambda_4$. We will use the following facts from Lemma \ref{lZxnx}, with $x = 4$, to shorten the number of lattices participating in the covering \eqref{1754*} in order to arrive at a contradiction. 

\begin{itemize}
\item[(i)] if $2 \nmid g_{2, 4}$, we apply Lemma \ref{lZxnx}. The second part of \eqref{2795} of Lemma \ref{lZxnx}, for $x = 4$, shows that there is at most one $\sigma \in \mathfrak S^\flat$ such that $2 \mid p_{1, \sigma}(\tilde{\bf t})$ and $2 \mid p_{2, \sigma}(\tilde{\bf t})$. We denote this potential element by $\sigma_0$ and its existence corresponds to the equality $Z_{\tilde{\bf t}, {\rm top}}(4) = 1$. By Remark \ref{equalityoflattices} and by the first part of \eqref{2795}, the main covering \eqref{1754*} becomes
\begin{gather}
\label{kappa0exists**}
\text{ if }\sigma_0 \text { exists: } \Z^2 = \Lambda_4 \cup \Lambda(\sigma_0) \cup \, \bigcup_{\sigma \in \mathfrak S_{\leq 2}^\flat} \tilde{\Lambda}^{(4)}(\sigma) \\
\label{kappa0doesnot**}
\text{ if } \sigma_0 \text { does not exist: } \Z^2 = \Lambda_4 \cup \, \bigcup_{\sigma \in \mathfrak S_{\leq 3}^\flat} \tilde{\Lambda}^{(4)}(\sigma) 
\end{gather} 
for some $\mathfrak S_{\leq 2}^\flat$ and $\mathfrak S_{\leq 3}^\flat$ (see \eqref{defS<k}). In these two coverings, there are at most 4 lattices, all different from $\Z^2$. Since in these formulas the enveloping lattices $ \tilde{\Lambda}^{(4)}(\sigma)$ and $\Lambda_4$ have index $4$, the coverings \eqref{kappa0exists**} and \eqref{kappa0doesnot**} are nonexistent, since it is impossible to extract a minimal covering appearing in Lemma \ref{length3} or \ref{length4}.


\item[(ii)] if $2 \mid g_{2, 4}$, then the coefficient of $x_2$ in the equation \eqref{1519} is always even. We continue to denote by $\sigma_0$ the potential $\sigma$ such that $2 \mid p_{1, \sigma}(\tilde{\bf t})$ and $2 \mid p_{2, \sigma}(\tilde{\bf t})$. We investigate different situations

\begin{itemize}
\item If $\sigma_0$ exists, we will use the fact that the index of $\Lambda_4$ is equal to $D = 4$ and that the index of $\Lambda(\sigma_0)$ is at least $2$. For $\sigma \in \mathfrak S^\flat - \{\sigma_0\}$, we appeal to Lemma \ref{lZxnx}, formula \eqref{2795}, alinea $(iii)$ to conclude that the pair of coefficients $ (p_{1, \sigma}(\tilde{\bf t}), p_{2, \sigma}(\tilde{\bf t}))$ modulo $4$ has its values in the set
$$
(p_{1, \sigma}(\tilde{\bf t}), p_{2, \sigma}(\tilde{\bf t})) \in \{(1, 0), (1, 1), (1, 2), (1, 3), (3, 0), (3, 1), (3, 2), (3, 3)\},
$$
where we notice that the first component of each pair is odd. Returning to \eqref{1519}, we see that every $\tilde{\Lambda}^{(4)}(\sigma)$ ($ \sigma \in \mathfrak S^\flat - \{\sigma_0\}$) is included in the lattice $\Lambda_2$, since $g_{2, 4}$ is even. Hence the covering \eqref{kappa0exists**} leads to a contradiction, since it reduces to
\begin{equation}
\label{10000}
\Z^2 = \Lambda_2 \cup \Lambda^?,
\end{equation}
which is not compatible with Lemma \ref{length2}. 

\item If $\sigma_0$ does not exist, we use Lemma \ref{lZxnx}, formula \eqref{2795}, alinea $(ii)$. We suppose that $(0, 1)$ (and equivalently $(0, 3)$) does not belong to the set 
$$
\left\{\left(\alpha p_{1, \sigma }(\tilde{\mathbf t}), \alpha p_{2, \sigma}(\tilde{\mathbf t})\right) : \alpha \in (\Z /4\Z)^\ast, \sigma \in S_{\tilde{\mathbf t}, {\rm top}}(4)\right\}.
$$
We will treat the case where $(2, 1)$ replaces $(0, 1)$ below. We know that the pair of coefficients $(p_{1, \sigma}(\tilde{\bf t}), p_{2, \sigma}(\tilde{\bf t}))$ ($\sigma \in \mathfrak S^\flat$) modulo $4$ has its values in the set 
$$
(p_{1, \sigma}(\tilde{\bf t}), p_{2, \sigma}(\tilde{\bf t})) \in \{(1, 0), (1, 1), (1, 2), (1, 3), (2, 1), (2, 3), (3, 0), (3, 1), (3, 2), (3, 3)\}.
$$
We return to the equation \eqref{1519} to deduce, by straightforward calculations, that in the present situation we have (recall that $g_{2, 4} $ is even) the inclusions 
$$
\tilde{\Lambda}^{(4)}(\sigma) \subseteq
\begin{cases}
\Lambda_2 &\text{ if } 2 \nmid p_{1, \sigma}(\tilde{\bf t}) \\
\{(x_1, x_2) : x_1 + (g_{2, 4}/2) x_2 \equiv 0 \bmod 2\} & \text{ if } 2\mid p_{1, \sigma}(\tilde{\bf t}).
\end{cases}
$$
where the lattices, on the RHS, are independent from $\sigma$. We deduce that the covering \eqref{kappa0doesnot**} is simplified to
\begin{equation}
\label{20000}
\Z^2 = \Lambda_2 \cup \Lambda^?.
\end{equation}
This is impossible due to Lemma \ref{length2}. 

It remains to analyze the case
$$
(2, 1) \not \in \left\{\left(\alpha p_{1, \sigma }(\tilde{\mathbf t}), \alpha p_{2, \sigma}(\tilde{\mathbf t})\right) : \alpha \in (\Z /4\Z)^\ast, \sigma \in S_{\tilde{\mathbf t}, {\rm top}}(4)\right\}.
$$
In this case, $(p_{1, \sigma}(\tilde{\bf t}), p_{2, \sigma}(\tilde{\bf t}))$ modulo $4$ has its values in the set 
$$
(p_{1, \sigma}(\tilde{\bf t}), p_{2, \sigma}(\tilde{\bf t})) \in \{(1, 0), (1, 1), (1, 2), (1, 3), (0, 1), (0, 3), (3, 0), (3, 1), (3, 2), (3, 3)\}.
$$
If $4 \nmid g_{2, 4}$, then we consider the $4$--enveloping lattices from equation \eqref{1519}. By looking at the above set of values for $(p_{1, \sigma}(\tilde{\bf t}), p_{2, \sigma}(\tilde{\bf t}))$, we obtain the inclusions
$$
\tilde{\Lambda}^{(4)}(\sigma) \subseteq
\begin{cases}
\Lambda_2 &\text{ if } 2 \nmid p_{1, \sigma}(\tilde{\bf t}) \\
\{(x_1, x_2) : x_2 \equiv 0 \bmod 2\} & \text{ if } 2\mid p_{1, \sigma}(\tilde{\bf t})
\end{cases}
$$
for $\sigma \in \mathfrak{S}^\flat$, and we finish the argument as before.

So suppose that $4 \mid g_{2, 4}$, hence $4 \mid d_2$. We now distinguish two further cases.
\begin{itemize}
\item If $(D, \nu) = (4, 16)$, all lattices have index at least $4$ (since $\Gamma({\rm id})$ has index $4$). This situation is readily handled by looking at the top equations of the system \eqref{generalnotation} and finding a $\sigma$ with $p_{1, \sigma}(\tilde{\bf t}) \not \equiv 0 \bmod 2$, see Lemma \ref{lTripleVanishing}.
\item If $(D, \nu) = (4, 8)$, we switch to the bottom equations of \eqref{generalnotation}. After dividing by $\nu$, we obtain a congruence modulo $d_2D/\nu$, which is divisible by $2$. An explicit study of the polynomial $P_{1, \sigma}$ shows that it vanishes modulo $2$ at $\tilde{\bf t}$ for at most one $\sigma$.  For all the other $\sigma$, we have $\Lambda(\sigma) \subseteq \Lambda_2$ thus leading to an impossible covering.
\end{itemize}
\end{itemize}
\end{itemize}

\noindent The proof of Step \ref{Dneq4} is complete.

\subsection{\texorpdfstring{Further restrictions on $d_2 D$, part 2}{Further restrictions on d2 D, part 2}}
We continue to restrict the possible values of the triples $(d_2, D, \nu)$ by eliminating the even values of $d_2$ in the set \eqref{1752}.

\begin{step}
\label{d2odd} 
With the above hypotheses and notations, we have the following restrictions for the triples $(d_2, D, \nu)$:
$$
\begin{cases}
(D, \nu) \in \{(2, 4), \, (2, 2)\}, \\ 
p \mid d_2 \Rightarrow p = 3.
\end{cases}
$$
\end{step}

For the sake of contradiction, we suppose $(D, \nu) \in \{(2, 4), (2, 2)\}$, but $v_2(d_2) \geq 1$. The proof mimics the proof of Step \ref{Dneq4} but is more delicate, since the lattice $\Lambda({\rm id})$ is now $\Lambda_2$ instead of the smaller lattice $\Lambda_4$ as in Step \ref{Dneq4}. We write the covering \eqref{maincovering} as
$$
\Z^2 = (\Lambda_2 \setminus \Lambda_4) \cup \mathcal{L},
$$
with
$$
\mathcal L = \Lambda_4 \cup \bigcup_{\sigma \in \mathfrak S^\flat} \Lambda(\sigma).
$$
The set $\mathcal L$ is well suited to apply Lemma \ref{lZxnx} with $x = 4$. We explain how to modify the proof of Step \ref{Dneq4} to obtain Step \ref{d2odd}.

\subsubsection{\texorpdfstring{Case where $v_2(g_{2, 4}^2) < v_2(d_2 D)$}{Case where g24 has small valuation}}  
This inequality combined with the hypothesis $D = 2$ leads to
$$
4 \mid d_2D/g_{2, 4},
$$ 
which allows us to consider the $4$--enveloping lattices $\tilde{\Lambda}^{(4)}(\sigma)$ for $\sigma\in \mathfrak S^\flat$ (see \eqref{954}).

\begin{itemize}
\item if $2 \nmid g_{2, 4}$, the proof is the same as in \S \ref{part3}\,(ii) with the help of \eqref{2795}, alinea {\it (i)}.

\item if $2 \mid g_{2, 4}$, we follow the proof of \S \ref{part3}\,(iii). We now have $\Lambda({\rm id}) = \Lambda_2$ instead of $\Lambda_4 \subset \Lambda_2$. This alteration does not modify the final equalities \eqref{10000} ($\sigma_0$ exists) or \eqref{20000} ($\sigma_0$ does not exist). However, we need to modify the last paragraph of \S \ref{part3}\,(iii), when we handle the case $4 \mid g_{2, 4}$. In our current situation, we switch to the bottom equations (as in the case $(D, \nu) = (4, 8)$ before). An explicit study of the polynomial $P_{1, \sigma}$ shows that it vanishes modulo $2$ at $\tilde{\bf t}$ for at most one $\sigma$, and we are done.
\end{itemize}

\noindent So the hypothesis $2 \mid d_2$ is incorrect when $v_2(g_{2, 4}^2) < v_2(d_2 D)$ and $(D, \nu) \in \{(2, 2), (2, 4)\}$.

\subsubsection{\texorpdfstring{Case where $v_2(g_{2, 4}^2) \geq v_2(d_2 D)$}{Case where g24 has large valuation}}
The initial assumption $v_2(d_2D) \geq 2$ implies the inequality $v_2(g_{2, 4}) \geq 1$. Furthermore, since $g_{2, 4} \mid d_2$, we always have $v_2(g_{2, 4}) < v_2(d_2 D)$. So we have the conditions
$$
v_2(g_{2, 4}^2) \geq v_2(d_2 D) \text{ and } 1 \leq v_2(g_{2, 4}) < v_2(d_2 D). 
$$ 
Recalling the notations \eqref{defgij} and \eqref{defttilde} we prove the following  

\begin{lemma}
\label{generalsituation*} 
Let $D \geq 1$ be an integer. For $\sigma \in \mathfrak S^\flat$, suppose that the lattice $\Lambda(\sigma)$ is proper. Let ${\bf t} =(t_1, t_2, t_3, t_4) \in \Z^4$ be a 4--tuple of coprime integers such that $d_2 = \vert t_1t_4 -t_2t_3 \vert$. We assume that $v_2(g_{2, 4}^2) \geq v_2(d_2 D)$ and $1 \leq v_2(g_{2, 4}) < v_2(d_2 D)$. Then, with at most one exception that we will denote by $\sigma_\dag$, one has for every $\sigma$ in the set $\mathfrak S^\flat$ 
\begin{enumerate}
\item either $\Lambda(\sigma) \subseteq \Lambda_2$,
\item or $[\Z^2 : \Lambda(\sigma) ]$ is odd (hence $\geq 3$).
\end{enumerate}
In particular, the first case happens when $p_{1, \sigma}(\tilde{\bf t}) \not\equiv 0 \bmod 2$. 
\end{lemma}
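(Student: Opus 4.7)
The plan is to base everything on the first equation of the system \eqref{generalnotation} defining $\Lambda(\sigma)$. Using the factorisations $p_{1,\sigma}(\mathbf{t}) = g_{2,4}\, p_{1,\sigma}(\tilde{\mathbf{t}})$ and $p_{2,\sigma}(\mathbf{t}) = g_{2,4}^2\, p_{2,\sigma}(\tilde{\mathbf{t}})$, this equation becomes
\[
p_{1,\sigma}(\tilde{\mathbf{t}})\, x_1 + g_{2,4}\, p_{2,\sigma}(\tilde{\mathbf{t}})\, x_2 \equiv 0 \bmod d_2 D/g_{2,4}.
\]
Setting $e := v_2(d_2 D/g_{2,4}) = v_2(d_2 D) - v_2(g_{2,4})$, the hypothesis $v_2(g_{2,4}) < v_2(d_2 D)$ gives $e \geq 1$, while $v_2(g_{2,4}^2) \geq v_2(d_2 D)$ rearranges to $v_2(g_{2,4}) \geq e$. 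The coefficient of $x_2$ therefore vanishes modulo $2^e$, and the equation projects to $p_{1,\sigma}(\tilde{\mathbf{t}})\, x_1 \equiv 0 \bmod 2^e$. When $p_{1,\sigma}(\tilde{\mathbf{t}})$ is odd, this forces $x_1 \equiv 0 \bmod 2^e$, in particular $x_1 \equiv 0 \bmod 2$, so $\Lambda(\sigma) \subseteq \Lambda_2$, giving alternative (1) and proving the ``in particular'' statement.

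To bound the remaining $\sigma$ (those with $p_{1,\sigma}(\tilde{\mathbf{t}})$ even), I would apply Lemma \ref{lTripleVanishing} with $p = 2$ to $\tilde{\mathbf{t}}$. Its hypotheses are satisfied: $\gcd(\tilde{t}_2, \tilde{t}_4) = 1$ by the construction \eqref{defttilde}, and $\gcd(t_1, t_3, 2) = 1$ since $2 \mid g_{2,4}$ combined with $\gcd(g_{1,3}, g_{2,4}) = 1$ in \eqref{propgij} forces $g_{1,3}$ to be odd. Because $2 \not\equiv 1 \bmod 3$, the bound of Lemma \ref{lTripleVanishing} is strict and yields
\[
|\{\sigma \in \mathfrak{S}^\flat : p_{1,\sigma}(\tilde{\mathbf{t}}) \equiv 0 \bmod 2\}| \leq 2.
\]
If this set is empty or a singleton, we take $\sigma_\dag$ to be its unique element (or have no exception) and the lemma is proved in these cases.

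The remaining case, and the main obstacle, is when two elements $\sigma_1, \sigma_2 \in \mathfrak{S}^\flat$ both have $p_{1,\sigma_i}(\tilde{\mathbf{t}})$ even. Here I must show that at most one of them fails both (1) and (2). I would pass to the 2-adic Smith normal form of the defining matrix $N_\sigma$ whose entries are $p_{1,\sigma}(\mathbf{t}), p_{2,\sigma}(\mathbf{t}), \nu P_{1,\sigma}(\mathbf{t}), \nu P_{2,\sigma}(\mathbf{t})$; its invariants $\alpha_1 \leq \alpha_2$ satisfy $\alpha_1 = \min_{i,j} v_2((N_\sigma)_{ij})$, $\alpha_1 + \alpha_2 = v_2(\det N_\sigma) = v_2(\nu) + 2 v_2(d_2)$, and control the 2-part of the index by $v_2([\mathbb{Z}^2 : \Lambda(\sigma)]) = 2 v_2(d_2 D) - \min(\alpha_1, v_2(d_2 D)) - \min(\alpha_2, v_2(d_2 D))$. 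Exploiting that $p_{1,\sigma}$ and $P_{2,\sigma}$ are divisible by $g_{2,4}$, that $p_{2,\sigma}$ is divisible by $g_{2,4}^2$, and that the parity of $P_{1,\sigma}(t_1, t_3)$ is determined by $(t_1, t_3) \bmod 2$, I would enumerate the parity patterns of $(t_1, t_3, \tilde{t}_2, \tilde{t}_4) \bmod 2$; exactly six of them produce two bad $\sigma$'s, and in each such pattern I verify that at most one of the two candidates can satisfy $v_2([\mathbb{Z}^2 : \Lambda(\sigma)]) \geq 1$. The hard part is tracking these Smith invariants uniformly over the remaining valuations $v_2(g_{2,4})$ and $v_2(\tilde{d}_2)$ that are compatible with the two hypotheses.
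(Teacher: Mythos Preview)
Your first step (reducing the top equation modulo $2^e$ and concluding $\Lambda(\sigma)\subseteq\Lambda_2$ when $p_{1,\sigma}(\tilde{\bf t})$ is odd) matches the paper. After that you diverge, and the third step has a genuine problem.

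The paper does not invoke Lemma~\ref{lTripleVanishing} here at all. Instead it writes $d_2D = T\cdot T_1$ with $T=2^{v_2(d_2D)}$ and $T_1$ odd, and splits the full system~\eqref{generalnotation} into four congruences via the Chinese Remainder Theorem. When the top equation gives no constraint modulo $T$ (that is, when $v_2(g_{2,4}\,p_{1,\sigma}(\tilde{\bf t}))\geq v_2(T)$), the paper turns to the \emph{bottom} equation modulo $T$. Two global sub-cases arise, depending only on $\nu$ and not on $\sigma$: if $v_2(\nu)\geq v_2(T)$, both equations vanish modulo $T$ and only the odd-modulus congruences remain, so the index of $\Lambda(\sigma)$ is odd (alternative~2). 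If $v_2(\nu)<v_2(T)$, the bottom equation yields $P_{1,\sigma}(\tilde{\bf t})\,x_1\equiv 0\bmod 2$ (since $g_{2,4}$ is even). A one-line check on the five polynomials $P_{1,\sigma}(t_1,t_3)$ modulo $2$ shows that exactly one $\sigma\in\mathfrak S^\flat$ has $P_{1,\sigma}$ even when $\gcd(t_1,t_3,2)=1$; that single $\sigma$ is the exception $\sigma_\dag$, and every other $\sigma$ gives $\Lambda(\sigma)\subseteq\Lambda_2$.

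Your Smith-normal-form plan, by contrast, aims to prove that among the two candidate $\sigma$'s ``at most one can satisfy $v_2([\Z^2:\Lambda(\sigma)])\geq 1$''. That target is wrong: in the regime $v_2(\nu)<v_2(T)$ the non-exceptional candidate satisfies $\Lambda(\sigma)\subseteq\Lambda_2$, which \emph{forces} an even index, so both candidates can perfectly well have $v_2$ of the index $\geq 1$. What you actually need is that at most one fails \emph{both} alternatives, and the Smith invariants $\alpha_1,\alpha_2$ alone control the index but do not detect the inclusion $\Lambda(\sigma)\subseteq\Lambda_2$ --- that also depends on the unimodular change-of-basis matrix $V$ in the decomposition $N_\sigma = U\,\mathrm{diag}(s_1,s_2)\,V$. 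Your enumeration, as described, would therefore not conclude. Going straight to the bottom equation and using the parity of $P_{1,\sigma}$ eliminates the need for any case-by-case Smith analysis and for the uniformity over $v_2(g_{2,4})$ and $v_2(\nu)$ that you flag as the hard part.
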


\begin{proof} 
Let $T := \gcd(d_2D, 2^\infty)$ and let $T_1$ be the odd integer defined by $T_1 := d_2D/T$. By the Chinese Remainder Theorem, the two equations of \eqref{generalnotation} are equivalent to the system of four equations modulo $T$ and $T_1$, that, with obvious notations, we write as
\begin{equation}
\label{System*}
\Lambda(\sigma):
\begin{cases} 
g_{2, 4} p_{1, \sigma}(\tilde{\bf t}) x_1& \equiv 0 \bmod T\\
g_{2, 4} p_{1, \sigma}(\tilde{\bf t}) x_1 + g_{2, 4}^2 p_{2, \sigma}(\tilde{\bf t}) x_2 & \equiv 0 \bmod T_1\\
\nu (P_{1, \sigma}(\tilde{\bf t}) x_1 + g_{2, 4} P_{2, \sigma}(\tilde{\bf t}) x_2 ) & \equiv 0 \bmod T\\
\nu (P_{1, \sigma}(\tilde{\bf t}) x_1 + g_{2, 4} P_{2, \sigma}(\tilde{\bf t}) x_2 )& \equiv 0 \bmod T_1,
\end{cases}
\end{equation}
where we used the inequality $v_2(g_{2, 4}^2) \geq v_2(d_2 D)=v_2(T)$ to shorten the first equation. Our discussion splits in the following cases:

-- If $v_2(g_{2, 4} p_{1, \sigma}(\tilde{\bf t})) < v_2(T)$, the first equation shows that $\Lambda(\sigma) \subseteq \Lambda_2$. In particular, since we assumed that $v_2(g_{2, 4}) < v_2(d_2D)$, this case certainly happens when $2 \nmid p_{1, \sigma}(\tilde{\bf t})$.

-- If $v_2(g_{2, 4} p_{1, \sigma}(\tilde{\bf t})) \geq v_2(T)$, the first equation gives no more constraints. Suppose now that $v_2(\nu) < v_2(T)$. Then the third equation of the  system \eqref{System*} leads to the equation
$$
P_{1, \sigma}(\tilde{\bf t}) x_1 + g_{2, 4} P_{2, \sigma}(\tilde{\bf t}) x_2 \equiv 0 \bmod 2,
$$
which reduces to
$$
P_{1, \sigma}(\tilde{\bf t}) x_1 \equiv 0 \bmod 2.
$$
But the hypothesis $v_2(g_{2, 4}) \geq 1$ implies that among the integers $t_1$ and $t_3$ at least one is odd. By a direct study of the explicit values of $P_{1, \sigma}$ we have already met the equality
$$
\left\vert \left\{\sigma \in \mathfrak S^\flat : P_{1, \sigma}(\tilde{\bf t})\equiv 0 \bmod 2\right\}\right\vert = 1.
$$
Denote by $\sigma_\dag$ this element. So for $\sigma \neq \sigma_\dag$, one has again $\Lambda(\sigma) \subseteq \Lambda_2$.

-- If $v_2(g_{2, 4} p_{1, \sigma}(\tilde{\bf t})) \geq v_2(T)$ and $v_2(\nu) \geq v_2(T)$, the first and the third equations of the system \eqref{System*} disappear. We are left with the second and fourth equations, which imply that the index of $\Lambda(\sigma)$ is necessarily odd. This corresponds to case {\it 2.} of Lemma \ref{generalsituation*}. 
\end{proof}

We now prove that the covering \eqref{maincovering} leads to a contradiction by appealing to Lemma \ref{generalsituation*}. By Lemma \ref{lTripleVanishing}, we know that there exists $\mathfrak S^\flat_{\leq 2}$ such that 
$$
p_{1, \sigma}(\tilde{\bf t}) \not \equiv 0 \bmod 2 \quad \text{ for } \sigma \not \in \mathfrak S^{\flat}_{\leq 2}.
$$
For $\sigma \not\in \mathfrak S^\flat_{\leq 2}$, one has $\Lambda(\sigma) \subseteq \Lambda_2$. We will only treat the case where $\mathfrak S^{\flat}_{\leq 2}$ contains exactly  two elements  that we call $\sigma_1$ and $\sigma_2$ since it is the worst case. If it exists, $\sigma_\dag$ belongs to the set $\{ \sigma_1,\, \sigma_2\}$.

Since $D = 2$, we have $\Lambda({\rm id}) = \Lambda_2$. Therefore Lemma \ref{generalsituation*} transforms the main covering \eqref{maincovering} into 
\begin{equation}
\label{1859}
\Z^2 =\Lambda_2 \cup \Lambda(\sigma_1) \cup \Lambda(\sigma_2). 
\end{equation}
At most for one $\sigma_i$, we merely know that $\Lambda(\sigma_i)$ is proper, this corresponds to $\sigma_i =\sigma_\dag$ and $\Lambda(\sigma_\dag) =\Lambda^?$. For the remaining $\Lambda(\sigma_j)$ ($j\neq i$), we know that it is either contained in $\Lambda_2\ $ or it has an odd index $\geq 3$. In any of these cases, the equality \eqref{1859} is absurd since we find no corresponding minimal covering in Lemma \ref{length3}.
 
We considered all the possible cases related to the hypothesis $2 \mid d_2$. They all lead to a dead end. The proof of Step \ref{d2odd} is complete.

\subsection{\texorpdfstring{Further restrictions on $d_2 D$, part 3}{Further restrictions on d2 D, part 3}} 
We continue to restrict the possible values of the triples $(d_2, D, \nu)$ by eliminating the triples such that $9 \mid d_2 $. In particular, this implies that the set of such triples is finite. We will prove 

\begin{step}
\label{9doesnotdividebis} 
With the above hypotheses and notations, we have the following restrictions for the triples $(d_2, D, \nu)$:
$$
\begin{cases}
(D, \nu) \in \{(2, 2),\, (2, 4)\}, \\
d_2 \in \{1, 3\}.
\end{cases}
$$
\end{step}

The proof mimics the proof of some steps above with necessary modifications. So we have $D = 2$ and for the sake of contradiction, we suppose that $9 \mid d_2$. We recall the notations and properties \eqref{defgij}, \eqref{propgij} and \eqref{defttilde}. We also recall the $\ell$--enveloping lattice $\tilde{\Lambda}^{(\ell)}(\sigma)$ defined in \eqref{954}, when $\ell \mid d_2D/g_{2, 4}$ and $\sigma \in \mathfrak S^\flat$. In this subsection, we will work with $\ell = 3$ or $\ell = 9$. We will distinguish cases according to the value of $v_3(g_{2, 4})$.

\subsubsection{\texorpdfstring{Case where $v_3(g_{2, 4}) = 0$}{Case where g24 has zero 3-adic valuation}} 
We apply Lemma \ref{lZxnx} ($x = 3$) and split our discussion according to the value of $Z_{\tilde{\mathbf{t}}, \text{top}}(3) \in \{0, 1, 5\}$. 
\begin{itemize}
\item If $Z_{\tilde{\mathbf{t}}, \text{top}}(3) = 0$. We have the equalities of coverings
\[
\Z^2 = \bigcup_{\sigma \in \mathfrak S} \Lambda(\sigma) = \Lambda_2 \cup \bigcup_{\sigma \in \mathfrak{S}_{\leq 3}^\flat } \tilde{\Lambda}^{(3)}(\sigma)
\]
for some $\mathfrak{S}_{\leq 3}^\flat$. This covering is inadmissible, since the last three lattices have their index equal to $3$ and the first lattice has index divisible by $2$. This corresponds to no minimal covering in Lemmas \ref{length3} and \ref{length4}.

\item If $Z_{\tilde{\mathbf{t}}, \text{top}}(3) = 1$. The main covering \eqref{maincovering} is written as 
\begin{equation}
\label{2091}
\Z^2 = \bigcup_{\sigma \in \mathfrak S} \Lambda(\sigma) = \Lambda_2\cup \Lambda^? \cup \bigcup_{\sigma \in \mathfrak{S}_{\leq 2}^\flat } \tilde{\Lambda}^{(3)}(\sigma)
\end{equation}
for some $\mathfrak{S}_{\leq 2}^\flat$, where all $\tilde{\Lambda}^{(3)}(\sigma)$ have index equal to $3$. The covering \eqref{2091} is impossible: by Lemmas \ref{length3} and \ref{length4} it has no associated minimal covering. 

\item If $Z_{\tilde{\mathbf{t}}, \text{top}}(3) = 5$, then we have $p_{1, \sigma}(\tilde{\bf t}) \equiv p_{2, \sigma}(\tilde{\bf t}) \equiv 0 \bmod 3$ for every $\sigma \in \mathfrak S^\flat$. Since $9 \mid d_2D/g_{2, 4}$ in this case, we have the inclusion $\Lambda(\sigma) \subseteq \tilde{\Lambda}^{(9)}(\sigma)$ and, after division by $3$, we write the equation defining $\tilde{\Lambda}^{(9)}(\sigma)$ ($\sigma \in \mathfrak S^\flat$) as
$$
\tilde{\Lambda}^{(9)}(\sigma): \widetilde{p_{1, \sigma}}(\tilde{\bf t}) x_1 + g_{2, 4} \widetilde{p_{2, \sigma}}(\tilde{\bf t}) x_2 \equiv 0 \bmod 3
$$
with the notations of Lemma \ref{messfor9}. Now the proof is similar to the proof given in the two items above by replacing the pair of integers $(Z_{\tilde{\mathbf{t}}, \text{top}}(3), n_{\tilde{\mathbf{t}}, \text{top}}(3))$ by $(\widetilde{Z_{\tilde{\mathbf{t}}, \text{top}}}(9), \widetilde{n_{\tilde{\mathbf{t}}, \text{top}}}(9))$, and the first part of \eqref{3335} by Lemma \ref{messfor9}. Here also we show that this case is impossible.
\end{itemize}

\noindent We investigated all the cases related to $v_3(g_{2, 4}) = 0$. They all lead to impossibilities.

\subsubsection{\texorpdfstring{Case where $v_3(g_{2, 4}) \geq 2$}{Case where g24 has large 3-adic valuation}} 
\label{2127bis} 
These conditions imply
$$
9\mid d_2 D, \quad (t_1, t_3) \not \equiv (0, 0) \bmod 3, \quad D = 2 \quad \text{ and } \quad 3 \nmid \nu,
$$
the two last ones being a consequence of Step \ref{d2odd}. We now work with the bottom equations of \eqref{generalnotation}. By an explicit study of this bottom equation, we see that, after factoring $g_{2, 4}$ out of $P_{2, \sigma}$, one has the inclusion $\Lambda(\sigma) \subseteq \tilde M^{(9)}(\sigma)$, where $\tilde M^{(9)}(\sigma)$ is the lattice defined by 
\begin{equation}
\label{20999}
\tilde M^{(9)}(\sigma) : P_{1, \sigma}(\tilde{\bf t}) x_1 \equiv 0 \bmod 9 \quad \text{ for } \sigma \in \mathfrak S^\flat.
\end{equation}
Actually, $P_{1, \sigma}(\tilde{\bf t})$ is a polynomial in the variables $t_1$ and $t_3$ only. Since $(t_1, t_3) \not\equiv (0, 0) \bmod 3$, we apply Lemma \ref{3467*} in the form of the inequality \eqref{e3Pbound*} to state that there is at most one $\sigma \in \mathfrak S^\flat$ such that $P_{1, \sigma}(\tilde{\bf t}) \equiv 0 \bmod 9$. Let $\sigma_0$ denote this potential element. Then for every $\sigma \in \mathfrak S^\flat$, $\sigma \neq \sigma_0$, we have the inclusion $\Lambda(\sigma) \subseteq \Lambda_3$. Then the main covering \eqref{maincovering} reads 
$$
\Z^2 = \Lambda_2 \cup \Lambda_3 \cup \Lambda^?.
$$
Such a covering is ruled out by Lemma \ref{length3}.

\subsubsection{\texorpdfstring{Case where $v_3(g_{2, 4})=1$}{Case where g24 has large 3-adic valuation}} 
The conditions now are 
$$ 
3 \Vert g_{2, 4}, \quad 9 \mid d_2 D, \quad (t_1, t_3) \not \equiv (0, 0) \bmod 3, \quad D = 2 \quad \text{ and } \quad 3 \nmid \nu,
$$
the two last ones being a consequence of Step \ref{d2odd}. We again work with the bottom equation of \eqref{generalnotation}. For every $\sigma \in \mathfrak S^\flat$, one has the inclusion $\Lambda(\sigma) \subseteq \tilde M^{(3)}(\sigma)$ where $\tilde M^{(3)}(\sigma)$ is the lattice defined by 
$$
\tilde M^{(3)}(\sigma) : P_{1, \sigma}(\tilde{\bf t}) x_1 \equiv 0 \bmod 3.
$$
We investigate several cases
\begin{itemize}
\item if $t_1\not \equiv t_3 \bmod 3$, then we check by hand that there exists at most one $\sigma \in \mathfrak S^\flat$ such that $P_{1, \sigma}(\tilde{\bf t}) \equiv 0 \bmod 3$. For the remaining $\sigma \in \mathfrak S^\flat$ one has $\tilde M^{(3)}(\sigma ) = \Lambda_3$ and the main covering \eqref{maincovering} leads to the equality
$$
\Z^2 = \Lambda_2 \cup \Lambda_3 \cup \Lambda^?,
$$ 
which is absurd due to Lemma \ref{length3}.
 
\item if $t_1 \equiv t_3 (\not \equiv 0) \bmod 3$, we return to the enveloping lattices $\tilde{\Lambda}^{(3)}(\sigma)$ $(\sigma \in \mathfrak S^\flat)$, which are defined by
$$
\tilde{\Lambda}^{(3)}(\sigma) : p_{1, \sigma}(\pm 1, \tilde t_2, \pm 1, \tilde t_4) x_1 \equiv 0 \bmod 3.
$$
We verify that if $\tilde t_2 \not \equiv \tilde t_4 \bmod 3$, then the equation of each $\tilde{\Lambda}^{(3)}(\sigma)$ is equivalent to $x_1 \equiv 0 \bmod 3$ from which we deduce the impossible equality
$$
\Z^2 = \Lambda_2 \cup \Lambda_3.
$$
\item if $t_1\equiv t_3 (\not \equiv 0) \bmod 3$ and if $\tilde t_2 \equiv \tilde t_4 (\not \equiv 0) \bmod 3$, we utilize the inclusion $\Lambda(\sigma) \subseteq \tilde M^{(9)}(\sigma)$. Note that the defining equation of $\tilde M^{(9)}(\sigma)$ is given by
$$
\tilde M^{(9)}(\sigma) : P_{1, \sigma}(\tilde{\bf t}) x_1\equiv 0 \bmod 9 \quad \text{ for } \sigma \in \mathfrak S^\flat,
$$
since $g_{2, 4} P_{2, \sigma}(\tilde{\bf t}) \equiv 0 \bmod 9$ in this case. We recognize the situation already treated in \S \ref{2127bis}, equation \eqref{20999}, leading to an impossible covering.
\end{itemize}

\noindent The proof of Step \ref{9doesnotdividebis} is complete. We rephrase the set described in Step \ref{9doesnotdividebis} in the form
\begin{equation}
\label{4elements}
(d_2, D, \nu) \in \left\{(1, 2, 2),\, (1, 2, 4),\, (3, 2, 2),\, (3, 2, 4)\right\}
\end{equation}
that we continue to shrink.

\subsection{\texorpdfstring{The case $(d_2, D, \nu) = (1, 2, 4)$ is impossible}{The case (d2, D, nu) = (1, 2, 4) is impossible}}
So suppose that we have $(d_2, D, \nu) = (1, 2, 4)$ in order to arrive at a contradiction. We consider the dual covering \eqref{dualcovering}, where equations for $\Gamma (\sigma)$ are given in Lemma \ref{Lemma6.1*}. In particular, we have
\begin{equation}
\label{2236*}
\Gamma ({\rm id}) = \{(x_1, x_2) : x_2 \equiv 0 \bmod 2\}.
\end{equation}
This lattice has index $2$ and by \eqref{gammaminimal} all the $\Gamma (\sigma)$ have an index $\geq 2$. With the above values of $(d_2, D, \nu)$ we have $D\nu = 8$ and $d_2\nu = 4$. So every system of equations defining $\Gamma (\sigma)$ is equivalent to
\begin{equation}
\label{2281*}
\Gamma (\sigma) : 
\begin{cases} p_{2, \sigma}({\bf t}) x_2 \equiv 0 \bmod 2\\
P_{2, \sigma}({\bf t}) x_2 \equiv 0 \bmod 2.
\end{cases}
\end{equation}
Since $\Gamma(\sigma)$ is a proper lattice, at least one of the integers $p_{2, \sigma}({\bf t})$ and $P_{2, \sigma}({\bf t})$ is odd. This leads to the equality 
$$
\Gamma (\sigma) = \{(x_1, x_2) : x_2 \equiv 0 \bmod 2\} \quad \text{ for } \sigma \in \mathfrak S^\flat.
$$
Upon combining with \eqref{2236*}, we deduce that the covering \eqref{dualcovering} is not possible.

The relation \eqref{4elements} is thus strengthened to
$$
(d_2, D, \nu) \in \left\{(1, 2, 2),\, (3, 2, 2),\, (3, 2, 4)\right\}.
$$

\subsection{\texorpdfstring{The case $(d_2, D, \nu) = (3, 2, 4)$ is impossible}{The case (d2, D, nu) = (3, 2, 4) is impossible}} 
This case generalizes what we just did. We now have $d_2 \nu =12$ and the $\Gamma (\sigma)$ ($\sigma \in \mathfrak S^\flat$) are defined by congruences modulo 12. These lattices are proper, since $\Gamma({\rm id})$ is proper, but this does not imply that they remain proper, when one replaces the modulus $12$ by $4$. The equality \eqref{2281*} is no more true, but we still have the inclusion
$$
\Gamma (\sigma) \subseteq \tilde\Gamma^{(2)}(\sigma) : \begin{cases} p_{2, \sigma}({\bf t}) x_2 \equiv 0 \bmod 2\\
P_{2, \sigma}({\bf t}) x_2 \equiv 0 \bmod 2,
\end{cases}
$$
where $p_{2, \sigma}({\bf t})$ and $P_{2, \sigma}({\bf t})$ are defined by \eqref{generalnotation}. We have the inclusion $\Gamma(\sigma) \subseteq \{(x_1, x_2) : x_2 \equiv 0 \bmod 2\}$ as soon as at least one of the integers $p_{2, \sigma}({\bf t})$ and $P_{2, \sigma}({\bf t})$ is odd. By a direct verification, we see that this congruence modulo $2$ is verified for each $\sigma \in \mathfrak S^\flat$ and for each quadruple $\bf t$ such that $t_1t_4 - t_2t_3$ is odd. Since $3 = d_2 = |t_1t_4 - t_2t_3|$ is indeed odd for us, the dual covering \eqref{dualcovering} is the following nonsense 
$$
\Z^2 = \{(x_1, x_2) : x_2 \equiv 0 \bmod 2\},
$$
and the case $(d_2, D, \nu) = (3, 2, 4)$ is impossible. So we know that necessarily 
$$
(d_2, D, \nu) \in \left\{(1, 2, 2),\, (3, 2, 2)\right\}.
$$

\subsection{\texorpdfstring{Study when $(d_2, D, \nu) = (3, 2, 2)$}{Study when (d2, D, nu) = (3, 2, 2)}} 
We have $g_{2, 4} \in \{1, 3\}$ in our current situation.

\subsubsection{\texorpdfstring{When $g_{2, 4} =1$}{When g24 = 1}} 
\label{2278bis} 
We have $3\mid d_2D/g_{2, 4}$, so we can use the $3$--enveloping lattice $\tilde{\Lambda}^{(3)}(\sigma)$ to exploit the inclusion
$$
\Lambda(\sigma) \subseteq \tilde{\Lambda}^{(3)}(\sigma) \quad \text{ for }  \sigma \in \mathfrak S^\flat,
$$
and the equality $\Lambda({\rm id}) = \Lambda_2$. Our discussion is again based on the possible values of the function $Z_{\mathbf t, \mathrm{ top}}(3)$ that we find in Lemma \ref{lZxnx}, equation \eqref{3335}, in order to understand the behavior of the equations \eqref{1283}. 

\vskip .2cm
$\bullet$ {\bf The equality $Z_{\mathbf t, \mathrm{top}}(3) = 0$ is impossible.}
Indeed, the first inequality of \eqref{3335} and the covering \eqref{maincovering} would lead to a covering of $\Z^2$ by four lattices, one with index 2 (corresponding to $\Lambda_2$), and at most three with index equal to $3$ (corresponding to $\tilde{\Lambda}^{(3)}(\sigma)$). Such a covering is impossible as a consequence of Lemmas \ref{length3} and \ref{length4}.

\vskip .2cm
$\bullet$ {\bf The equality $Z_{\mathbf t, \mathrm{top}}(3) = 1$ is impossible.} 
Now we are led to a covering of $\Z^2$ by at most four lattices: one with index 2 (corresponding to $\Lambda_2$), and at most two with index equal to $3$ and finally some $\Lambda^?$. This is also impossible by Lemmas \ref{length3} and \ref{length4}.

\vskip .2cm
$\bullet$ {\bf The situation when $Z_{\mathbf t, \mathrm{top}}(3) = 5$.} 
By \eqref{eBadt}, we know that $\bf t = \tilde{\bf t}$ belongs to the exceptional set $\mathcal T_3$ containing six elements. So, by definition, we have $p_{1, \sigma}({\bf t}) \equiv p_{2, \sigma}({\bf t}) \equiv 0 \bmod 3$ for all $\sigma \in \mathfrak S^\flat$. We check (or, alternatively, we apply Remark \ref{930}) that we also have the equalities $P_{1, \sigma}({\bf t}) \equiv P_{2, \sigma}({\bf t}) \equiv 0 \bmod 3.$ Then by Remark \ref{representativematrix} we deduce that the representative matrix of $\sigma \in \mathfrak S^\flat$ belongs to $\GL$. This gives the inclusion
\begin{equation}
\label{2294bis}
{\rm Aut}(G_2, \Q) \subset \GL,
\end{equation}
which implies \eqref{supper} of Theorem \ref{sourcebis}. Since $(D, \nu) = (2, 2)$ holds in both cases, the matrix $\gamma$ defined in \eqref{defgamma} has the value
\begin{equation}
\label{2001}
\gamma = \begin{pmatrix} 2 & 0 \\ 0 &1
\end{pmatrix},
\end{equation}
which leads to $G_1= G_2^\dag$. This is precisely the second part of \eqref{G1= G2bis}. So we recovered the conclusion of Theorem \ref{sourcebis} in this case.
 
\subsubsection{\texorpdfstring{When $g_{2, 4} = 3$}{When g24 equals 3}} 
Recall that the $t_i$ are coprime. So we now have the condition $g_{1, 3} = 1$. We consider the bottom equations of \eqref{generalnotation}. Since $(d_2, D, \nu) = (3, 2, 2)$, we have the inclusion $\Lambda(\sigma) \subseteq \tilde M^{(3)}(\sigma)$, where 
$$
\tilde M^{(3)}(\sigma) : P_{1, \sigma}(\tilde{\bf t}) x_1 \equiv 0 \bmod 3.
$$
In this case we get the inclusion \eqref{2294bis} precisely when $t_1 \equiv t_3 \bmod 3$, and all other cases lead to an invalid covering. Indeed, we get $\tilde M^{(3)}(\sigma) \subseteq \{(x_1, x_2) : x_1 \equiv 0 \bmod 3\}$ for all but one $\sigma$ if $t_1 \not \equiv t_3 \bmod 3$. The remainder of the proof is identical to \S \ref{2278bis}.

\subsection{\texorpdfstring{Study when $(d_2, D, \nu) = (1, 2, 2)$}{Study when (d2, D, nu) = (1, 2, 2)}} 
This is the easiest case. By Remark \ref{representativematrix} we deduce that \eqref{2294bis} holds and finally the matrix $\gamma$ (see \eqref{defgamma}) here also has the value given in \eqref{2001}, which leads to $G_1= G_2^\dag$. This is the second part of \eqref{G1= G2bis}. So we recovered the conclusion of Theorem \ref{sourcebis} in this case.

The proof of Theorem \ref{sourcebis} is complete.

\appendix
\section{Algorithms}
\label{a1}
Consider the set
\[
\mathcal{S} = \{(L_1, \dots, L_6) : L_1 \cup \dots \cup L_6 = \Z^2, L_i \text{ is a subgroup of } \Z^2 \text{ for all } i\}.
\]
It will be most convenient to consider general subgroups of $\Z^2$, i.e. we allow the rank of $L_i$ to be smaller than $2$. We turn the set of $6$-tuples $(L_1, \dots, L_6)$ in a partial order by defining
\[
(L_1, \dots, L_6) \preccurlyeq (K_1, \dots, K_6)
\]
if there exists a permutation $\sigma \in S_6$ such that
\[
L_i \subseteq K_{\sigma(i)}.
\]
In particular, this gives a partial order on the set $\mathcal{S}$ by restriction. Recall that $\mathcal{S}$ consists of infinitely many elements. A good way to describe them is as follows.

\begin{theorem}
There are finitely many minimal elements in $\mathcal{S}$ under the ordering $\preccurlyeq$.
\end{theorem}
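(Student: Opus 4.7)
The plan is to invoke B.~H.~Neumann's classical covering theorem in two forms: the qualitative version (if an abelian group $G$ is a finite union of cosets of subgroups, then the cosets corresponding to finite-index subgroups alone cover $G$), and the quantitative version (in an irredundant covering by $n$ such cosets, each subgroup has finite index bounded by a function of $n$). With these tools, finiteness of minimal elements will follow from a brief structural analysis together with the fact that $\Z^2$ has only finitely many sublattices of bounded index.

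Let $(L_1, \ldots, L_6) \in \mathcal{S}$ be minimal. If some $L_i = \Z^2$, then for every $j \neq i$ and every proper subgroup $L_j' \subsetneq L_j$, the tuple $(L_1, \ldots, L_j', \ldots, L_6)$ still covers $\Z^2$ because $L_i$ alone does. Minimality therefore forces $L_j = \{0\}$ for all $j \neq i$, producing only the six tuples consisting of a single $\Z^2$ and five copies of $\{0\}$ (one for each position of the $\Z^2$). Otherwise no $L_i$ equals $\Z^2$: viewing each subgroup as the trivial coset of itself, the qualitative Neumann theorem implies that the rank-$2$ components of $(L_1, \ldots, L_6)$ already cover $\Z^2$. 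Consequently every component of rank at most $1$ is redundant and can be shrunk to $\{0\}$, so minimality forces it to equal $\{0\}$. Hence each $L_i$ is either $\{0\}$ or a proper rank-$2$ sublattice of $\Z^2$.

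Next, let $L_{i_1}, \ldots, L_{i_r}$ (with $r \leq 6$) be the rank-$2$ components. By minimality we cannot shrink any $L_{i_j}$ to $\{0\}$, so this sub-tuple is an irredundant covering of $\Z^2$. Applying the quantitative form of Neumann's theorem, one obtains a uniform bound $[\Z^2 : L_{i_j}] \leq N$ depending only on $r \leq 6$. Since the number of sublattices of $\Z^2$ of index at most $N$ is finite, there are only finitely many ways to choose each rank-$2$ component, and therefore only finitely many minimal elements of $\mathcal{S}$.

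The main obstacle is the quantitative Neumann bound, which is classical but requires some care in its precise invocation (one has to argue that the irredundant rank-$2$ subcover retains enough structure to apply Neumann's bound). As an alternative, one can bypass the quantitative Neumann theorem entirely by combining Lemma \ref{sumofdensities} (which directly bounds the minimum index in any rank-$2$ cover) with an inductive analysis of minimal coverings of length at most $6$, in the spirit of Lemmas \ref{length3}--\ref{length6}; this yields the bound by explicit casework and is in fact the natural route toward the later algorithmic enumeration in the appendix.
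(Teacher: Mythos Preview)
Your proof is correct and takes a genuinely different route from the paper. The paper's argument for this theorem is entirely constructive: it runs the recursive routine \texttt{FindLattice} on the input $(\{0\},\dots,\{0\})$ against the explicit point list $P$ of equation~\eqref{eP}, and establishes finiteness by the empirical fact that the algorithm terminates without running out of test points. The output is then pruned to yield Table~\ref{table1}. No abstract index bound is ever invoked; finiteness is a by-product of the enumeration.

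Your approach, by contrast, is a clean existence argument. After correctly reducing to the case where every component is either $\{0\}$ or a proper rank-$2$ sublattice (using the qualitative Neumann lemma together with the permutation argument that two $\preccurlyeq$-equivalent tuples must be permutations of one another), you appeal to a uniform bound on the indices in an irredundant cover by $\leq 6$ cosets. This bound does hold --- one proves by induction on $n$ that in any irredundant cover of an abelian group by $n$ cosets, every index is at most $n!$: the density estimate of Lemma~\ref{sumofdensities} forces some $[\Z^2:L_1]\leq n$, and for each other $L_j$ the irredundancy witness $x_j\in L_j\setminus\bigcup_{i\neq j}L_i$ lies in a nontrivial coset of $L_1$, whose translate back to $L_1$ gives an irredundant cover of $L_1$ by $\leq n-1$ cosets in which the $j$-th component is forced to appear. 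Calling this ``the quantitative form of Neumann's theorem'' is a mild overreach (Neumann's 1954 paper gives the qualitative statement and the bound on the \emph{minimum} index; the bound on the maximum index needs the extra inductive step you sketch at the end), but the mathematics is sound.

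The trade-off is clear: your argument is short and conceptual but yields no list, whereas the paper needs the explicit catalogue of Lemmas~\ref{length3}--\ref{length6} for the case analysis in Sections~5--6. For the bare statement of the theorem your proof is preferable; for the paper's applications, the constructive route is unavoidable.
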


Write $\mathcal{T}$ for this finite set. Then one may reconstruct $\mathcal{S}$ as
\[
\mathcal{S} = \{(L_1, \dots, L_6) : t \preccurlyeq (L_1, \dots, L_6) \text{ for some } t \in \mathcal{T}\}.
\]
The goal of this section is to give a constructive proof of the above theorem. In fact, we will use an algorithm to compute a finite superset of $\mathcal{T}$. We will also show the following result in the process.

\begin{theorem}
There exists a finite set of points $P \subseteq \Z^2$ with the following property. Suppose that $(L_1, \dots, L_6)$ satisfies
\[
P \subseteq L_1 \cup \dots \cup L_6.
\]
Then we have $L_1 \cup \dots \cup L_6 = \Z^2$.
\end{theorem}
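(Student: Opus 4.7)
The plan is to argue by contradiction, exploiting compactness of the space $2^{\Z^2}$ in the product topology together with the finiteness of $\mathcal{T}$ supplied by the preceding theorem. Concretely, suppose no such finite $P$ exists. Enumerate $\Z^2 = \{p_1, p_2, \ldots\}$, set $P_n := \{p_1, \ldots, p_n\}$, and for each $n$ choose a $6$--tuple $(L_1^{(n)}, \ldots, L_6^{(n)})$ of subgroups of $\Z^2$ with $P_n \subseteq L_1^{(n)} \cup \cdots \cup L_6^{(n)}$ and $L_1^{(n)} \cup \cdots \cup L_6^{(n)} \neq \Z^2$.

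The next step is to extract a limit. The collection of subgroups of $\Z^2$ is closed in $2^{\Z^2}$, being defined by pointwise conditions (contain $0$, closed under negation, closed under addition), and is therefore compact. Applying sequential compactness six times, I pass to a subsequence so that for each $i \in \{1, \ldots, 6\}$ the indicator function of $L_i^{(n)}$ converges pointwise on $\Z^2$ to the indicator function of some subgroup $L_i$. I would then verify that $(L_1, \ldots, L_6)$ covers $\Z^2$: every $x \in \Z^2$ lies in $P_n$ for all large $n$, hence in some $L_{i(n)}^{(n)}$; by the pigeonhole principle some index $i$ recurs infinitely often, and by pointwise convergence $x \in L_i$.

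Now I would invoke the first theorem of this appendix: there is a minimal element $(K_1, \ldots, K_6) \in \mathcal{T}$ with $(K_1, \ldots, K_6) \preccurlyeq (L_1, \ldots, L_6)$, so after relabeling we may assume $K_i \subseteq L_i$ for all $i$. Each $K_i$, being a subgroup of $\Z^2$, is finitely generated; let $G_i \subseteq K_i$ be a finite generating set. Then $G_i \subseteq L_i$, so by pointwise convergence $G_i \subseteq L_i^{(n)}$ for all sufficiently large $n$, whence $K_i \subseteq L_i^{(n)}$ since $L_i^{(n)}$ is itself a subgroup. But this forces $L_1^{(n)} \cup \cdots \cup L_6^{(n)} \supseteq K_1 \cup \cdots \cup K_6 = \Z^2$ for all large $n$, contradicting the choice of our $6$--tuples.

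The main obstacle here is conceptual rather than computational: one must set up pointwise convergence of subgroups carefully, confirm that the limit is a subgroup, and exploit finite generation to bridge from the limit back to the tail of the sequence. From this non--constructive argument one can also read off the algorithmic construction hinted at in the appendix: start with $P = \varnothing$, and while there exists a non--covering $6$--tuple whose union contains $P$, adjoin a missed point to $P$; the compactness argument above guarantees that this process terminates, so that an explicit admissible $P$ is produced in finitely many steps.
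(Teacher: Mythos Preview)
Your compactness argument is correct and takes a genuinely different route from the paper. The paper proves the theorem constructively and by computer: it writes down an explicit list $P$ of about a hundred points (equation~(\ref{eP})) and then verifies, by running the recursive algorithm \texttt{FindLattice} from the trivial tuple $(\{0\},\ldots,\{0\})$, that the algorithm terminates without exhausting $P$. The argument that termination forces the claimed property of $P$ is then a short proof by contradiction embedded in the correctness lemma for \texttt{FindLattice}. So the paper's proof is algorithmic and yields a concrete $P$, which is precisely what is needed downstream to compute Table~\ref{table1} and hence Lemmas~\ref{length5} and~\ref{length6}. Your argument, by contrast, is a clean non-constructive existence proof via sequential compactness of the space of subgroups of $\Z^2$ inside $2^{\Z^2}$; it requires no computation but produces no explicit $P$.

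One remark: your detour through Theorem~A.1 and the set $\mathcal{T}$ is unnecessary. Once you have the limiting cover $(L_1,\ldots,L_6)$, each $L_i$ is already a finitely generated subgroup of $\Z^2$, so you may take $K_i=L_i$ directly, choose finite generating sets $G_i\subseteq L_i$, and conclude $L_i\subseteq L_i^{(n)}$ for all large $n$ exactly as you do. This makes your proof of the second theorem independent of the first, which is a small gain in logical hygiene since the paper in fact proves both theorems simultaneously via the same algorithm. Your closing paragraph correctly identifies that the compactness argument guarantees termination of the greedy ``add a missed point'' procedure, which is essentially what the paper's algorithm does along the fixed list~$P$.
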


\noindent The list $P$ will be completely explicit.

\subsection{Finding covers}
The goal of this subsection is to give and explain the algorithm that computed Table \ref{table1}. We will first give an important subroutine. All our algorithms are implemented with SageMath.

\subsubsection{Subroutine for covering}
We will now give an algorithm that accomplishes the following task.

\vskip .3cm

\noindent Routine isCover(L). \\
\noindent Input: a list $(L_i)_{1 \leq i \leq n}$, where each $L_i$ is a subgroup of $\Z^2$. \\
\noindent Output: return True if and only if $\bigcup_{1 \leq i \leq n} L_i = \Z^2$.

\begin{verbatim}
V = FreeModule(ZZ, 2)
e1 = vector((1, 0))
e2 = vector((0, 1))

def isCover(L):
    boolean = True
    for i in range(len(L)):
        if L[i].rank() == 2:
            boolean = False
    
    if boolean:
        return False
                    
    W = V.span([e1, e2])    
    for i in range(len(L)):
        if L[i].rank() == 2:
            W = W.intersection(L[i])
    
    genList = W.gens()
    gen1 = genList[0]
    gen2 = genList[1]
    a = gen1[0]
    b = gen2[1]
    
    for i in range(a):
        for j in range(b):
            boolean = False
            for k in range(len(L)):
                v = vector((i, j))
                if L[k].rank() == 2 and v in L[k]:
                    boolean = True
            if not boolean:
                return False

    return True
\end{verbatim}

\begin{lemma}
The subroutine isCover(L) outputs true if and only if $\bigcup_{1 \leq i \leq n} L_i = \Z^2$.
\end{lemma}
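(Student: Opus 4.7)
The plan is to verify each branch of the subroutine against the set-theoretic statement $\bigcup_i L_i = \Z^2$. First I would dispose of the early exit: if no $L_i$ has rank $2$, then each $L_i$ is contained in a one-dimensional subspace and hence has asymptotic density zero, so a finite union of them has density zero and cannot be $\Z^2$. The \texttt{return False} triggered by the initial loop is therefore correct. Next I would show that the rank-$\leq 1$ sublattices contribute nothing when a rank-$2$ one is present: setting $W := \bigcap_{\rk L_i = 2} L_i$, the complement in $\Z^2$ of $\bigcup_{\rk L_i = 2} L_i$ is a union of $W$-cosets, hence periodic. If nonempty it has positive density and therefore cannot be covered by finitely many rank-$\leq 1$ subgroups, each of density zero. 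So the problem reduces to checking whether the rank-$2$ sublattices alone cover $\Z^2$.

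The core of the proof is then coset enumeration modulo $W$. Since $W$ is a finite intersection of rank-$2$ sublattices it is itself rank-$2$ of finite index, and since $W \subseteq L_k$ for every rank-$2$ index $k$, any $W$-coset lies entirely in $L_k$ or is disjoint from it. Consequently, $\bigcup_{\rk L_k = 2} L_k = \Z^2$ if and only if each $W$-coset contains a point belonging to some $L_k$. The call to \texttt{W.gens()} returns a Hermite normal form basis of $W$, which under the standard convention is triangular, so the values \texttt{a = gen1[0]} and \texttt{b = gen2[1]} are the two HNF pivots and $[\Z^2 : W] = ab$. I would then verify that the $ab$ points $\{(i, j) : 0 \leq i < a,\ 0 \leq j < b\}$ are pairwise inequivalent modulo $W$: writing their difference as an integer combination of the basis vectors, the pivot bounds force both coefficients to vanish. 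Since their count matches the index, they form a complete set of representatives, and the inner double loop checks exactly the required condition.

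The only subtle point I foresee is the reliance on the specific output format of \texttt{W.gens()} in SageMath. To make the argument airtight I would either cite the documented HNF convention used by the \texttt{FreeModule} class, or, more robustly, rephrase the proof in terms of whichever transversal the code implicitly constructs and check directly that the two pivots returned always produce a valid set of coset representatives. Once that convention is pinned down, the remaining verification is entirely routine and the equivalence in the lemma follows by combining the three reductions above.
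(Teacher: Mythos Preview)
Your proposal is correct and follows essentially the same approach as the paper: dispose of the all-rank-$<2$ case, reduce to the rank-$2$ sublattices, intersect them to get $W$, and check a set of coset representatives built from the HNF pivots. You supply a density argument for why rank-$\leq 1$ sublattices can be discarded and an explicit check that the $ab$ points are pairwise inequivalent modulo $W$, whereas the paper simply asserts both facts; otherwise the arguments coincide.
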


\begin{proof}
We have
\[
\bigcup_{1 \leq i \leq n} L_i \neq \Z^2,
\]
if the rank of $L_i$ is smaller than $2$ for all $i$. This is checked in the first seven lines of the algorithm. Write $S$ for the subset of $\{1, \dots, n\}$ such that the rank of $L_i$ equals $2$. Then we observe that
\[
\bigcup_{1 \leq i \leq n} L_i = \Z^2 \Longleftrightarrow \bigcup_{i \in S} L_i = \Z^2.
\]
So it suffices to show that isCover(L) outputs true if and only if $\bigcup_{i \in S} L_i = \Z^2$. Now we check this last condition as follows. Define
\[
W := \bigcap_{i \in S} L_i.
\]
Since all the $L_i$ are of rank $2$, we see that $W$ is of finite index in $\Z^2$. The generators of $W$ are given by SageMath in row echelon form with the first generator being $(a, 0)$ for some $a$, and the second generator being $(\ast, b)$ for some $b$. From this, we see that a fundamental domain for $\Z^2/W$ is
\[
\mathcal{F} = \{(i, j) : 0 \leq i < a, 0 \leq j < b\}.
\]
By this we mean that
\[
\bigcup_{f \in \mathcal{F}} f + W = \Z^2,
\]
and $(f + W) \cap (g + W) = \{0\}$ for all distinct $f, g \in \mathcal{F}$. By construction of $W$, we have that
\[
\bigcup_{i \in S} L_i = \Z^2 \Longleftrightarrow \mathcal{F} \subseteq \bigcup_{i \in S} L_i.
\]
This last condition is checked at the end of the algorithm.
\end{proof}

\subsubsection{Main algorithm}
We will now give the main recursive algorithm. We define the following list
\begin{multline}
\label{eP}
P = [(1, 0), (0, 1), (1, 1), (1, -1), (1, 2), (2, 1), (1, 3), (3, 1), (1, -3), (3, -1), (1, 4), (2, 3), (3, 2), \\
(4, 1), (1, -4), (2, -3), (3, -2), (4, -1), (5, 1), (1, 5), (5, -1), (1, -5), (1, 6), (2, 5), (3, 4), (4, 3), \\
(5, 2), (6, 1), (1, 7), (3, 5), (5, 3), (7, 1), (1, 8), (2, 7), (4, 5), (5, 4), (7, 2), (8, 1), (1, 9), (3, 7), \\
(7, 3), (9, 1), (1, 10), (2, 9), (3, 8), (4, 7), (5, 6), (6, 5), (7, 4), (8, 3), (9, 2), (10, 1), (1, 11), (5, 7), \\
(7, 5), (11, 1), (1, 12), (2, 11), (3, 8), (4, 7), (5, 8), (6, 7), (7, 6), (8, 5), (9, 4), (10, 3), (11, 2), \\
(12, 1), (1, 13), (3, 11), (5, 9), (9, 5), (11, 3), (13, 1), (1, 14), (2, 13), (4, 11), (7, 8), (8, 7), (11, 4), \\
(13, 2), (14, 1), (1, 15), (3, 13), (5, 11), (7, 9), (9, 7), (11, 5), (13, 3), (15, 1), (1, 16), (8, 9), (9, 8), \\
(16, 1), (2, 15), (1, 30), (1, 17), (30, 1), (17, 1)].
\end{multline}
We write $P[j]$ for the $j$-th item of $P$, and $\text{len}(P)$ for the length of $P$.

\vskip .3cm

\noindent Routine FindLattice(L, index).\\
\noindent Input: a list $(L_i)_{1 \leq i \leq n}$, where each $L_i$ is a subgroup of $\Z^2$, and an integer index satisfying $0 \leq \text{index} < \text{len}(P)$. \\
\noindent Assumptions on input: we assume that
\[
P[j] \in \bigcup_{1 \leq i \leq n} L_i
\]
for all $j \geq 0$ smaller than the variable index and that
\[
\bigcup_{1 \leq i \leq n} L_i \neq \Z^2.
\]
\noindent Output: return a set $\mathcal{T}$ of tuples $(K_i)_{1 \leq i \leq n}$ such that $(L_i)_{1 \leq i \leq n} \preccurlyeq (K_i)_{1 \leq i \leq n}$, $\bigcup_{1 \leq i \leq n} K_i = \Z^2$ and such that the following property holds. Let $(L_i')_{1 \leq i \leq n}$ be a tuple satisfying
\[
\bigcup_{1 \leq i \leq n} L_i' = \Z^2, \quad (L_i)_{1 \leq i \leq n} \preccurlyeq (L_i')_{1 \leq i \leq n}, \quad L_i' \neq \Z^2 \text{ for all } i.
\]
Then there exists $(K_i)_{1 \leq i \leq n} \in \mathcal{T}$ such that $(K_i)_{1 \leq i \leq n} \preccurlyeq (L_i')_{1 \leq i \leq n}$.

\begin{verbatim}
e1 = vector((1, 0))
e2 = vector((0, 1))

def FindLattice(L, index):
    Solutions = []
    v = PointList[index]
    
    LatticeIndex = 0
    while L[LatticeIndex].rank() != 0 and LatticeIndex < len(L) - 1:
        LatticeIndex = LatticeIndex + 1
        
    Boolean = True
    while Boolean:
        Boolean = False
        for i in range(len(L)):
            if v in L[i]:
                Boolean = True
                index = index + 1
                v = PointList[index]
                break

    for i in range(LatticeIndex + 1):
        genList = []
        for j in range(len(L[i].gens())):
            genList = genList + [L[i].gen(j)]
        genList = genList + [v]
        Lnew = L[i].span(genList)
        Lold = L[i]

        if e1 not in Lnew or e2 not in Lnew:
            L[i] = Lnew
            if isCover(L):
                Solutions = Solutions + L
            else:
                Solutions = Solutions + FindLattice(L, index + 1)
        L[i] = Lold
    return Solutions
\end{verbatim}

\noindent Here PointList is the list $P$ as specified in equation (\ref{eP}).

\begin{lemma}
Suppose that $n \leq 6$. Then the algorithm FindLattice(L, index) terminates and gives the claimed output.
\end{lemma}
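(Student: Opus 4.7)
The plan is to prove the lemma in three parts: termination, soundness, and completeness.

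For termination, I would observe that the local variable \emph{index} strictly increases along every chain of recursive calls, since the inner \emph{while} loop advances it each time $v$ is already contained in some $L_i$, and the recursive call is invoked with argument $\text{index}+1$. Hence the recursion depth is bounded by $\mathrm{len}(P) - \text{index}$; at each level the \emph{for}-loop branches into at most $\mathrm{LatticeIndex}+1 \leq n \leq 6$ sub-calls, so the total number of invocations is finite. One must also check that the inner \emph{while} loop never reads past the end of PointList: this would require $(L_i)$ to cover every point of $P$ without covering $\Z^2$, which is ruled out by the sufficiency property of $P$ stated in the second theorem above (to be verified by direct computation in the next subsection).

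Soundness follows by reverse induction on \emph{index}: every tuple in Solutions is either recorded directly after a successful \emph{isCover} check (where the cover property is immediate) or inherited from a recursive call (where the induction hypothesis applies). In both cases the output extends the input componentwise, because the only modification replaces some $L_i$ by $L_i^{\text{new}} = \mathrm{span}(L_i \cup \{v\}) \supseteq L_i$, and the guard testing whether $e_1$ and $e_2$ both lie in $L_i^{\text{new}}$ ensures no component of any output tuple equals $\Z^2$.

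For completeness, which is the main technical content, the plan is again reverse induction: given a cover $(L_i')$ with $(L_i) \preccurlyeq (L_i')$ and $L_i' \neq \Z^2$ for all $i$, I want to produce some $(K_i) \in \mathcal{T}$ with $(K_i) \preccurlyeq (L_i')$. Let $v = P[\text{index}]$ after the \emph{while} loop has skipped already-covered points. Because $(L_i')$ covers $\Z^2$, some $L_{i_0}'$ contains $v$. Fix a permutation $\sigma \in S_n$ witnessing $L_i \subseteq L_{\sigma(i)}'$ and let $i_1 = \sigma^{-1}(i_0)$. If $i_1 > \mathrm{LatticeIndex}$, then $L_{i_1}$ has rank $0$ by the maintained invariant that positive-rank lattices precede zero-rank ones, so I can redefine $\sigma$ on the pair $\{i_1, \mathrm{LatticeIndex}\}$ to arrange $\sigma(\mathrm{LatticeIndex}) = i_0$ while preserving all inclusions $L_i \subseteq L_{\sigma(i)}'$ (both swapped positions hold the zero subgroup). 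In the branch $i = \mathrm{LatticeIndex}$ of the \emph{for}-loop, the algorithm replaces $L_i$ by $\mathrm{span}(L_i \cup \{v\}) \subseteq L_{i_0}' \neq \Z^2$, so the recursive call runs on a valid input and the inductive hypothesis supplies the desired $(K_i)$.

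The main obstacle is precisely this relabeling step: the symmetry-breaking restriction $i \leq \mathrm{LatticeIndex}$ must not discard any covers, which rests on the invariant that the zero-rank components of the current tuple are contiguous at the end and therefore indistinguishable under $\preccurlyeq$. The invariant holds at the initial call (where all $L_i$ start as the zero subgroup) and is preserved by every recursive step, since each call augments only one lattice $L_i$ with $i \leq \mathrm{LatticeIndex}$, promoting the first zero-rank slot to a positive-rank slot without disturbing the ordering of the remaining zero-rank positions.
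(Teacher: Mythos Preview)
Your proposal is correct and follows the same overall strategy as the paper: termination because the variable \emph{index} strictly increases and cannot run past the end of $P$ (appealing to the sufficiency of $P$, i.e.\ the second theorem, which is established computationally), and completeness by tracking a fixed target cover $(L_i')$ through the recursion tree. You supply substantially more detail than the paper on the completeness step---in particular the invariant that rank-zero slots form a terminal segment and the permutation swap needed to respect the $\mathrm{LatticeIndex}$ cutoff---whereas the paper compresses this entire argument into the single phrase ``we simply enforce that $(K_i)\preccurlyeq(L_i')$ at each recursive step.''
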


\begin{proof}
Let us first remark that the algorithm FindLattice(L, index) in principle may crash. This can only happen in the assignment
\begin{verbatim}
v = PointList[index]
\end{verbatim}
in the first while loop, because the variable index may be out of bounds, or in other words the variable index might be at least the length of the list $P$. We will first show that this does not happen for $n \leq 6$, and here we will of course use the specific shape of $P$ in equation (\ref{eP}).

We run the algorithm with $L = (\{0\})_{1 \leq i \leq 6}$ on the computer, and we find that the algorithm terminates without crashing. Now suppose that
\begin{align}
\label{eForceP}
P \subseteq \bigcup_{1 \leq i \leq 6} L_i
\end{align}
for some subgroups $L_i \subseteq \Z^2$. Then we claim that
\[
\bigcup_{1 \leq i \leq 6} L_i = \Z^2.
\]
Suppose that the claim is false. We will show that then the algorithm crashes. Indeed, take such a list $(L_i)_{1 \leq i \leq 6}$. By equation (\ref{eForceP}) we may ensure that we recursively call the routine FindLattice(K, index) with a list $(K_i)_{1 \leq i \leq 6}$ satisfying
\[
(K_i)_{1 \leq i \leq 6} \preccurlyeq (L_i)_{1 \leq i \leq 6}.
\]
But then the algorithm must crash.

From the claim we deduce that the algorithm does not crash on all valid inputs for $n \leq 6$. Furthermore, it is also clear that the algorithm terminates if it does not crash, since at each recursive call we increase the variable index by $1$.

It remains to show that the algorithm gives the claimed output. Take a tuple $(L_i')_{1 \leq i \leq 6}$ satisfying
\[
\bigcup_{1 \leq i \leq 6} L_i' = \Z^2, \quad (L_i)_{1 \leq i \leq 6} \preccurlyeq (L_i')_{1 \leq i \leq 6}, \quad L_i' \neq \Z^2 \text{ for all } i.
\]
To find our tuple $(K_i)_{1 \leq i \leq 6}$, we simply enforce that $(K_i)_{1 \leq i \leq 6} \preccurlyeq (L_i')_{1 \leq i \leq 6}$ at each recursive step.
\end{proof}

\subsubsection{Reducing the list}
The above algorithm gives an output of $6131$ elements. However, many of those are not minimal under the ordering $\preccurlyeq$. We prune them to a list of only $54$ elements using the following algorithm.

\begin{verbatim}
n = 6
V = FreeModule(ZZ,2)
TrivialModule = V.zero_submodule()
e1 = vector((1, 0))
e2 = vector((0, 1))

Answer = FindLattice([TrivialModule for i in range(n)], 0)
RealAnswer = [[TrivialModule for i in range(n)] for 
	j in range(Integer(len(Answer)/n))]

for i in range(Integer(len(Answer)/n)):
    RealAnswer[i] = prune([Answer[n * i + j] for j in range(n)])
    
ShortList = []
boolean = True
for i in range(len(RealAnswer)):
    for j in range(len(ShortList)):
        boolean = True
        if containment(ShortList[j], RealAnswer[i]):
            boolean = False
            break
    if boolean:
        ShortList.append(RealAnswer[i])
        
print(ShortList)
\end{verbatim}

The subroutine prune checks if one can remove any element of the list $(L_i)_{1 \leq i \leq n}$ such that the result still has union $\Z^2$. If so, it removes such elements from the list.

\begin{verbatim}
def prune(L):
    for i in range(len(L)):
        Lold2 = L[i]
        L[i] = TrivialModule
        if not isCover(L):
            L[i] = Lold2
            
    zeroCount = 0
    for i in range(len(L)):
        if L[i] == TrivialModule:
            zeroCount = zeroCount + 1
            
    for i in range(len(L) - zeroCount):
        if L[i] == TrivialModule:
            for j in range(len(L) - zeroCount, len(L)):
                if L[j] != TrivialModule:
                    L[i] = L[j]
                    L[j] = TrivialModule
                    break
                    
    return(L)
\end{verbatim}

Finally, the subroutine containment returns true if and only if $L_1 \preccurlyeq L_2$. We remark that SageMath considers permutations on $\{1, \dots, n\}$, while indices of lists start at $0$. Therefore we use $\sigma(i + 1) - 1$ to get a permutation on $\{0, \dots, n - 1\}$.

\begin{verbatim}
def containment(L1, L2):
    for iter in range(len(L1)):
        if L1[iter] == TrivialModule:
            break

    G = SymmetricGroup(iter + 1)    
    for sigma in G:
        boolean = True
        for i in range(len(L1)):
            genList = L1[i].gens()
            for j in range(len(genList)):
                if genList[j] not in L2[sigma(i + 1) - 1]:
                    boolean = False
            
        if boolean:
            return True

    return False
\end{verbatim}

\subsection{Gr\"obner bases}
Here we shall provide full proofs of Lemma \ref{lSX3.2} and Lemma \ref{lTripleVanishing}. The following routine will be essential.

\vskip .3cm

\noindent Routine Groebner(L).\\
\noindent Input: a list $L$ of polynomials in $\mathbb{Z}[\mathbf{x}]$. \\
\noindent Output: return True if and only if $3$ is in the Gr\"obner basis of the ideal generated by the elements in $L$.

\begin{verbatim}
def Groebner(L):
    I = ideal(L)
    B = I.groebner_basis()

    if 3 in B:
        return True
    else:
        return False
\end{verbatim}

\noindent We will now define the polynomials from Lemma \ref{lLatticeEquationsbis} in SageMath.

\begin{verbatim}
R.<t1, t2, t3, t4, u1, u2, u3, u4> = ZZ['t1, t2, t3, t4, u1 ,u2, u3, u4']
R1 = t1 * t2 + t2 * t3 + t3 * t4
R2 = t2 * t2 + t2 * t4 + t4 * t4

Rsquared1 = t1 * t2 + t1 * t4 + t3 * t4
Rsquared2 = t2 * t2 + t2 * t4 + t4 * t4

S1 = t3 * t4 - t1 * t2
S2 = t4 * t4 - t2 * t2

RS1 = t1 * t2 + t1 * t4 + t2 * t3
RS2 = t2 * t2 + 2 * t2 * t4

RsquaredS1 = t1 * t4 + t2 * t3 + t3 * t4
RsquaredS2 = t4 * t4 + 2 * t2 * t4

coprimet2t4 = u2 * t2 + u4 * t4 - 1
coprimet1t3 = u1 * t1 + u3 * t3 - 1
coprime = u1 * t1 + u2 * t2 + u3 * t3 + u4 * t4 - 1
\end{verbatim}

\subsubsection{Proof of Lemma \ref{lSX3.2}}
\label{aSX3.2}
In order to prove Lemma \ref{lSX3.2} in the other cases, we start by defining the polynomials in the bottom equations. We will only need the polynomial appearing in front of $x_1$.

\begin{verbatim}
botR = t1 * t1 + t1 * t3 + t3 * t3
botRsquared = t1 * t1 + t1 * t3 + t3 * t3
botS = t1 * t1 - t3 * t3
botRS = t1 * t1 + 2 * t1 * t3
botRsquaredS = t3 * t3 + 2 * t1 * t3
\end{verbatim}

\noindent We may now prove Lemma \ref{lSX3.2} in full generality.

\begin{verbatim}
print(Groebner([R1, R2, Rsquared1, Rsquared2, botR, botRsquared, coprime]))
print(Groebner([R1, R2, S1, S2, botR, botS, coprime]))
print(Groebner([R1, R2, RS1, RS2, botR, botRS, coprime]))
print(Groebner([R1, R2, RsquaredS1, RsquaredS2, 
 botR, botRsquaredS, coprime]))
print(Groebner([Rsquared1, Rsquared2, S1, S2, botRsquared, botS, coprime]))
print(Groebner([Rsquared1, Rsquared2, RS1, RS2, 
 botRsquared, botRS, coprime]))
print(Groebner([Rsquared1, Rsquared2, RsquaredS1, RsquaredS2, 
 botRsquared, botRsquaredS, coprime]))
print(Groebner([S1, S2, RS1, RS2, botS, botRS, coprime]))
print(Groebner([S1, S2, RsquaredS1, RsquaredS2, 
 botS, botRsquaredS, coprime]))
print(Groebner([RS1, RS2, RsquaredS1, RsquaredS2, 
 botRS, botRsquaredS, coprime]))
\end{verbatim}

Once more we see that the first prompt returns False, and this corresponds precisely to the case that $\sigma$ and $\sigma'$ both have order $3$.

\subsubsection{Proof of Lemma \ref{lTripleVanishing}}
\label{aTripleVanishing}
We will now prove Lemma \ref{lTripleVanishing}.

\begin{verbatim}
print(Groebner([R1, Rsquared1, S1, coprimet1t3, coprimet2t4]))
print(Groebner([R1, Rsquared1, RS1, coprimet1t3, coprimet2t4]))
print(Groebner([R1, Rsquared1, RsquaredS1, coprimet1t3, coprimet2t4]))
print(Groebner([R1, S1, RS1, coprimet1t3, coprimet2t4]))
print(Groebner([R1, S1, RsquaredS1, coprimet1t3, coprimet2t4]))
print(Groebner([R1, RS1, RsquaredS1, coprimet1t3, coprimet2t4]))
print(Groebner([Rsquared1, S1, RS1, coprimet1t3, coprimet2t4]))
print(Groebner([Rsquared1, S1, RsquaredS1, coprimet1t3, coprimet2t4]))
print(Groebner([Rsquared1, RS1, RsquaredS1, coprimet1t3, coprimet2t4]))
print(Groebner([S1, RS1, RsquaredS1, coprimet1t3, coprimet2t4]))
\end{verbatim}

We remark that in this case the last prompt returns False, which is caused by the exceptional case $\{T_2^{-1}ST_2, T_2^{-1}RST_2, T_2^{-1}R^2ST_2\}$. However, we see that $t_1^2 + t_1t_3 + t_3^2$ is in the Gr\"obner basis for the exceptional case. If $\gcd(t_1, t_3, p) = 1$, then this can only be zero modulo $p$ if $t_1$ and $t_3$ are both invertible. But then we see that $x^2 + x + 1 \equiv 0 \bmod p$ has a solution. For a prime $p$, this implies that $p \equiv 1 \bmod 3$. For $p = 9$, we directly check that there are no solutions to $x^2 + x + 1 \equiv 0 \bmod 9$.

\subsection{Brute force searches}
\label{aBruteForce}
Here we will prove Lemma \ref{lZxnx}.

\subsubsection{Computing the number of equivalence classes}
Let $\mathcal{S}$ be a subset of $(\Z/n\Z)^2$. We can decompose $\mathcal{S}$ as $\mathcal{S}_{<n}$ and $\mathcal{S}_n$ of elements of order respectively smaller than $n$ and exactly equal to $n$. Write $m$ for the number of equivalence classes in $\mathcal{S}_n$ under the equivalence relation $\sim$ defined in Definition \ref{dZn}. The next code computes
\[
|\mathcal{S}_{<n}| + m.
\]

\begin{verbatim}
def Total(L, n):
    total = 0
    l = len(L)
    
    for i in range(l):
        boolean = True
        point = L[i]
        if gcd(point[0], n) != 1 and gcd(point[1], n) != 1:
            total = total + 1
        else:
            for j in range(i):
                for k in range(n):
                    if gcd(k, n) == 1 and k * point[0] == L[j][0] and 
                     k * point[1] == L[j][1]:
                        boolean = False
            if boolean:            
                total = total + 1          
    return total
\end{verbatim}

\subsubsection{\texorpdfstring{Proof of Lemma \ref{lZxnx} for $3$, $4$ and $5$}{Proof of Lemma 4.12 for 3, 4, 5}}
We are now ready to prove Lemma \ref{lZxnx}. Consider the following code.

\begin{verbatim}
def BruteForce(n):
 for a in [0..n - 1]:
 for b in [0..n - 1]:
 for c in [0..n - 1]:
 for d in [0..n - 1]:
 x1 = Mod(c * d - a * b, n)
 y1 = Mod(d * d - b * b, n)
 x2 = Mod(a * b + a * d + b * c, n)
 y2 = Mod(b * b + 2 * b * d, n)
 x3 = Mod(a * d + b * c + c * d, n)
 y3 = Mod(d * d + 2 * b * d, n)
 x4 = Mod(a * b + b * c + c * d, n)
 y4 = Mod(b * b + b * d + d * d, n)
 x5 = Mod(a * b + a * d + c * d, n)
 y5 = Mod(b * b + b * d + d * d, n)
 
 L = [vector((1, 0)), vector((x1, y1)), 
 vector((x2, y2)), vector((x3, y3)), 
 vector((x4, y4)), vector((x5, y5))]
 count = Total(L, n)
 
 if count > 3 and gcd(n, gcd(b, d)) == 1:
 print(count)
 print('(' + str(x1) + ',' + str(y1) + ')' + ' AND ' 
 + '(' + str(x2) + ',' + str(y2) + ')' + ' AND ' 
 + '(' + str(x3) + ',' + str(y3) + ')' + ' AND ' 
 + '(' + str(x4) + ',' + str(y4) + ')' + ' AND ' 
 + '(' + str(x5) + ',' + str(y5) + ')' 
 + ' AND (1, 0)')
 print(a, b, c, d)
\end{verbatim}

We run BruteForce(3), BruteForce(4) and BruteForce(5). If $\mathbf{t} = (t_1, t_2, t_3, t_4)$ satisfies $\gcd(t_2, t_4, x) = 1$, then the variable count is exactly equal to $Z_{\mathbf{t}, \text{top}}(x) + n_{\mathbf{t}, \text{top}}(x)$. Note that output gets printed precisely when the variable count is strictly larger than $3$. We now examine the output for $x \in \{3, 4, 5\}$.

If $x = 5$, then no output gets printed, which proves Lemma \ref{lZxnx} in that case. If $x = 4$, then the variable count can only equal $4$ when $Z_{\mathbf{t}, \text{top}}(4) = 1$ coming from some $\sigma$ with $(p_{1, \sigma}(\mathbf{t}), p_{2, \sigma}(\mathbf{t})) \equiv (2, 0) \bmod 4$. Examining the output, we see that this is indeed the case proving alinea $(i)$. We will prove the contents of alinea $(ii)$ and alinea $(iii)$ in the next subsubsection. If $x = 3$, then Lemma \ref{lZxnx} dictates that the variable count is greater than $3$ exactly when $\mathbf{t}$ lies in $\mathcal{T}_3$ defined in \eqref{eBadt}. In this case we get that $(p_{1, \sigma}(\mathbf{t}), p_{2, \sigma}(\mathbf{t})) \equiv (0, 0) \bmod 3$ for all $\sigma \in \mathfrak{S}^\flat$. This is indeed the case.

\subsubsection{Additional results for 4}
We will now focus on the supplemental results for $x = 4$, i.e. Lemma \ref{lZxnx} alinea $(ii)$ and alinea $(iii)$. For this we use the following routine.

\begin{verbatim}
def check(L):
    badnessCounter = 0
    boolean1 = True
    boolean2 = True
    
    for i in range(len(L)):
        if boolean1 and (L[i] == vector((0, 1)) or L[i] == vector((0, 3))):
            badnessCounter = badnessCounter + 1
            boolean1 = False
        if boolean2 and (L[i] == vector((2, 1)) or L[i] == vector((2, 3))):
            badnessCounter = badnessCounter + 1
            boolean2 = False
        if L[i] == vector((2, 0)) or L[i] == vector((0, 2)) or 
         L[i] == vector((0, 0)) or L[i] == vector((2, 2)):
            badnessCounter = badnessCounter + 1
            
    return badnessCounter
\end{verbatim}

Suppose that $\gcd(t_1, t_3, 2) = 1$. Then badnessCounter is at most $1$ if and only if alinea $(ii)$ and alinea $(iii)$ of Lemma \ref{lZxnx} are true. This motivates us to insert the following code in the main body

\begin{verbatim}
badness = check(L)
if badness > 1 and gcd(2, gcd(a, c)) == 1 and gcd(2, gcd(b, d)) == 1:
    print("ERROR!")
\end{verbatim}

\noindent Running this script produces no output, so alinea $(ii)$ and alinea $(iii)$ of Lemma \ref{lZxnx} hold.

\subsubsection{\texorpdfstring{Proof of Lemma \ref{messfor9} for $9$}{Proof of Lemma 4.13 for 9}}
For the part about the reduction modulo $9$, we consider the following variation of BruteForce(n).

\begin{verbatim}
def BruteForce(n, BadTuple1, BadTuple2, BadTuple3, BadTuple4):
    for a in [0..n - 1]:
        for b in [0..n - 1]:
            for c in [0..n - 1]:
                for d in [0..n - 1]:
                    a1 = 3 * a + BadTuple1
                    b1 = 3 * b + BadTuple2
                    c1 = 3 * c + BadTuple3
                    d1 = 3 * d + BadTuple4
 
                    x1 = Mod((c1 * d1 - a1 * b1)/n, n)
                    y1 = Mod((d1 * d1 - b1 * b1)/n, n)
                    x2 = Mod((a1 * b1 + a1 * d1 + b1 * c1)/n, n)
                    y2 = Mod((b1 * b1 + 2 * b1 * d1)/n, n)
                    x3 = Mod((a1 * d1 + b1 * c1 + c1 * d1)/n, n)
                    y3 = Mod((d1 * d1 + 2 * b1 * d1)/n, n)
                    x4 = Mod((a1 * b1 + b1 * c1 + c1 * d1)/n, n)
                    y4 = Mod((b1 * b1 + b1 * d1 + d1 * d1)/n, n)
                    x5 = Mod((a1 * b1 + a1 * d1 + c1 * d1)/n, n)
                    y5 = Mod((b1 * b1 + b1 * d1 + d1 * d1)/n, n)
 
                    L = [vector((1, 0)), vector((x1, y1)), 
                    vector((x2, y2)), vector((x3, y3)), 
                    vector((x4, y4)), vector((x5, y5))]
                    count = Total(L, n)
 
                    if count > 3:
                        print('(' + str(x1) + ',' + str(y1) + ')' + ' AND ' 
                         + '(' + str(x2) + ',' + str(y2) + ')' + ' AND ' 
                         + '(' + str(x3) + ',' + str(y3) + ')' + ' AND ' 
                         + '(' + str(x4) + ',' + str(y4) + ')' + ' AND ' 
                         + '(' + str(x5) + ',' + str(y5) + ')' 
                         + ' AND (1, 0)')
                        print(a, b, c, d)
                        print(x1, y1, x2, y2, x3, y3, x4, y4, x5, y5)
\end{verbatim}

\noindent We now run BruteForce(3, 0, 1, 0, 1), BruteForce(3, 1, 1, 1, 1), BruteForce(3, 1, 2, 1, 2) to complete the proof of Lemma \ref{messfor9} as the script does not print any output.

\subsubsection{\texorpdfstring{Proof of last part of Lemma \ref{lTripleVanishing}}{Proof of last part of Lemma 4.14}}
\label{a414}
We run a minor variation of the usual brute force script to demonstrate the last part of Lemma \ref{lTripleVanishing}.

\begin{verbatim}
def BruteForce2(n):
    for a in [0..n - 1]:
        for b in [0..n - 1]:
            for c in [0..n - 1]:
                for d in [0..n - 1]:
                    x1 = Mod(c * d - a * b, n)
                    x2 = Mod(a * b + a * d + b * c, n)
                    x3 = Mod(a * d + b * c + c * d, n)
                    x4 = Mod(a * b + b * c + c * d, n)
                    x5 = Mod(a * b + a * d + c * d, n)
                    count = 0
                    if x1 == 0:
                        count = count + 1
                    if x2 == 0:
                        count = count + 1
                    if x3 == 0:
                        count = count + 1
                    if x4 == 0:
                        count = count + 1
                    if x5 == 0:
                        count = count + 1
                    if count > 2 and gcd(a, c) != 0 and gcd(b, d) != 0:   
                        print(a, b, c, d)
                        
BruteForce2(3)
\end{verbatim}

\noindent The output is precisely the set $\mathcal{T}_3^\ast$ in accordance with the last part of Lemma \ref{lTripleVanishing}.

\section{Output of main algorithm}
\label{a2}
Below we give the complete list (up to permutations) of the $54$ minimal coverings of $\Z^2$ requiring 3, 4, 5 or 6 lattices. Each lattice is described by a $\Z$--basis: the matrix 
\[
\begin{pmatrix} 
a & b \\ 
c & d
\end{pmatrix}
\]
in the cell corresponding to the lattice $L_i$ means that $L_i= \Z\cdot \begin{pmatrix} a\\c\end{pmatrix} \oplus \Z \cdot \begin{pmatrix} b \\ d \end{pmatrix}$.

\begin{center}
\begin{longtable}{| c | c | c | c | c | c | c |}
\hline
Entry & $L_1$ & $L_2$ & $L_3$ & $L_4$ & $L_5$ & $L_6$ \\ \hline
1 & $\begin{pmatrix}1 & 0 \\ 0 &2\end{pmatrix}$ & $\begin{pmatrix}2 & 0 \\ 0 & 1\end{pmatrix}$ & $\begin{pmatrix}1 & 0 \\ 1 & 2 \end{pmatrix}$ & $ $ & $ $ & $ $ \\ \hline
 
2 & $\begin{pmatrix}1 & 0 \\ 0 & 2\end{pmatrix}$ & $\begin{pmatrix}4 & 0 \\ 0 & 1\end{pmatrix}$ & $\begin{pmatrix}1 & 0 \\ 1 & 2\end{pmatrix}$ & $\begin{pmatrix}2 & 0 \\ 1 & 2\end{pmatrix}$ & $ $ & $ $ \\ \hline

3 & $\begin{pmatrix}1 & 0 \\ 0 & 2\end{pmatrix}$ & $\begin{pmatrix}8 & 0 \\ 0 & 1\end{pmatrix}$ & $\begin{pmatrix}1 & 0 \\ 1 & 2\end{pmatrix}$ & $\begin{pmatrix}2 & 0 \\ 1 & 2\end{pmatrix}$ & $\begin{pmatrix}4 & 0 \\ 1 & 2\end{pmatrix}$ & $ $ \\ \hline
 
4 & $\begin{pmatrix}1 & 0 \\ 0 & 2\end{pmatrix}$ & $\begin{pmatrix}16 & 0 \\ 0 & 1\end{pmatrix}$ & $\begin{pmatrix}1 & 0 \\ 1 & 2\end{pmatrix}$ & $\begin{pmatrix}2 & 0 \\ 1 & 2\end{pmatrix}$ & $\begin{pmatrix}4 & 0 \\ 1 & 2\end{pmatrix}$ & $\begin{pmatrix}8 & 0 \\ 1 & 2\end{pmatrix}$ \\ \hline

5 & $\begin{pmatrix}1 & 0 \\ 0 & 2\end{pmatrix}$ & $\begin{pmatrix}8 & 0 \\ 0 & 1\end{pmatrix}$ & $\begin{pmatrix}1 & 0 \\ 1 & 2\end{pmatrix}$ & $\begin{pmatrix}2 & 0 \\ 1 & 2\end{pmatrix}$ & $\begin{pmatrix}4 & 0 \\ 1 & 4\end{pmatrix}$ & $\begin{pmatrix}4& 0 \\ 3 & 4\end{pmatrix}$ \\ \hline

6 & $\begin{pmatrix}1& 0 \\ 0 & 2\end{pmatrix}$ & $\begin{pmatrix}4 & 0 \\ 0 & 1\end{pmatrix}$ & $\begin{pmatrix}1 & 0 \\ 1 & 2\end{pmatrix}$ & $\begin{pmatrix}2 & 0 \\ 1 & 4\end{pmatrix}$ & $\begin{pmatrix}2 & 0 \\ 3 & 4\end{pmatrix}$ & $ $ \\ \hline

7 & $\begin{pmatrix}1 & 0 \\ 0 & 2\end{pmatrix}$ & $\begin{pmatrix}4 & 0 \\ 0 & 1\end{pmatrix}$ & $\begin{pmatrix}1 & 0 \\ 1 & 2\end{pmatrix}$ & $\begin{pmatrix}2 & 0 \\ 1 & 4\end{pmatrix}$ & $\begin{pmatrix}2 & 0 \\ 3 & 8\end{pmatrix}$ & $\begin{pmatrix}2 & 0 \\ 7 & 8\end{pmatrix}$ \\ \hline

8 & $\begin{pmatrix}1 & 0 \\ 0 & 2\end{pmatrix}$ & $\begin{pmatrix}4 & 0 \\ 0 & 1\end{pmatrix}$ & $\begin{pmatrix}1 & 0 \\ 1 & 2\end{pmatrix}$ & $\begin{pmatrix}2 & 0 \\ 1& 8\end{pmatrix}$ & $\begin{pmatrix}2 & 0 \\ 3 & 4\end{pmatrix}$ & $\begin{pmatrix}2 & 0 \\ 5 & 8\end{pmatrix}$ \\ \hline

9 & $\begin{pmatrix}1 & 0 \\ 0 & 2\end{pmatrix}$ & $\begin{pmatrix}8 & 0 \\ 0 & 1\end{pmatrix}$ & $\begin{pmatrix}1 & 0 \\ 1 & 2\end{pmatrix}$ & $\begin{pmatrix} 2 & 0 \\ 1 & 4\end{pmatrix}$ & $\begin{pmatrix}2 & 0 \\ 3 & 4\end{pmatrix}$ & $\begin{pmatrix}4 & 0 \\ 1 & 2\end{pmatrix}$ \\ \hline

10 & $\begin{pmatrix}1 & 0 \\ 0 & 2\end{pmatrix}$ & $\begin{pmatrix}6 & 0 \\ 0 & 1\end{pmatrix}$ & $\begin{pmatrix}1 & 0 \\ 1 & 2\end{pmatrix}$ & $\begin{pmatrix}2 & 0 \\ 1 & 3\end{pmatrix}$ & $\begin{pmatrix}2 & 0 \\ 0 & 3\end{pmatrix}$ & $\begin{pmatrix}2 & 0 \\ 2 & 3\end{pmatrix}$ \\ \hline

11 & $\begin{pmatrix}1 & 0 \\ 0 & 4\end{pmatrix}$ & $\begin{pmatrix}2 & 0 \\ 0 & 1\end{pmatrix}$ & $\begin{pmatrix}1 & 0 \\ 1& 2\end{pmatrix}$ & $\begin{pmatrix}1 & 0 \\ 2 & 4\end{pmatrix}$ & $ $ & $ $ \\ \hline

12 & $\begin{pmatrix}1 & 0 \\ 0 & 4\end{pmatrix}$ & $\begin{pmatrix}2 & 0 \\ 0 & 1\end{pmatrix}$ & $\begin{pmatrix}1 & 0 \\ 1 & 2\end{pmatrix}$ & $\begin{pmatrix}1 & 0 \\ 2 & 8\end{pmatrix}$ & $\begin{pmatrix}1 & 0 \\ 6 & 8\end{pmatrix}$ & $ $ \\ \hline

13 & $\begin{pmatrix}1 & 0 \\ 0 & 4\end{pmatrix}$ & $\begin{pmatrix}2 & 0 \\ 0 & 1\end{pmatrix}$ & $\begin{pmatrix}1 & 0 \\ 1 & 2\end{pmatrix}$ & $\begin{pmatrix}1 & 0 \\ 2 & 8\end{pmatrix}$ & $\begin{pmatrix}1 & 0 \\ 6 & 16\end{pmatrix}$ & $\begin{pmatrix}1 & 0 \\ 14 & 16\end{pmatrix}$ \\ \hline

14 & $\begin{pmatrix}1 & 0 \\ 0 & 4\end{pmatrix}$ & $\begin{pmatrix}2 & 0 \\ 0 & 1\end{pmatrix}$ & $\begin{pmatrix}1 & 0 \\ 1 & 2\end{pmatrix}$ & $\begin{pmatrix}1 & 0 \\ 2 & 16\end{pmatrix}$ & $\begin{pmatrix}1 & 0 \\ 6 & 8\end{pmatrix}$ & $\begin{pmatrix}1 & 0 \\ 10 & 16\end{pmatrix}$ \\ \hline

15 & $\begin{pmatrix}1 & 0 \\ 0 & 8\end{pmatrix}$ & $\begin{pmatrix}2 & 0 \\ 0 & 1\end{pmatrix}$ & $\begin{pmatrix}1 & 0 \\ 1 & 2\end{pmatrix}$ & $\begin{pmatrix}1 & 0 \\ 2 & 4\end{pmatrix}$ & $\begin{pmatrix}1 & 0 \\ 4 & 8\end{pmatrix}$ & $ $ \\ \hline

16 & $\begin{pmatrix}1 & 0 \\ 0 & 16\end{pmatrix}$ & $\begin{pmatrix}2 & 0 \\ 0 & 1\end{pmatrix}$ & $\begin{pmatrix}1 & 0 \\ 1 & 2\end{pmatrix}$ & $\begin{pmatrix}1 & 0 \\ 2 & 4\end{pmatrix}$ & $\begin{pmatrix}1 & 0 \\ 4 & 8\end{pmatrix}$ & $\begin{pmatrix}1 & 0 \\ 8 & 16\end{pmatrix}$ \\ \hline

17 & $\begin{pmatrix}1 & 0 \\ 0 & 8\end{pmatrix}$ & $\begin{pmatrix} 2 & 0 \\ 0 & 1\end{pmatrix}$ & $\begin{pmatrix}1 & 0 \\ 1 & 2\end{pmatrix}$ & $\begin{pmatrix}1 & 0 \\ 2 & 4\end{pmatrix}$ & $\begin{pmatrix}1 & 0 \\ 4 & 16\end{pmatrix}$ & $\begin{pmatrix}1 & 0 \\ 12 & 16\end{pmatrix}$ \\ \hline

18 & $\begin{pmatrix}1 & 0 \\ 0 & 6\end{pmatrix}$ & $\begin{pmatrix}2 & 0 \\ 0 & 1\end{pmatrix}$ & $\begin{pmatrix}1 & 0 \\ 1 & 2\end{pmatrix}$ & $\begin{pmatrix}1 & 0 \\ 2 & 6\end{pmatrix}$ & $\begin{pmatrix}1 & 0 \\ 4 & 6\end{pmatrix}$ & $\begin{pmatrix}3 & 0 \\ 0 & 2\end{pmatrix}$ \\ \hline

19 & $\begin{pmatrix}1 & 0 \\ 0 & 8\end{pmatrix}$ & $\begin{pmatrix}2 & 0 \\ 0 & 1\end{pmatrix}$ & $\begin{pmatrix}1 & 0 \\ 1 & 2\end{pmatrix}$ & $\begin{pmatrix}1 & 0 \\ 2 & 8\end{pmatrix}$ & $\begin{pmatrix}1 & 0 \\ 4 & 8\end{pmatrix}$ & $\begin{pmatrix}1 & 0 \\ 6 & 8\end{pmatrix}$ \\ \hline

20 & $\begin{pmatrix}1 & 0 \\ 0 & 3\end{pmatrix}$ & $\begin{pmatrix}3 & 0 \\ 0 & 1\end{pmatrix}$ & $\begin{pmatrix}1 & 0 \\ 1 & 3\end{pmatrix}$ & $\begin{pmatrix}1 & 0 \\ 2 & 3\end{pmatrix}$ & $ $ & $ $ \\ \hline

21 & $\begin{pmatrix}1 & 0 \\ 0 & 4\end{pmatrix}$ & $\begin{pmatrix}4 & 0 \\ 0 & 1\end{pmatrix}$ & $\begin{pmatrix}1 & 0 \\ 1 & 2\end{pmatrix}$ & $\begin{pmatrix}1 & 0 \\ 2 & 4\end{pmatrix}$ & $\begin{pmatrix}2 & 0 \\ 1 & 2\end{pmatrix}$ & $ $ \\ \hline

22 & $\begin{pmatrix}1 & 0 \\ 0 & 4\end{pmatrix}$ & $\begin{pmatrix}8 & 0 \\ 0 & 1\end{pmatrix}$ & $\begin{pmatrix}1 & 0 \\ 1 & 2\end{pmatrix}$ & $\begin{pmatrix}1 & 0 \\ 2 & 4\end{pmatrix}$ & $\begin{pmatrix}2 & 0 \\ 1 & 2\end{pmatrix}$ & $\begin{pmatrix}4 & 0 \\ 1& 2\end{pmatrix}$ \\ \hline

23 & $\begin{pmatrix}1 & 0 \\ 0 & 4\end{pmatrix}$ & $\begin{pmatrix} 4 & 0 \\ 0 & 1\end{pmatrix}$ & $\begin{pmatrix}1 & 0 \\ 1 & 2\end{pmatrix}$ & $\begin{pmatrix}1 & 0 \\ 2 & 8\end{pmatrix}$ & $\begin{pmatrix}2 & 0 \\ 1 & 2 \end{pmatrix}$ & $\begin{pmatrix}1 & 0 \\ 6 & 8\end{pmatrix}$ \\ \hline

24 & $\begin{pmatrix}1 & 0 \\ 0 & 4\end{pmatrix}$ & $\begin{pmatrix} 4 & 0 \\ 0 & 1\end{pmatrix}$ & $\begin{pmatrix} 1 & 0 \\ 1 & 2\end{pmatrix}$ & $\begin{pmatrix}1 & 0 \\ 2 & 4\end{pmatrix}$ & $\begin{pmatrix}2 & 0 \\ 1 & 4\end{pmatrix}$ & $\begin{pmatrix}2 & 0 \\ 3 & 4\end{pmatrix}$ \\ \hline

25 & $\begin{pmatrix}1 & 0 \\ 0 & 8\end{pmatrix}$ & $\begin{pmatrix} 4 & 0 \\ 0 & 1\end{pmatrix}$ & $\begin{pmatrix}1 & 0 \\ 1 & 2\end{pmatrix}$ & $\begin{pmatrix}1 & 0 \\ 2 & 4\end{pmatrix}$ & $\begin{pmatrix}2 & 0 \\ 1 & 2\end{pmatrix}$ & $\begin{pmatrix}1 & 0 \\ 4 & 8\end{pmatrix}$ \\ \hline

26 & $\begin{pmatrix}1 & 0 \\ 0 & 2 \end{pmatrix}$ & $\begin{pmatrix}2 & 0 \\ 0 & 1\end{pmatrix}$ & $\begin{pmatrix}1 & 0 \\ 1 & 4\end{pmatrix}$ & $\begin{pmatrix}1 & 0 \\ 3 & 4 \end{pmatrix}$ & $ $ & $ $ \\ \hline

27 & $\begin{pmatrix}1 & 0 \\ 0 & 2 \end{pmatrix}$ & $\begin{pmatrix} 2 & 0 \\ 0 & 1\end{pmatrix}$ & $\begin{pmatrix}1 & 0 \\ 1 & 8 \end{pmatrix}$ & $\begin{pmatrix}1 & 0 \\ 3 & 4\end{pmatrix}$ & $\begin{pmatrix}1 & 0 \\ 5 & 8\end{pmatrix}$ & $ $ \\ \hline

28 & $\begin{pmatrix}1 & 0 \\ 0 & 2\end{pmatrix}$ & $\begin{pmatrix} 2 & 0 \\ 0 & 1\end{pmatrix}$ & $\begin{pmatrix}1 & 0 \\ 1 & 16\end{pmatrix}$ & $\begin{pmatrix}1 & 0 \\ 3 & 4 \end{pmatrix}$ & $\begin{pmatrix}1 & 0 \\ 5 & 8\end{pmatrix}$ & $\begin{pmatrix}1 & 0 \\ 9 & 16\end{pmatrix}$ \\ \hline

29 & $\begin{pmatrix}1 & 0 \\ 0 & 2\end{pmatrix}$ & $\begin{pmatrix} 2 & 0 \\ 0 & 1\end{pmatrix}$ & $\begin{pmatrix} 1 & 0 \\ 1 & 8 \end{pmatrix}$ & $\begin{pmatrix}1 & 0 \\ 3 & 4\end{pmatrix}$ & $\begin{pmatrix}1 & 0 \\ 13 & 16 \end{pmatrix}$ & $\begin{pmatrix}1 & 0 \\ 5 & 16\end{pmatrix}$ \\ \hline

30 & $\begin{pmatrix}1 & 0 \\ 0 & 2\end{pmatrix}$ & $\begin{pmatrix} 2 & 0 \\ 0 & 1\end{pmatrix}$ & $\begin{pmatrix}1 & 0 \\ 1 & 4\end{pmatrix}$ & $\begin{pmatrix}1 & 0 \\ 7 & 8\end{pmatrix}$ & $\begin{pmatrix}1 & 0 \\ 3 & 8\end{pmatrix}$ & $ $ \\ \hline

31 & $\begin{pmatrix}1 & 0 \\ 0 & 2\end{pmatrix}$ & $\begin{pmatrix} 2 & 0 \\ 0 & 1\end{pmatrix}$ & $\begin{pmatrix}1 & 0 \\ 1 & 4 \end{pmatrix}$ & $\begin{pmatrix}1 & 0 \\ 15 & 16 \end{pmatrix}$ & $\begin{pmatrix}1 & 0 \\ 3 & 8\end{pmatrix}$ & $\begin{pmatrix}1 & 0 \\ 7 & 16\end{pmatrix}$ \\ \hline

32 & $\begin{pmatrix}1 & 0 \\ 0 & 2 \end{pmatrix}$ & $\begin{pmatrix} 2 & 0 \\ 0 & 1\end{pmatrix}$ & $\begin{pmatrix}1 & 0 \\ 1 & 8\end{pmatrix}$ & $\begin{pmatrix}1 & 0 \\ 7 & 8 \end{pmatrix}$ & $\begin{pmatrix}1 & 0 \\ 3 & 8 \end{pmatrix}$ & $\begin{pmatrix}1 & 0 \\ 5 & 8 \end{pmatrix}$ \\ \hline

33 & $\begin{pmatrix}1 & 0 \\ 0 & 2 \end{pmatrix}$ & $\begin{pmatrix} 2 & 0 \\ 0 & 1\end{pmatrix}$ & $\begin{pmatrix}1 & 0 \\ 1 & 4 \end{pmatrix}$ & $\begin{pmatrix}1 & 0 \\ 7 & 8 \end{pmatrix}$ & $\begin{pmatrix}1 & 0 \\ 3 & 16 \end{pmatrix}$ & $\begin{pmatrix}1 & 0 \\ 11 & 16\end{pmatrix}$ \\ \hline

34 & $\begin{pmatrix}1 & 0 \\ 0 & 2 \end{pmatrix}$ & $\begin{pmatrix} 2 & 0 \\ 0 & 1\end{pmatrix}$ & $\begin{pmatrix}1 & 0 \\ 1 & 6 \end{pmatrix}$ & $\begin{pmatrix}1 & 0 \\ 5 & 6 \end{pmatrix}$ & $\begin{pmatrix}1 & 0 \\ 3 & 6 \end{pmatrix}$ & $\begin{pmatrix} 3 & 0 \\ 1 & 2\end{pmatrix}$ \\ \hline

35 & $\begin{pmatrix}1 & 0 \\ 0 & 2\end{pmatrix}$ & $\begin{pmatrix} 4 & 0 \\ 0 & 1\end{pmatrix}$ & $\begin{pmatrix} 1 & 0 \\ 1 & 4 \end{pmatrix}$ & $\begin{pmatrix} 2 & 0 \\ 1& 2 \end{pmatrix}$ & $\begin{pmatrix}1 & 0 \\ 3 & 4\end{pmatrix}$ & $ $ \\ \hline

36 & $\begin{pmatrix}1 & 0 \\ 0 & 2\end{pmatrix}$ & $\begin{pmatrix} 8 & 0 \\ 0 & 1\end{pmatrix}$ & $\begin{pmatrix}1 & 0 \\ 1 & 4\end{pmatrix}$ & $\begin{pmatrix}1 & 0 \\ 3 & 4\end{pmatrix}$ & $\begin{pmatrix}2 & 0 \\ 1 & 2\end{pmatrix}$ & $\begin{pmatrix} 4 & 0 \\ 1 & 2 \end{pmatrix}$ \\ \hline

37 & $\begin{pmatrix}1 & 0 \\ 0 & 2\end{pmatrix}$ & $\begin{pmatrix} 4 & 0 \\ 0 & 1\end{pmatrix}$ & $\begin{pmatrix}1 & 0 \\ 1 & 4\end{pmatrix}$ & $\begin{pmatrix}1 & 0 \\ 3 & 4\end{pmatrix}$ & $\begin{pmatrix} 2 & 0 \\ 1 & 4\end{pmatrix}$ & $\begin{pmatrix} 2 & 0 \\ 3 & 4\end{pmatrix}$ \\ \hline

38 & $\begin{pmatrix}1 & 0 \\ 0 & 2\end{pmatrix}$ & $\begin{pmatrix} 4 & 0 \\ 0 & 1\end{pmatrix}$ & $\begin{pmatrix}1 & 0 \\ 1 & 8 \end{pmatrix}$ & $\begin{pmatrix}1 & 0 \\ 3 & 4\end{pmatrix}$ & $\begin{pmatrix} 2 & 0 \\ 1 & 2\end{pmatrix}$ & $\begin{pmatrix} 1 & 0 \\ 5 & 8\end{pmatrix}$ \\ \hline

39 & $\begin{pmatrix}1 & 0 \\ 0 & 2\end{pmatrix}$ & $\begin{pmatrix}4 & 0 \\ 0 & 1\end{pmatrix}$ & $\begin{pmatrix}1 & 0 \\ 1 & 4\end{pmatrix}$ & $\begin{pmatrix}1 & 0 \\ 7 & 8\end{pmatrix}$ & $\begin{pmatrix}2 & 0 \\ 1 & 2\end{pmatrix}$ & $\begin{pmatrix}1 & 0 \\ 3 & 8\end{pmatrix}$ \\ \hline

40 & $\begin{pmatrix}1 & 0 \\ 0 & 3 \end{pmatrix}$ & $\begin{pmatrix} 3 & 0 \\ 0 & 1\end{pmatrix}$ & $\begin{pmatrix}1 & 0 \\ 1 & 6\end{pmatrix}$ & $\begin{pmatrix}1 & 0 \\ 2 & 3\end{pmatrix}$ & $\begin{pmatrix}1 & 0 \\ 4 & 6\end{pmatrix}$ & $\begin{pmatrix} 2 & 0 \\ 2 & 3\end{pmatrix}$ \\ \hline

41 & $\begin{pmatrix}1 & 0 \\ 0 & 3\end{pmatrix}$ & $\begin{pmatrix} 3 & 0 \\ 0 & 1\end{pmatrix}$ & $\begin{pmatrix}1 & 0 \\ 1 & 9\end{pmatrix}$ & $\begin{pmatrix}1 & 0 \\ 2 & 3\end{pmatrix}$ & $\begin{pmatrix}1 & 0 \\ 4 & 9\end{pmatrix}$ & $\begin{pmatrix}1 & 0 \\ 7 & 9\end{pmatrix}$ \\ \hline

42 & $\begin{pmatrix}1 & 0 \\ 0 & 3\end{pmatrix}$ & $\begin{pmatrix}6 & 0 \\ 0 & 1\end{pmatrix}$ & $\begin{pmatrix}1 & 0 \\ 1 & 3\end{pmatrix}$ & $\begin{pmatrix}1 & 0 \\ 2 & 3\end{pmatrix}$ & $\begin{pmatrix}3 & 0 \\ 1 & 2\end{pmatrix}$ & $\begin{pmatrix}3 & 0 \\ 0 & 2\end{pmatrix}$ \\ \hline

43 & $\begin{pmatrix}1 & 0 \\ 0 & 3\end{pmatrix}$ & $\begin{pmatrix} 9 & 0 \\ 0 & 1\end{pmatrix}$ & $\begin{pmatrix}1 & 0 \\ 1 & 3\end{pmatrix}$ & $\begin{pmatrix}1 & 0 \\ 2 & 3\end{pmatrix}$ & $\begin{pmatrix} 3 & 0 \\ 1 & 3\end{pmatrix}$ & $\begin{pmatrix} 3 & 0 \\ 2 & 3\end{pmatrix}$ \\ \hline

44 & $\begin{pmatrix}1 & 0 \\ 0 & 6\end{pmatrix}$ & $\begin{pmatrix} 3 & 0 \\ 0 & 1\end{pmatrix}$ & $\begin{pmatrix}1 & 0 \\ 1 & 3\end{pmatrix}$ & $\begin{pmatrix}1 & 0 \\ 2 & 3\end{pmatrix}$ & $\begin{pmatrix}1 & 0 \\ 3 & 6\end{pmatrix}$ & $\begin{pmatrix} 2 & 0 \\ 0 & 3\end{pmatrix}$ \\ \hline

45 & $\begin{pmatrix}1 & 0 \\ 0 & 9\end{pmatrix}$ & $\begin{pmatrix} 3 & 0 \\ 0 & 1\end{pmatrix}$ & $\begin{pmatrix}1 & 0 \\ 1 & 3\end{pmatrix}$ & $\begin{pmatrix}1 & 0 \\ 2 & 3 \end{pmatrix}$ & $\begin{pmatrix}1 & 0 \\ 3 & 9\end{pmatrix}$ & $\begin{pmatrix}1 & 0 \\ 6 & 9\end{pmatrix}$ \\ \hline

46 & $\begin{pmatrix}1 & 0 \\ 0 & 4\end{pmatrix}$ & $\begin{pmatrix}2 & 0 \\ 0 & 1\end{pmatrix}$ & $\begin{pmatrix}1 & 0 \\ 1 & 4\end{pmatrix}$ & $\begin{pmatrix}1 & 0 \\ 2 & 4\end{pmatrix}$ & $\begin{pmatrix}1 & 0 \\ 3 & 4\end{pmatrix}$ & $ $ \\ \hline

47 & $\begin{pmatrix}1 & 0 \\ 0 & 4 \end{pmatrix}$ & $\begin{pmatrix}2 & 0 \\ 0 & 1\end{pmatrix}$ & $\begin{pmatrix}1 & 0 \\ 1 & 4\end{pmatrix}$ & $\begin{pmatrix}1 & 0 \\ 3 & 4\end{pmatrix}$ & $\begin{pmatrix}1 & 0 \\ 2 & 8\end{pmatrix}$ & $\begin{pmatrix}1 & 0 \\ 6 & 8\end{pmatrix}$ \\ \hline

48 & $\begin{pmatrix}1 & 0 \\ 0 & 8\end{pmatrix}$ & $\begin{pmatrix} 2 & 0 \\ 0 & 1\end{pmatrix}$ & $\begin{pmatrix}1 & 0 \\ 1 & 4 \end{pmatrix}$ & $\begin{pmatrix}1 & 0 \\ 3 & 4\end{pmatrix}$ & $\begin{pmatrix}1 & 0 \\ 2 & 4 \end{pmatrix}$ & $\begin{pmatrix}1 & 0 \\ 4 & 8\end{pmatrix}$ \\ \hline

49 & $\begin{pmatrix}1 & 0 \\ 0 & 4\end{pmatrix}$ & $\begin{pmatrix} 2 & 0 \\ 0 & 1\end{pmatrix}$ & $\begin{pmatrix}1 & 0 \\ 1 & 8\end{pmatrix}$ & $\begin{pmatrix}1 & 0 \\ 3 & 4 \end{pmatrix}$ & $\begin{pmatrix}1 & 0 \\ 2 & 4\end{pmatrix}$ & $\begin{pmatrix}1 & 0 \\ 5 & 8\end{pmatrix}$ \\ \hline

50 & $\begin{pmatrix}1 & 0 \\ 0 & 4 \end{pmatrix}$ & $\begin{pmatrix} 2 & 0 \\ 0 & 1\end{pmatrix}$ & $\begin{pmatrix}1 & 0 \\ 1 & 4 \end{pmatrix}$ & $\begin{pmatrix}1 & 0 \\ 7 & 8 \end{pmatrix}$ & $\begin{pmatrix}1 & 0 \\ 2 & 4 \end{pmatrix}$ & $\begin{pmatrix}1 & 0 \\ 3 & 8\end{pmatrix}$ \\ \hline

51 & $\begin{pmatrix}1 & 0 \\ 0 & 3\end{pmatrix}$ & $\begin{pmatrix}3 & 0 \\ 0 & 1\end{pmatrix}$ & $\begin{pmatrix}1 & 0 \\ 1 & 3\end{pmatrix}$ & $\begin{pmatrix}1 & 0 \\ 5 & 6\end{pmatrix}$ & $\begin{pmatrix}1 & 0 \\ 2 & 6\end{pmatrix}$ & $\begin{pmatrix} 2 & 0 \\ 1 & 3\end{pmatrix}$ \\ \hline

52 & $\begin{pmatrix}1 & 0 \\ 0 & 3\end{pmatrix}$ & $\begin{pmatrix}3 & 0 \\ 0 & 1\end{pmatrix}$ & $\begin{pmatrix}1 & 0 \\ 1 & 3\end{pmatrix}$ & $\begin{pmatrix}1 & 0 \\ 8 & 9\end{pmatrix}$ & $\begin{pmatrix}1 & 0 \\ 2 & 9\end{pmatrix}$ & $\begin{pmatrix}1 & 0 \\ 5 & 9\end{pmatrix}$ \\ \hline

53 & $\begin{pmatrix}1 & 0 \\ 0 & 4\end{pmatrix}$ & $\begin{pmatrix}4 & 0 \\ 0 & 1\end{pmatrix}$ & $\begin{pmatrix}1 & 0 \\ 1 & 4\end{pmatrix}$ & $\begin{pmatrix}1 & 0 \\ 3 & 4\end{pmatrix}$ & $\begin{pmatrix}1 & 0 \\ 2 & 4\end{pmatrix}$ & $\begin{pmatrix}2 & 0 \\ 1 & 2\end{pmatrix}$ \\ \hline

54 & $\begin{pmatrix}1 & 0 \\ 0 & 5\end{pmatrix}$ & $\begin{pmatrix}5 & 0 \\ 0 & 1\end{pmatrix}$ & $\begin{pmatrix}1 & 0 \\ 1 & 5\end{pmatrix}$ & $\begin{pmatrix}1 & 0 \\ 4 & 5\end{pmatrix}$ & $\begin{pmatrix}1 & 0 \\ 2 & 5\end{pmatrix}$ & $\begin{pmatrix}1 & 0 \\ 3 & 5\end{pmatrix}$ \\ \hline 

\caption{Minimal coverings, up to permutation, of lengths between $3$ and $6$.}
\label{table1}
\end{longtable}
\end{center}

\end{document}